\documentclass[10pt]{article}
\usepackage{lmodern}
\usepackage{amsmath}
\usepackage[T1]{fontenc}
\usepackage[utf8]{inputenc}
\usepackage{authblk}
\usepackage{amsfonts}
\usepackage{graphicx}
\usepackage{rotating}
\usepackage{amssymb}
\usepackage[english]{babel}
\usepackage{color}
\usepackage{amsthm}
\usepackage{graphicx}
\usepackage{tkz-euclide}
\tikzset{elegant/.style={smooth,thick,samples=50,cyan}}
\usepackage{color,tikz}
\usetikzlibrary{patterns}
\usetikzlibrary{decorations.pathreplacing,calc}
\usepackage{rotating}
\usepackage{mathrsfs}
\usepackage{makecell}
\usepackage{microtype}
\usepackage{enumerate}
\usepackage{mathscinet}
\usepackage{array}
\usepackage{multirow}
\usepackage{booktabs}
\usepackage{enumerate}
\usepackage[cal=boondoxo,bb=ams]{mathalfa}
\usepackage{hyperref}
\usepackage{tikz,pgfplots,relsize}
\usepgfplotslibrary{fillbetween}
\usepackage{multicol}

\hypersetup{hidelinks}
\usepackage{titlesec}
\pagestyle{myheadings}

\numberwithin{equation}{section}
\newtheorem{defi}{Definition}[section]
\newtheorem{theorem}{Theorem}[section]
\newtheorem{prop}{Proposition}[section]
\newtheorem{lemma}{Lemma}[section]

\newtheorem{remark}{Remark}[section]

\newcommand{\R}{\mathbb{R}}

\newcommand{\N}{\mathbb{N}}

\DeclareMathOperator{\sinc}{sinc\,}
\DeclareMathOperator{\supp}{supp\,}
\DeclareMathOperator{\lin}{lin}
\DeclareMathOperator{\nl}{nl}

\title{$L^p-L^q$ estimates for solutions to the plate equation with mass term}
\author[1]{Alexandre Arias Junior \thanks{Alexandre A. Junior (alexandre.ariasjunior@usp.br)}}
\author[1]{Halit Sevki Aslan \thanks{Halit S. Aslan (halitsevkiaslan@gmail.com)}}
\author[2]{Antonio Lagioia \thanks{Antonio Lagioia (antonio.lagioia@uniba.it)}}
\affil[1]{Department of Computer Science and Mathematics (FFCLRP), University of S\~ao Paulo (USP),
\newline Ribeir\~ao Preto, SP, 14040-901, Brazil}
\affil[2]{Department of Mathematics, University of Bari, Via E. Orabona 4, 70125 Bari, Italy}
\author[1]{Marcelo Rempel Ebert \thanks{Marcelo R. Ebert (ebert@ffclrp.usp.br) (corresponding author)}}
\setlength{\topmargin}{-10mm}
\setlength{\textwidth}{7.2in}
\setlength{\oddsidemargin}{-8mm}
\setlength{\textheight}{9in}
\setlength{\footskip}{1in}
\date{}

\begin{document}

\maketitle

\begin{abstract}
In this paper, we study the Cauchy problem for the linear plate equation with mass term and its applications to semilinear models. For the linear problem we obtain $L^p-L^q$ estimates for the solutions in the full range $1\leq p\leq q\leq \infty$, and we show that such estimates are optimal.
In the sequel, we discuss the global in time existence of solutions to the associated semilinear problem with power nonlinearity $|u|^\alpha$. For low dimension space $n\leq 4$, and assuming $L^1$ regularity on the second datum, we were able to prove global existence for $\alpha> \max\{\alpha_c(n), \tilde\alpha_c(n)\}$ where $\alpha_c = 1+4/n$ and $\tilde \alpha_c = 2+2/n$. However, assuming initial data in $H^2(\mathbb{R}^n)\times L^2(\mathbb{R}^n)$, the presence of the mass term allows us to obtain global in time existence for all $1<\alpha \leq (n+4)/[n-4]_+$. We also show that the latter upper bound is optimal, since we prove that there exist data such that a non-existence result for local weak solutions holds when $\alpha > (n+4)/[n-4]_+$.
\medskip
	
\noindent\textbf{Keywords:} plate equation; Cauchy problem; decay estimates; optimal estimates; global existence of solution; local existence of solution. \\
	
\noindent\textbf{AMS Classification (2020)}  35L75; 35L30; 35A01; 35G25.
	
\end{abstract}

\fontsize{12}{15}
\selectfont

\section{Introduction}
\noindent

We consider the forward Cauchy problem for the plate equation with mass
\begin{equation}\label{Eq:MainProblemLineargeneral}
	\begin{cases}
	u_{tt} + \Delta^2u + u = 0, & \,\,\, (t,x)\in (0,\infty)\times \mathbb{R}^n, \\
	u(0,x)=u_0(x), \quad u_t(0,x)=u_1(x), & \,\,\, x\in \mathbb{R}^n.
	\end{cases}
\end{equation}
This  is an important model in the literature, known  as beam operator and plate operator in the case of space dimension $n = 1$ and $n = 2$, respectively. Models to analyse the vibrations of thin plates ($n=2$) given by the full von K\'arm\'an system have been studied by several authors, see~\cite{ciar, lasi2, puel}. We mention that some plate models include also a term~$-\Delta u_{tt}$ called rotational inertia. Energy  estimates for solutions, for which a regularity-loss type decay appears, have been investigated in~\cite{CD09, CDI13p, CHI16, {Ebert20}, SK10}.
 We address the interested reader to \cite{{DAE2017}, {DabbicoEbert2021JMAA}, {Karch}, {Reissig15}} for  $L^p-L^q$ estimates for solutions to structurally
damped plate equation and more general evolution equations.    Strichartz estimates and estimates in modulations spaces for the plate equation were obtained in~\cite{CZ11,CZ11b}.\\
Equations whose ``principal part'' is
$
w_{tt}+(-\Delta)^\sigma w=0,
$
like the plate equation when $\sigma=2$, are called $\sigma$-evolution equations in the sense of Petrowsky, since their symbols $\tau^2 + |\xi|^{2\sigma}$ have only pure imaginary and distinct roots $\tau=\pm i|\xi|^{\sigma}$ for all $\xi \neq 0$. The set of 1-evolution operators coincides with the set of strictly hyperbolic operators. However, several properties of the hyperbolic operators are missing when $\sigma\neq1$.

It is well-known that the Cauchy problem \eqref{Eq:MainProblemLineargeneral} is well-posed in the energy space, namely, if $(u_0, u_1)\in H^2(\mathbb{R}^n)\times L^2(\mathbb{R}^n)$, then there exists a uniquely determined solution   $u \in \mathcal{C}([0, T], H^2(\mathbb{R}^n))\cap \mathcal{C}^1([0, T], L^2(\mathbb{R}^n))$ to \eqref{Eq:MainProblemLineargeneral} for all $T>0$. Moreover, its energy is  conserved (see \cite{EbertReissigBook}), that is,
\[ E(u)(0)=E(u)(t):=\frac12\int_{\mathbb{R}^n} |u_t|^2+ |\Delta u|^2 +|u|^2  dx.\]
As it happens in several equations with oscillatory multipliers, to derive dispersive estimates to \eqref{Eq:MainProblemLineargeneral}, one has to deal with oscillatory integrals. If $u_0\neq 0$, without additional regularity on the initial data, $L^p-L^q$ estimates for solutions  hold only for $p=q=2$.

In \cite{MarshallStrauss1980} the authors considered the Cauchy problem for the Klein-Gordon equation
\[ \begin{cases}
	u_{tt} - \Delta u + u = 0, & \,\,\, (t,x)\in (0,\infty)\times \mathbb{R}^n, \\
	u(0,x)=0, \quad u_t(0,x)=u_1(x), & \,\,\, x\in \mathbb{R}^n,
	\end{cases}
\]
and proved that the operator $T_t:u_1\rightarrow u(t, \cdot)$ is bounded from $L^p$ into $L^q$ if, and only if,
\begin{equation}\label{eq:pq}
n\left(\frac1p-\frac1q\right) + (n-1)\max\left\{\left(\frac12-\frac1p\right),\left(\frac1q-\frac12\right)\right\}\leq 1,
\end{equation}
with the exception for $(p, q)=(1, 2)$ and $(p, q)=(2, \infty)$ if $n=2$. We refer to \cite{{Peral1980}, {Strichartz1970}} for $L^p-L^q$ estimates to the solutions of the Cauchy problem for the free wave equation with the first datum zero.

Our first goal in this paper is to derive $L^p-L^q$ estimates for the solutions to the Cauchy problem for the plate equation with mass term
\begin{equation}\label{Eq:MainProblemLinear}
	\begin{cases}
	u_{tt} + \Delta^2u + u = 0, & \,\,\, (t,x)\in (0,\infty)\times \mathbb{R}^n, \\
	u(0,x)=0, \quad u_t(0,x)=u_1(x), & \,\,\, x\in \mathbb{R}^n.
	\end{cases}
\end{equation}
In the massless case, if $K_{\text{pl}}(t, \cdot)$ denotes the fundamental solution to the free plate equation, then for $1\leq p\leq q\leq\infty$ the operator
\[ K_{\text{pl}}(t,\cdot)\ast: u_1\mapsto u(t,\cdot)=K_{\text{pl}}(t,\cdot)\ast u_1,\]
is bounded from $L^p(\R^n)$ into $L^q(\R^n)$ if, and only if, $d_{\text{pl}}(p,q)\leq 1$ with $p>1$ and $q<\infty$ or if, and only if, $d_{\text{pl}}(p,q)<1$ with $p=1$ and $q=\infty$ (see Theorem 4.1 in ~\cite{Miyachi1981}), where
\begin{equation} \label{Eq:d-plate}
d_{\text{pl}}(p,q)=\frac n2\left(\frac1p-\frac1q\right) + n\max\left\{\left(\frac12-\frac1p\right),\left(\frac1q-\frac12\right)\right\}.
\end{equation}
In this range, by homogeneity, it follows
\[\|u(t,\cdot)\|_{L^q} \lesssim t^{1-\frac{n}{2}\left( \frac{1}{p}-\frac{1}{q} \right)}\|u_1\|_{L^p}, \quad t>0.\]
One of the reasons  that $n$ in $d_{\text{pl}}(p,q)$  replaces $n-1$ in~\eqref{eq:pq} is due to the fact that   the Hessian of $|\xi|^\sigma$, with $\sigma>0$, is singular if, and only if, $\sigma=1$ (see, for instance, \cite{Sjostrand}). Due to this, without difficulties  the obtained results in this paper may be extended to the Cauchy problem for the equation $ u_{tt} + (-\Delta)^\sigma u + u = 0$, for $\sigma>0$, $\sigma\neq 1$.

As in the massless case, the condition $d_{\text{pl}}(p,q)\leq1$ come into play to derive $L^p-L^q$  estimates for solutions at high frequencies in the phase space,  but since for \eqref{Eq:MainProblemLinear} we no longer have the homogeneity property, a challenging problem is to understand the influence of the mass term on the long-time behaviour on the solutions.

A partial answer to this question was obtained 
in the dual line, namely, the following estimate holds for solutions to \eqref{Eq:MainProblemLinear} for all $2\leq q\leq  2^{**} := \frac{2n}{[n-4]_{+}}$  (see \cite{Levandosky1998}):
\begin{equation*}
\|u(t,\cdot)\|_{L^q} \lesssim \|u_1\|_{L^{q^{'}}}\begin{cases}t^{-\frac{n}{4}+\frac{n}{2q}}  & t\geq 1, \\
t^{1-\frac{n}{2}\left( 1-\frac{2}{q} \right)} & t \in (0,1),
\end{cases}
\end{equation*}
where $q'$ denotes the conjugate exponent of $q$. Besides, when $n \geq 5$ the above estimate was extended for $2\leq q <\infty$ (see \cite{Guo2008}).
Using Fourier analysis, stationary phase method and the techniques from WKB analysis (see \cite{DabbicoEbert2021JMAA}, \cite{EbertReissigBook}), in Theorem \ref{Thm:1} we extended  the previous results for $1\leq p\leq q\leq \infty$ out of the dual line, provided that the condition $d_{\text{pl}}(p,q)\leq 1$ is satisfied. Our approach is different from \cite{MarshallStrauss1980} and produce sharp estimates
 to \eqref{Eq:MainProblemLinear}, as it happens for the free plate equation \cite{Ebert19} and to  the structurally
damped plate equation \cite{DabbicoEbert2021JMAA}. However,  it  is not suitable to get sharp estimates to the free wave equation and to the Klein-Gordon equation.  Nevertheless, one may combine the approach used in this paper together with the obtained behaviour for wave type kernels around  the unit sphere (see \cite{Sjostrand}, Lemma 5.1) in order to produce sharp estimates.

The second purpose of this paper is to study the long time behaviour
 of  global (in time) solutions for the semi-linear problem
\begin{equation}\label{Eq:MainProblem}
	\begin{cases}
	u_{tt} + \Delta^2u + u =f(u) , & \,\,\, (t,x)\in (0,\infty)\times \mathbb{R}^n, \\
	u(0,x)=u_0(x), \quad u_t(0,x)=u_1(x), & \,\,\, x\in \mathbb{R}^n,
	\end{cases}
\end{equation}
where $f(u)=|u|^\alpha$, with $\alpha>1$. The local well-posedness for $1<\alpha< \frac{n+4}{[n-4]_+}$ and scattering with small data for $1+\frac8{n}<\alpha< \frac{n+4}{{[n-4]}_+}$ to \eqref{Eq:MainProblem} were studied in the energy space $\mathcal{C}([0, T), H^2(\mathbb{R}^n))\cap  \mathcal{C}^1([0, T), L^2(\mathbb{R}^n))$ in \cite{Levandosky1998}.  We refer to the survey \cite{HebeyPausader} and references therein for further results about  well-posedness, blow-up in
finite time, long time existence, and the existence of uniform bounds for global
solutions to \eqref{Eq:MainProblem}.

 The nonlinearity in \eqref{Eq:MainProblem} may have several shapes, for instance, the derived  results in this paper also hold if  $f(u)=|u|^{\alpha-1}u$ or if
$f$ is locally Lipschitz-continuous satisfying
\[f(0)=0, \quad |f(u)-f(v)| \leq C |u-v|(|f(u)|^{\alpha-1}+|f(v)|^{\alpha-1}),\]
for some $\alpha>1$. Mainly, we are interested in  nonlinearities that can not be included in some energy, so it may  create blow-up in finite time. These kind of non-linearities are known in the literature as of source type.
Semi-linear plate equations with source nonlinearity,  with or without damping,    have been investigated by many authors, see for instance \cite{ DAE2017,  DAE22, Ebert19, {Ebert20}, Reissig15} and references therein. For the semi-linear model \eqref{Eq:MainProblem} with zero  mass, one may have blow-up for small power
nonlinearities, even assuming small data in the energy space.

\medskip

The outline of our study of  the Cauchy problem \eqref{Eq:MainProblem} is the following:
\begin{itemize}
\item We apply the derived estimates in Theorem \ref{Thm:1} to    discuss  the local (in time) existence of Sobolev solutions, even for large data $(u_0, u_1)\in H^2(\mathbb{R}^n)\times L^2(\mathbb{R}^n)$ (see  Theorem \ref{local}).
\item In Theorem \ref{non-existence_local}, under the assumptions $u_0 \equiv 0$ and $u_1\in L^m(\mathbb{R}^n)$, with $m\in [1, 2]$, we prove a non-existence result for local weak solution to \eqref{Eq:MainProblem} when $n > 2m$ and $\alpha > \frac{n+2m}{n-2m}$. 
\item In  Theorems \ref{Thm:GlobalExistence1} and \ref{Thm:GlobalExistence2} the global (in time) existence of solutions are obtained under the assumption of small initial datum $u_1\in L^1(\mathbb{R}^n)$ for  $n\leq 4$ and $\alpha> \max\{\alpha_c(n), \tilde\alpha_c(n)\}$, where such exponents are defined in \eqref{eq:crit exp}.
\end{itemize}

\medskip

\noindent\textbf{Notations:}
\begin{itemize}
\item $f\lesssim g$ means that there exists a positive constant $C$ fulfilling $f\leqslant Cg$, which may be changed in different lines, analogously, for $f\gtrsim g$. Furthermore, the asymptotic relation  $f\simeq  g$ holds when $g\lesssim f\lesssim g$. 

\item $[a]_+$ and $[a]_-$ denote $\max\{a,0\}$ and $\max\{-a,0\}$, respectively. 

\item $p'$ stands for the conjugate exponent of a given $p \in [1,\infty]$.

\item $L_p^q = L_p^q(\mathbb{R}^n)$ denotes the space of tempered distributions $T\in \mathcal{S}'(\R^n)$ such that $T\ast f\in L^q(\R^n)$ for any $f\in \mathcal{S}(\R^n)$, where $\mathcal{S}(\R^n)$ stands for the space of rapidly decreasing Schwartz functions, and
\[ \|T\ast f\|_{L^q} \leq C\|f\|_{L^p} \]
for all $f\in \mathcal{S}(\R^n)$ with a constant $C>0$, which is independent of $f$. In this case, the operator $T\ast$ is extended by density to the space $L^p(\R^n)$.

\item $M_p^q = M_p^q(\mathbb{R}^n)$ stands for the set of Fourier transforms $\hat{T}$ of distributions $T\in L_p^q$, equipped by the norm
\[ \|m\|_{M_p^q} := \sup\big\{ \big\| \mathcal{F}^{-1}\big( m\mathcal{F}(f) \big) \big\|_{L^q}: f\in\mathcal{S}(\R^n), \|f\|_{L^p} = 1 \big\}, \]
and we set $M_p = M_p^p$. The elements in $ M_p^q$ are called multipliers of type $(p,q)$.

\item We fix a nonnegative function $\phi\in \mathcal{C}^\infty(\R^n)$ with compact support in $\{\xi\in\mathbb{R}^n: 2^{-1}\leq |\xi|\leq 2\}$ such that
\[ \sum_{k=-\infty}^{+\infty}\phi_k(\xi) =1, \qquad \text{where} \,\,\, \phi_k(\xi) := \phi(2^{-k}\xi). \]
For any $p\in[1,\infty]$ and $q\in[1,\infty)$ the Besov space is defined as follows:
\[ B_{p,q}^0(\R^n) = \big\{ f\in\mathcal{S}'(\R^n): \forall k\in \mathbb{Z}, \mathcal{F}^{-1}(\phi_k\hat{f})\in L^p(\R^n), \,\,\, \|f\|_{B_{p,q}^0}<\infty \big\}, \]
where
\[ \|f\|_{B_{p,q}^0} = \|\mathcal{F}^{-1}(\phi_k\hat{f})\|_{\ell^q(L^p)} = \left( \sum_{k=-\infty}^{+\infty}\|\mathcal{F}^{-1}(\phi_k\hat{f})\|_{L^p}^q \right)^{\frac{1}{q}}. \]
\end{itemize}



\section{Main results}
\noindent

The main results of this paper read as follows.

\subsection{Estimates for the linear Cauchy problem}
\noindent

\begin{theorem} \label{Thm:1}
Let $u_1\in L^p(\mathbb{R}^n)$. Then, the solution $u$ to the Cauchy problem \eqref{Eq:MainProblemLinear}  satisfies the following estimates:
\begin{equation}\label{linearestimates}
\|u(t,\cdot)\|_{L^q} \lesssim \|u_1\|_{L^p}
\begin{cases}
t^{-\frac{n}{4}\left( \frac{1}{p}-\frac{1}{q} -\beta(p,q) \right)}, & t\geq 1,\\
t^{1-\frac{n}{2}\left( \frac{1}{p}-\frac{1}{q} \right)}, & t \in (0,1),
\end{cases}
\end{equation}
for all $1\leq p\leq q\leq \infty$ such that $d_{\text{pl}}(p,q) < 1$ and for all $1< p\leq 2 \leq  q< \infty$ such that $d_{\text{pl}}(p,q)=1$, where
$d_{\text{pl}}(p,q)$ is given by \eqref{Eq:d-plate} and
\[\beta(p, q):= \begin{cases}
    \quad\left[\frac{1}{p}+\frac{3}{q}-2\right]_{+}, & \text{ if } \frac1p+\frac1q\geq 1, \\
    -\,\left[\frac{3}{p}+\frac{1}{q}-2\right]_{-}, & \text{ if } \frac1p+\frac1q\leq 1,
\end{cases} \]
provided that in the case $(p,q)=(1,3)$ or $(p,q)=(3/2,\infty)$ with $d_{\text{pl}}(p,q) < 1$, the estimate \eqref{linearestimates} is replaced by
\[ \|u(t,\cdot)\|_{L^q}\leq C  t^{-\frac n4 \left(\frac1p-\frac1q\right)} (\log t)^\frac12\|u_1\|_{L^p}\quad  \text{for any} \quad t \geq 1. \]
%
\end{theorem}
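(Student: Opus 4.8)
The plan is to analyze the Fourier multiplier $\widehat{u}(t,\xi) = \sinc(t\sqrt{|\xi|^4+1})\,\widehat{u_1}(\xi)$, where $\sinc(r) = \sin(r)/r$, by splitting the phase space into a low-frequency zone $\{|\xi|\le \varepsilon\}$, a middle (bounded) zone $\{\varepsilon \le |\xi|\le N\}$, and a high-frequency zone $\{|\xi|\ge N\}$, using the cutoff functions built from $\phi$. In the high-frequency zone, since the Hessian of $|\xi|^2$ (the symbol of $\Delta^2$ written as $(-\Delta)^2$, so the phase is $|\xi|^2$ after rescaling — more precisely the relevant phase $\phi(\xi)=\sqrt{|\xi|^4+1}\approx |\xi|^2$) is nondegenerate, the oscillatory integral $\mathcal{F}^{-1}(\chi_{\text{high}}(\xi) e^{\pm it\sqrt{|\xi|^4+1}})$ admits, via stationary phase / the Miyachi-type estimates already recorded for the free plate equation (Theorem 4.1 in~\cite{Miyachi1981}), the kernel bounds producing exactly the exponent $d_{\text{pl}}(p,q)\le 1$ and the time decay $t^{1-\frac n2(\frac1p-\frac1q)}$; one divides by the extra factor $(|\xi|^4+1)^{1/2}\approx|\xi|^2$ coming from $\sinc$, which only improves high-frequency behavior. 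This is where the restriction $d_{\text{pl}}(p,q)<1$ (or $=1$ with $1<p\le 2\le q<\infty$, i.e., away from the endpoints where the Hardy–Littlewood–Sobolev / Riesz-type borderline fails) enters, and it reproduces the small-time estimate by homogeneity considerations on the dyadic pieces $|\xi|\simeq 2^k$, $k\ge 0$, summed in $k$.

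For the middle zone, $\sqrt{|\xi|^4+1}$ is smooth with no stationary points of the phase $x\cdot\xi \pm t\sqrt{|\xi|^4+1}$ once $|x|$ is large relative to $t$, so non-stationary phase gives rapid decay there; on the compact piece the multiplier is a nice $C^\infty_c$ symbol, hence bounded on every $L^p\to L^q$ with $p\le q$ with at worst $O(1)$ constants for bounded $t$ and, after extracting the oscillation, decay $t^{-N}$ for any $N$ for $t\ge 1$ — so the middle zone is harmless and contributes lower-order terms.

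The genuinely delicate part, and the main obstacle, is the low-frequency zone for $t\ge 1$, because here the mass term dominates: $\sqrt{|\xi|^4+1} = 1 + \tfrac12|\xi|^4 + O(|\xi|^8)$, so the phase behaves like $t + \tfrac t2|\xi|^4$, i.e., after factoring out the harmless unimodular $e^{\pm it}$ we are left with a Fourier integral operator with phase $\sim t|\xi|^4$ — a genuinely degenerate (at $\xi=0$) fourth-order phase on a neighborhood of the origin. One must perform a Littlewood–Paley decomposition $|\xi|\simeq 2^k$ with $k\to-\infty$, rescale $\xi = 2^k\eta$, $|\eta|\simeq 1$, so that the oscillation becomes $2^{4k}t|\eta|^4$; the analysis then depends on whether $2^{4k}t \lesssim 1$ (no oscillation: the piece is an $L^1$-bounded bump function of scale $2^k$, contributing like a heat-type kernel $t^{-n/4}$-type decay when $p$ and $q$ straddle via Young's inequality) or $2^{4k}t\gtrsim 1$ (stationary phase on the unit annulus in $\eta$, gaining $(2^{4k}t)^{-n/2}$ but with an $L^\infty\to L^1$-type loss). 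Summing the two regimes in $k$ is exactly what produces the piecewise exponent $\beta(p,q)$ with its split according to $\frac1p+\frac1q\gtrless 1$, and the borderline cases $(p,q)=(1,3)$ and $(3/2,\infty)$ — where the geometric series in $k$ degenerates into a harmonically divergent sum truncated at $2^{4k}t\simeq 1$, giving $\log t$ terms — arise precisely when the summation exponent hits zero. I would carry this out by (i) reducing to the low-frequency multiplier estimate $\|\mathcal{F}^{-1}(\psi(\xi)\,\mathrm{sinc}(t\sqrt{|\xi|^4+1})\,\widehat{f})\|_{L^q}\lesssim (\text{decay})\|f\|_{L^p}$ via the $M_p^q$-norm machinery recalled in the Notations, (ii) interpolating between the two extreme kernel estimates $L^1\to L^1$ (Young, giving $t^0$ up to the scale cutoff) and $L^1\to L^\infty$ (stationary phase, giving the optimal $t^{-n/4}\cdot(\text{oscillatory factor})$), then (iii) bookkeeping the dyadic sum to read off $\beta(p,q)$ and isolate the logarithmic endpoints. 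The small-time ($t\in(0,1)$) bound is uniform and follows by the same decomposition without the mass subtlety, since for bounded $t$ the phase never leaves the regime where the free-plate scaling $t^{1-\frac n2(\frac1p-\frac1q)}$ applies.
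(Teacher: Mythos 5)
Your plan is, at the structural level, the same as the paper's: rescale the multiplier to unit scale ($\xi\mapsto t^{-1/2}\eta$ at high frequencies, $\xi\mapsto t^{-1/4}\eta$ at low frequencies), decompose dyadically, apply a Littman-type stationary phase estimate on each annulus, interpolate the resulting $M_p^q$ bounds, and then read off the exponent $\beta(p,q)$ (and the logarithm at the two borderline pairs) from the geometric sum over scales. That is exactly how Lemmas \ref{LemmaLargeFreq} and \ref{LemmaSmallFreq1} are proved. A few of your details, however, would need to be repaired before this becomes a proof.

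First, your claimed treatment of the middle zone $\varepsilon\le|\xi|\le N$ is wrong: you assert that non-stationary phase gives $t^{-N}$ decay for any $N$, but the phase $x\cdot\xi\pm t\sqrt{1+|\xi|^4}$ is stationary whenever $|x|\simeq t$, and stationary phase only gives an $L^1\to L^\infty$ bound of order $t^{-n/2}$ there (and an $L^1\to L^1$ bound that actually grows). This rate still turns out to be dominated by the low-frequency contribution, so your three-way split can be made to work, but not for the reason you give; the paper sidesteps the issue by splitting only into $\widehat K_0$ and $\widehat K_\infty$ and letting the rescalings sweep this region into the dyadic sums. Second, interpolating only between $L^1\to L^1$ and $L^1\to L^\infty$ gives the $L^1\to L^q$ row but not the full $(p,q)$ range; as in the paper you need three anchors ($M_1^\infty$ by Littman, $M_2^2$ trivially, $M_1^1$ by Bernstein), and the case $d_{\mathrm{pl}}(p,q)=1$ with $1<p\le 2\le q<\infty$ is handled via Besov embeddings $L^p\hookrightarrow B^0_{p,2}$, $B^0_{q,2}\hookrightarrow L^q$, not by an HLS-type borderline argument. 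Third, citing Miyachi's Theorem 4.1 for the high-frequency part is not quite legitimate: after the rescaling $\xi=t^{-1/2}\eta$ the multiplier $m_\infty(t,\eta)$ still depends on $t$ through $\sqrt{1+(t/|\eta|^2)^2}$, so one must verify (as Propositions \ref{Prop1:LargeFreq}--\ref{Prop2:Hessian} do) that the derivative bounds and the Hessian lower bound are uniform in $t$ before stationary phase can be applied with a $t$-independent constant. Your Taylor heuristic $\sqrt{1+|\xi|^4}\approx 1+\tfrac12|\xi|^4$ at low frequencies conveys the right intuition, but controlling the remainder uniformly over $|\xi|\le 1$ and $t\ge 1$ is exactly what the rescaling makes clean; the paper's choice to keep the exact phase $\omega(t,\eta)=t\sqrt{1+t^{-1}|\eta|^4}$ avoids that bookkeeping.
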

\begin{figure}[h]
\centering
  \begin{tikzpicture}[domain=0:1]
   \draw[->] (0,0) -- (5.6,0) node[right] {$1/p$};
   \draw[-] (0,0) -- (0,5.6);
   \node[below left] at (0, 0) {$0$};
   \draw[->] (0,5.6) -- (0,5.6) node[above] {$1/q$};
\path[fill=green!10] (3.7,0.02)--(5,0.02)--(2.5,2.5)--cycle;
\path[fill=green!10] (5,0)--(5,1)--(2.5,2.5)--cycle;
\path[fill=red!30] (1.02,1.01)--(2.5,2.5)--(3.7,0.02)--cycle;
\path[fill=red!30] (1.02,1.02)--(1.7,0.02)--(3.71,0.02)--cycle;
\path[fill=red!10] (2.5,2.5)--(3.7,3.7)--(5,3)--cycle;
\path[fill=red!10] (2.5,2.5)--(5,3)--(5,1)--cycle;
   \draw[color=black, domain=0:5] plot (\x,\x);
   \draw[dashed, color=black]node[left] at (0, 5){\small $1$};
   \draw[-] (3.7,3.7) -- (5,3);
   \draw[->] (4.2,3.5) -- (5.3,3.25);
   \draw[color=black] node[right] at (5.3,3.25) {\small $d_{\text{pl}}(p,q)=1$};
   \draw[-] (2.5,2.5) -- (5,1);
   \draw[->] (4.3,1.5) -- (5.3,1.25);
   \draw[color=black] node[right] at (5.3,1.25) {\small $\frac{1}{p}+\frac{3}{q}-2=0$};
   \draw[-] (2.5,2.5) -- (5,0);
   \draw[dashed, color=black]node[below] at (5, 0){\small $1$};
   \draw[-] (2.5,2.5) -- (3.7,0);
   \draw[dashed, color=black]node[below] at (3.7, 0){$\frac{2}{3}$};
   \draw[->] (3.5,0.8) -- (5.3,0.55);
   \draw[color=black] node[right] at (5.3,0.55) {\small $\frac{3}{p}+\frac{1}{q}-2=0$};
   \draw[-] (1.02,1.01) -- (1.7,0);
   \draw[dashed, color=black]node[below] at (1.7, 0){\small $1-\frac{2}{n}$};
   \draw[->] (1.3,0.5) -- (1,-0.4);
   \draw[color=black] node[below] at (1,-0.4) {\small $d_{\text{pl}}(p,q)=1$};
   \draw[color=black, domain=0:5] plot (\x,5);
   \draw[color=black, domain=0:5] plot (5,\x);
   \draw[dashed, color=black]node[left] at (0, 2.5){$\frac{1}{2}$};
   \draw[dashed](0,2.5)--(5,2.5);
   \draw[dashed, color=black]node[left] at (0, 3){$\frac{2}{n}$};
   \draw[dashed](0,3)--(5,3);
   \draw[dashed, color=black]node[left] at (0, 1){$\frac{1}{3}$};
   \draw[dashed](0,1)--(5,1);
   \draw [draw=black]      (8.5,1.5)   rectangle (12,3.7);
   tikz{\fill[fill=red!10]             (9,3)   rectangle (10,3.3);}
   \draw[color=black] node[right] at (10,3.3) {\small  $t^{-\frac n2+ \frac nq}$};
   tikz{\fill[fill=green!10]            (9,2.5)   rectangle (10,2.8);}
   \draw[color=black] node[right] at (10,2.8) {\small  $t^{-\frac n4\left(\frac1p-\frac1q\right)}$};
   tikz{\fill[fill=red!30]            (9,2)   rectangle (10,2.3);}
   \draw[color=black] node[right] at (10,2.2) {\small  $t^{\frac n2- \frac np}$};
\end{tikzpicture}
\caption{Description of the results in the $1/p-1/q$ plane with the optimal long-time decay estimates}
\end{figure}
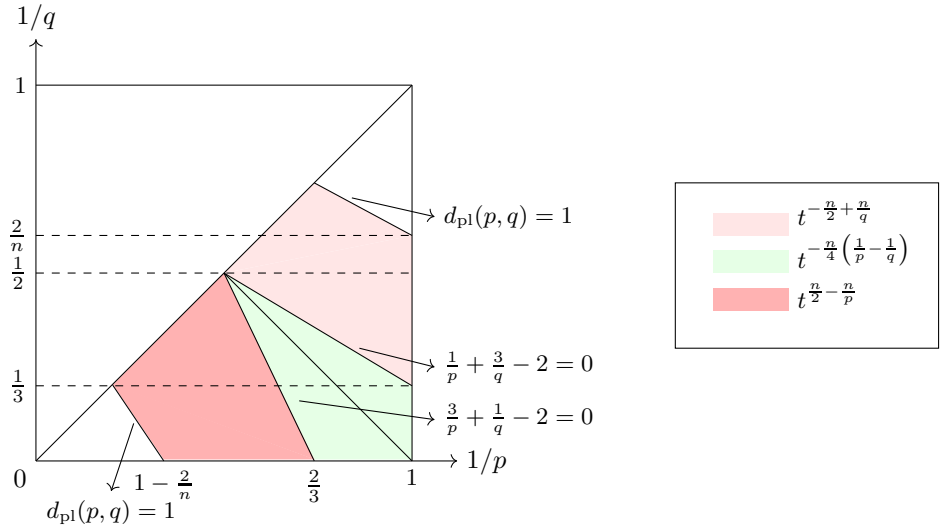
\begin{remark}
We notice that the  estimates in Theorem \ref{Thm:1} are consistent with the obtained ones in \cite{Guo2008, Levandosky1998} on the dual line,
namely, for $p \in [1, 2]$,
\begin{equation*}
\|u(t,\cdot)\|_{L^{p'}} \lesssim \|u_1\|_{L^p}\begin{cases}t^{\frac{n}{4}-\frac{n}{2p}},  & t\geq 1,\\
t^{1-\frac{n}{2}\left( 1-\frac{2}{p'} \right)}, & t \in (0,1).
\end{cases}
\end{equation*}
\end{remark}
\begin{remark}\label{remark_L1_Lq_estimates}
Assuming the initial datum $u_1 \in L^1$, from Theorem \ref{Thm:1} we get the following estimates  for solutions to \eqref{Eq:MainProblemLinear}:
\begin{equation}{\label{eq:L^1-L^p estimates}}
\|u(t,\cdot)\|_{L^q} \lesssim \|u_1\|_{L^1}\begin{cases}t^{-\frac{n}{4}\left(1-\frac{1}{q} \right)}, & q>3,\,t\geq 1,\\
t^{-\frac{n}{2}+\frac{n}{q}}\, (\log t)^\gamma, & q \leq 3,\,t\geq 1,\\
t^{1-\frac{n}{2}\left(1-\frac{1}{q} \right)}\,, & t \in (0,1),
\end{cases}
\end{equation}
provided that $d_{\text{pl}}(1, q)=\frac{n}{2q}<1$, where $\gamma=\gamma(p,q)$ is equal to $1/2$ if $(p,q)=(1,3)$ and zero otherwise.
\end{remark}
\begin{remark}
Let us consider $u_1 \in L^1$. The condition to avoid the singularity when $t\to 0$ in \eqref{eq:L^1-L^p estimates} is 
\[ 1-\frac{n}{2}\left(1-\frac{1}{q} \right)\geq 0,\]
for $q \in [2,\infty]$. The latter condition is satisfied if, and only if,
\begin{equation}\label{eq:nonsing_cond}
    \frac 1q \geq \frac{n-2}n.
\end{equation}
Hence, if $n\leq 2$ and $q \in [1,\infty]$ or
\[ n\geq 3 \quad \text{and} \quad q\leq \frac n{n-2},\]
we get non-singular estimates. In particular, this means that we may work without asking additional regularity for $n\leq 2$ (that is the case of $\alpha_c$), and for $n=3$ when $q\leq 3$. However, if \eqref{eq:nonsing_cond} is not satisfied, we have to ask additional regularity $u_1 \in L^1 \cap L^r$, where $r$ is such that $d_{\text{pl}}(r,q)\leq 1$ and $1-\frac n2\left(\frac 1r - \frac 1q \right)\geq 0$ for any $q \in [2,\infty].$ Choosing $r=2$, we get that $d_{\text{pl}}(2,q)\leq 1$ if and only if $2\leq q\leq \frac{2n}{[n-4]_+}$, with the exception  $q \neq \infty$ if  $n=4$, and the estimates are non-singular for any $q\geq 2$.
\end{remark}

We close this subsection discussing the optimality of the derived estimates in Theorem \ref{Thm:1}. In \cite{Levandosky1998} (see Lemma 2.10), the author proved that
\[ \|K(t,\cdot)\|_{M_p^q} \geq C t^{1-\frac{n}{2}\left( \frac{1}{p}-\frac{1}{q} \right)}, \quad t \in (0,1), \]
and
\[ \|K(t_k, \cdot)\|_{M_p^q} \geq C t_{k}^{-\frac{n}{4}\left( \frac{1}{p}-\frac{1}{q} \right)}, \]
for some sequence $t_k \to +\infty$, where $K$ is defined in \eqref{Eq:kernel-K}. So, it remains only to check the optimality
of
\begin{equation}\label{eq_opti_1}
	\|K(t,\cdot)\|_{M_p^q} \leq t^{-\frac{n}{2}+\frac{n}{q}}, \quad \frac{1}{p}+\frac{1}{q} \geq 1\,\,\,\,\, \text{and} \,\,\,\,\, \frac{1}{p} + \frac{3}{q} \geq 2,
\end{equation}
and
\begin{equation}\label{eq_opti_2}
	\|K(t,\cdot)\|_{M_p^q} \leq t^{\frac{n}{2}-\frac{n}{p}}, \quad \frac{1}{p}+\frac{1}{q} \leq 1\,\,\,\,\, \text{and} \,\,\,\,\, \frac{3}{p} + \frac{1}{q} \geq 2.
\end{equation}
We are going to study only \eqref{eq_opti_1}, because \eqref{eq_opti_2} follows from \eqref{eq_opti_1} by duality. To prove that \eqref{eq_opti_1} is optimal we need the following result, which was inspired by \cite{MarshallStrauss1980}, Lemma 8.
\begin{lemma}\label{Lemma_optimality}
    Let $1 \leq p < \infty$. There exist $f \in L^p$, $b > a > 0$ and $C > 0$ such that there exists $t_k \to \infty$ satisfying
    \begin{equation}\label{eq_estimate_from_below}
    |K(t_k, x) \ast f (x)| \geq C\,t^{-\frac n2}_{k},
    \end{equation}
    for all $x$ such that $at_k < |x| < bt_k$.
\end{lemma}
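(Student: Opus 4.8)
The plan is to reduce the statement to a classical stationary-phase lower bound for oscillatory integrals of the form $\int e^{i(x\cdot\xi + t\phi(\xi))}\,d\xi$, where $\phi(\xi)=\sqrt{1+|\xi|^4}$ is the phase associated with the multiplier $K(t,\xi)$ (which behaves like $e^{\pm it\sqrt{1+|\xi|^4}}$ times the smooth amplitude $(1+|\xi|^4)^{-1/2}\sin(t\sqrt{1+|\xi|^4})$ from~\eqref{Eq:kernel-K}). The heuristic is that the group velocity $\nabla_\xi\phi(\xi)$ sweeps out an annulus of radii comparable to $a$ and $b$ as $|\xi|$ ranges over a fixed compact set bounded away from $0$ and $\infty$, and at points $x/t$ inside that annulus the phase $x\cdot\xi + t\phi(\xi)$ has a nondegenerate stationary point in $\xi$, so the integral is of size $\simeq t^{-n/2}$. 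The function $f$ will be chosen (as in~\cite{MarshallStrauss1980}, Lemma~8) so that $\hat f$ is a fixed smooth bump supported where the Hessian of $\phi$ is invertible; then $K(t,\cdot)\ast f = \mathcal{F}^{-1}(K(t,\xi)\hat f(\xi))$ is exactly such an oscillatory integral with a fixed smooth amplitude.

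First I would fix a small ball $B\subset\{\xi: R_1\le|\xi|\le R_2\}$ on which $\phi$ is smooth and $\det D^2\phi(\xi)\neq 0$ (this is where $\sigma\neq 1$ is used — the Hessian of $|\xi|^\sigma$ is nonsingular for $\sigma=2$, exactly the point stressed after~\eqref{Eq:d-plate}), let $\hat f\in C_c^\infty$ be a bump supported in $B$ with $f\in L^p$ automatically (Schwartz), and set $a,b$ to be, say, slightly less than $\inf_{B}|\nabla\phi|$ and slightly more than $\sup_B|\nabla\phi|$ after shrinking $B$ so that the Gauss map $\xi\mapsto\nabla\phi(\xi)$ is a diffeomorphism onto its image and that image contains a full annulus $\{a<|y|<b\}$. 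Next, for $x$ with $at_k<|x|<bt_k$ write $y=x/t_k$, so $|y|\in(a,b)$; the phase $\Psi(\xi)=y\cdot\xi+\phi(\xi)$ has $\nabla\Psi(\xi_*)=0$ for a unique $\xi_*=\xi_*(y)\in B$ with nondegenerate Hessian. Then I would invoke the standard stationary phase expansion (van der Corput / Hörmander, e.g.\ \cite{EbertReissigBook} or \cite{Sjostrand}):
\[
K(t_k,\cdot)\ast f(x) = \int e^{it_k\Psi(\xi)}\,\frac{\sin(t_k\phi(\xi))}{\phi(\xi)}\,\hat f(\xi)\,d\xi \;=\; c\,t_k^{-n/2}\,e^{it_k\Psi(\xi_*)}\,\frac{a_0(\xi_*)}{\sqrt{|\det D^2\phi(\xi_*)|}} + O(t_k^{-n/2-1}),
\]
uniformly in $y$ ranging over the compact annulus, where $a_0(\xi_*)\neq 0$ is the leading amplitude. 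The only subtlety is that $\sin(t_k\phi(\xi)) = \tfrac1{2i}(e^{it_k\phi}-e^{-it_k\phi})$ contributes two oscillatory integrals: one has the stationary point $\xi_*$ just described, and the other, with phase $y\cdot\xi-\phi(\xi)$, has no stationary point in $B$ when $|y|>a$ is chosen larger than $\sup_B|\nabla\phi|$ — wait, one must be careful with signs; the correct choice is to pick the branch so that exactly one of the two terms is stationary and the other is $O(t_k^{-N})$ by nonstationary phase.

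The leading term has modulus $\simeq t_k^{-n/2}$ with a constant uniform over the annulus, but it also carries the oscillatory factor $e^{it_k\Psi(\xi_*(y))}$, which does \emph{not} have constant modulus — that's fine, since~\eqref{eq_estimate_from_below} only asks for a lower bound on $|K(t_k,x)\ast f(x)|$, and $|e^{it_k\Psi}|=1$. Thus $|K(t_k,\cdot)\ast f(x)| \ge c\,t_k^{-n/2} - C\,t_k^{-n/2-1} \ge \tfrac{c}{2}\,t_k^{-n/2}$ for $k$ large, and since we only need this along \emph{some} sequence $t_k\to\infty$ we may simply take $t_k=k$. The main obstacle — and the place where care is genuinely required — is the uniformity of the stationary-phase remainder over the full annulus $\{at_k<|x|<bt_k\}$ as $t_k\to\infty$: one must check that $\xi_*(y)$ stays in a fixed compact subset of $B$ where $D^2\phi$ is uniformly invertible and all derivatives of the amplitude are bounded, so that the implied constants in the asymptotic expansion do not degenerate; this is exactly the role of shrinking $B$ at the outset so that $\nabla\phi|_B$ is a diffeomorphism onto a neighborhood of the closed annulus. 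Everything else is a routine application of the method of stationary phase, following the template of~\cite{MarshallStrauss1980}, Lemma~8.
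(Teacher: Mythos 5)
Your approach is genuinely different from the paper's. The paper takes $\widehat{f}$ \emph{radial}, passes via the Bessel-function formula for the radial Fourier transform to a one-dimensional oscillatory integral in $r=|\xi|$, and then does a careful 1D stationary-phase analysis around the unique critical point $r_0$ of the reduced phase. You instead keep the $n$-dimensional oscillatory integral and apply the standard multidimensional stationary-phase expansion, using the nondegeneracy of the Hessian of $\phi(\xi)=\sqrt{1+|\xi|^4}$ for $\xi\neq 0$. This choice has a real payoff: with $\widehat{f}$ a bump supported in a small region (not radially symmetric), for a given $x$ only one of the two phases $y\cdot\xi\pm\phi(\xi)$, $y=x/t$, is stationary in $\supp\widehat{f}$, so the leading term is a single complex exponential of modulus one and you can indeed take $t_k=k$. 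By contrast, the paper's radial $\widehat{f}$ makes both phases stationary, the leading contributions combine into a real $\sin(t\,h(r_0)+\pi/4)$ that can vanish, and the paper is forced to pass to a subsequence $t_k$; your setup neatly sidesteps that.

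There is, however, a concrete gap. You claim one may shrink $B$ so that the Gauss map $\xi\mapsto\nabla\phi(\xi)$ is a diffeomorphism onto a set \emph{containing a full annulus} $\{a<|y|<b\}$. This is impossible. Since $\phi$ is radial, $\nabla\phi(\xi)=g'(|\xi|)\,\xi/|\xi|$ with $g(r)=\sqrt{1+r^4}$, so the image of a small ball $B$ around $\xi_0=Re_1$ is a small open set near $g'(R)e_1$: a topological ball localized in a narrow cone of directions around $e_1$, which cannot contain any annulus $\{a<|y|<b\}$ with $a>0$ because the annulus meets every direction. Thus, as written, your argument only gives the lower bound $|K(t_k,\cdot)\ast f(x)|\gtrsim t_k^{-n/2}$ for $x$ in a fixed cone intersected with $\{at_k<|x|<bt_k\}$ (and, depending on the direction, only one of the two signs of $\pm\phi$ is the stationary one), not on the whole annulus asserted in \eqref{eq_estimate_from_below}. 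This is fixable — the cone-annulus already has measure $\simeq t_k^n$, so the subsequent $L^q$ lower bound goes through unchanged — but you should either restate the lemma to conclude the bound only on such a truncated cone, or take $\widehat{f}$ supported on a full spherical shell, in which case you reintroduce the second stationary point and with it the cancellation problem that forces the paper to choose a sequence $t_k$. As written, the topological claim is simply false and the full-annulus conclusion is not established.
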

The proof of Lemma \ref{Lemma_optimality} is quite technical and long, for this reason, we decide to postpone it to an appendix at the end of the paper (see  Appendix \ref{optimality}).\\

Now, take $f \in L^p$, $0 < a < b$ and $t_k$ as in Lemma \ref{Lemma_optimality}. Assume in addition that $\|f\|_{L^p} = 1$. Then we have
    \begin{align*}
        \|K(t_k,x) \ast f(x)\|_{L^q} \geq \left( \int_{at_k<|x|<bt_k} |K(t_k,x) \ast f(x)|^q\,dx \right)^\frac 1q \gtrsim t^{-\frac n2 + \frac nq}_k,
    \end{align*}
   and the optimality of  \eqref{eq_opti_1} is concluded.

\subsection{Existence results to the semilinear Cauchy problem}
\noindent

Based on the Duhamel's principle and the estimates of the linear Cauchy problem from Theorem \ref{Thm:1}, we have the following local existence result.

\begin{theorem}\label{local}(Local Existence)
Let $n\geq 1$, $(u_0, u_1) \in H^2(\mathbb{R}^n)\times L^2(\mathbb{R}^n)$, and suppose that $1< \alpha < \frac{n+4}{[n-4]_+}$. Then, there exists $T >0$ and a local unique Sobolev solution  $u \in \mathcal{C}([0, T], L^q(\mathbb{R}^n))$ to \eqref{Eq:MainProblem}  for all $2\leq q \leq \frac{2n}{[n-4]_+}$, excluding the case $q = \infty$ when $n = 4$.
\end{theorem}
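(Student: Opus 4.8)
The plan is to set up a standard contraction-mapping argument in a suitable Banach space built from the linear estimates of Theorem~\ref{Thm:1}, combined with Duhamel's formula. Write the solution as
\[
u(t,\cdot) = u^{\lin}(t,\cdot) + \int_0^t K(t-s,\cdot)\ast f(u(s,\cdot))\,ds,
\]
where $u^{\lin}$ denotes the solution of the linear problem with data $(u_0,u_1)$ and $K(t,\cdot)$ is the propagator appearing in \eqref{Eq:MainProblemLinear}. First I would fix the target space: since we only claim $u\in\mathcal C([0,T],L^q)$ for the energy-admissible range $2\le q\le 2n/[n-4]_+$ (and the well-posedness in $H^2\times L^2$ already holds by the conservation of energy), a convenient choice is $X_T=\mathcal C([0,T],H^2(\mathbb R^n))\cap \mathcal C^1([0,T],L^2(\mathbb R^n))$ with the sup-in-time energy norm, or more precisely a ball in this space, and then deduce the $L^q$-continuity from the Gagliardo--Nirenberg/Sobolev embedding $H^2(\mathbb R^n)\hookrightarrow L^q(\mathbb R^n)$ valid exactly for $2\le q\le 2n/[n-4]_+$ (with $q<\infty$ when $n=4$). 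The linear part $u^{\lin}$ lies in $X_T$ for every $T$ by the known well-posedness and energy conservation recalled in the introduction.

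The key steps, in order: (i) estimate the Duhamel term in the energy norm. For the $L^2$-component this is immediate from the $L^2$--$L^2$ boundedness of $K(t,\cdot)$ (the case $p=q=2$, $d_{\text{pl}}=0<1$ in Theorem~\ref{Thm:1}, giving $\|K(t,\cdot)\ast g\|_{L^2}\lesssim t\,\|g\|_{L^2}$ for $t\in(0,1)$, hence $\lesssim (t-s)$ on a bounded interval), so $\|\int_0^t K(t-s)\ast f(u(s))\,ds\|_{L^2}\lesssim T\sup_{[0,T]}\|f(u(s))\|_{L^2}$; one handles $\Delta^2$-type control of the $H^2$-norm analogously since $K$ and $\partial_t K$ commute with derivatives and the relevant multipliers are bounded on $L^2$. (ii) Control $\|f(u)\|_{L^2}=\||u|^\alpha\|_{L^2}=\|u\|_{L^{2\alpha}}^{\alpha}$ via the embedding $H^2\hookrightarrow L^{2\alpha}$, which requires $2\alpha\le 2n/[n-4]_+$, i.e. $\alpha\le n/[n-4]_+$; since $n/[n-4]_+\ge (n+4)/[n-4]_+$ fails, one instead uses the finer Gagliardo--Nirenberg inequality $\|u\|_{L^{2\alpha}}\lesssim \|u\|_{L^2}^{1-\theta}\|u\|_{H^2}^{\theta}$ for an appropriate $\theta\in[0,1]$, available precisely when $\alpha<(n+4)/[n-4]_+$ — this is where the upper bound on $\alpha$ enters. (iii) Similarly estimate the difference $f(u)-f(v)$ using the locally Lipschitz bound $|f(u)-f(v)|\lesssim |u-v|(|u|^{\alpha-1}+|v|^{\alpha-1})$ and H\"older, to get a Lipschitz estimate for the nonlinear map on the ball, with constant $\lesssim T\,R^{\alpha-1}$ where $R$ bounds the energy norm on the ball. (iv) Choose $R$ of size $\simeq \|(u_0,u_1)\|_{H^2\times L^2}$ (no smallness needed — this is only local existence), then choose $T$ small enough that the nonlinear map is a contraction on the closed ball of radius $R$ in $X_T$; Banach's fixed point theorem gives a unique $u\in X_T$. (v) Finally, upgrade to the conclusion: $u\in\mathcal C([0,T],H^2)\hookrightarrow\mathcal C([0,T],L^q)$ for all $q$ in the stated range, with the $n=4$, $q=\infty$ case excluded precisely because $H^2(\mathbb R^4)\not\hookrightarrow L^\infty$.

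The main obstacle I anticipate is step (ii)–(iii): pinning down the exact Gagliardo--Nirenberg exponents and verifying that the constraint $1<\alpha<(n+4)/[n-4]_+$ is exactly what makes the relevant interpolation exponents lie in $[0,1]$ (and, for $n\le 4$ where $[n-4]_+=0$, checking that the condition is vacuous so all $\alpha>1$ are admissible), and then organizing the H\"older splittings in the difference estimate so that the powers of $R$ and of $T$ come out with the right signs to close the contraction. A secondary technical point is making the argument clean at the borderline — strictly speaking one should either work with $f(u)=|u|^{\alpha-1}u$ (smooth enough for $\alpha>1$) or invoke the stated locally-Lipschitz hypothesis on $f$ so that $u\mapsto f(u)$ is genuinely Lipschitz from the energy ball into $L^2$; since the paper already remarks that its results hold for such $f$, I would simply cite that and carry out the estimates for a generic $f$ obeying $|f(u)-f(v)|\le C|u-v|(|u|^{\alpha-1}+|v|^{\alpha-1})$. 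The time-continuity in $H^2$ and $L^2$ of the Duhamel term is routine (dominated convergence plus strong continuity of the linear propagator) and I would not belabor it.
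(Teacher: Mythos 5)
Your energy-space route diverges from the paper's and, unfortunately, hits a genuine obstruction exactly at the step you flagged as your main worry. The Gagliardo--Nirenberg inequality
\[
\|u\|_{L^{2\alpha}} \lesssim \|u\|_{L^2}^{1-\theta}\|u\|_{\dot H^2}^{\theta}
\]
forces, by scaling, $\theta = \tfrac{n(\alpha-1)}{4\alpha}$, and the admissibility condition $\theta\in[0,1]$ is $\alpha \le \tfrac{n}{[n-4]_+}$, \emph{not} $\alpha < \tfrac{n+4}{[n-4]_+}$. There is no ``finer'' version that reaches further: for $n\ge 5$ and $\alpha > \tfrac{n}{n-4}$ one has $2\alpha > \tfrac{2n}{n-4}$, so $H^2(\mathbb R^n)\not\hookrightarrow L^{2\alpha}(\mathbb R^n)$, and the quantity $\|f(u)\|_{L^2}=\|u\|_{L^{2\alpha}}^\alpha$ simply cannot be controlled by the $H^2$ norm. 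Concretely, for $n=5$ your argument covers $1<\alpha\le 5$ but not $5<\alpha<9$, which is part of the claimed range.

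The paper's proof gets around this precisely by \emph{not} estimating the nonlinearity in $L^2$. It works in the space $X(T)$ with the norm $\sup_{t\in[0,T],\ q\in[2,r]}\|u(t,\cdot)\|_{L^q}$ (where $r=\tfrac{2n}{[n-4]_+}$), places $|u|^\alpha$ in $L^p$ with $p<2$ chosen so that $2\le p\alpha \le r$, and invokes the $L^p$--$L^q$ estimate from Theorem~\ref{Thm:1}. The propagator bound $\|K(t-s,\cdot)\ast g\|_{L^q}\lesssim (t-s)^{1-\frac n2(\frac1p-\frac1q)}\|g\|_{L^p}$ is integrable in time as long as $p>\tfrac{2n}{n+4}$, and compatibility with $p\alpha\le r$ gives a nonempty window for $p$ exactly when $\alpha<\tfrac{n+4}{[n-4]_+}$. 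This is where the claimed range actually comes from; the extra $\tfrac{4}{n-4}$ of room (beyond $\tfrac{n}{n-4}$) is bought by the $L^p\!\to\!L^q$ smoothing of $K$, not by Sobolev embedding. Note also that the paper's conclusion is $u\in\mathcal C([0,T],L^q)$ only; the $H^2$ regularity is recovered in Remark~\ref{energysolution} by citing Levandosky, and obtaining it directly in the full range needs Strichartz estimates as in Hebey--Pausader, not a plain energy contraction. Unless you replace step (ii) by an $L^p$-placement of $|u|^\alpha$ (or by Strichartz), your argument only proves the theorem for $1<\alpha\le \tfrac{n}{[n-4]_+}$.
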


\begin{remark}\label{energysolution}
We point out that Theorem \ref{local} was already obtained in \cite{Levandosky1998} (see Theorem $4.1$). In fact, in \cite{HebeyPausader} (see  Theorem 1.1), applying  Strichartz type estimates,    the authors proved  the local existence of  energy solutions $\mathcal{C}([0, T), H^2(\mathbb{R}^n))\cap  \mathcal{C}^1([0, T), L^2(\mathbb{R}^n))$ for $1< \alpha \leq \frac{n+4}{[n-4]_+}$ to \eqref{Eq:MainProblem}, i.e., they proved that a local existence result is still valid in the endpoint $\alpha = \frac{n+4}{[n-4]_+}$. In this paper we give an alternative (short) proof using the derived estimates to the linear Cauchy problem, but obtaining solution in the space $\mathcal{C}([0, T), L^q(\mathbb{R}^n))$. We also remark that since $ H^2 \subset L^q$ for $2\leq q \leq \frac{2n}{[n-4]_+}$, using Theorem $4.1$ of \cite{Levandosky1998}, we conclude that the solution obtained in Theorem \ref{local} must belong to $\mathcal{C}([0,T), H^2(\R^n))$.
\end{remark}


Under the assumption of small energy data,  one may conclude a priori uniform bounds for the energy solutions to \eqref{Eq:MainProblem} (the same reasoning holds true  for the semilinear Klein-Gordon scalar equation, see \cite{KT}).
Hence, the local solution given by Theorem \ref{local} and Remark \ref{energysolution} can be prolonged in order the get the following result.

\begin{theorem}\label{globalL2}(see, for instance,  Theorem 4.1 in \cite{HebeyPausader})
Let  us  assume  that $1<\alpha\leq \frac{n+4}{[n-4]_+}$. Then, there exists a small constant $\varepsilon>0$ such that for any data $(u_0, u_1) \in H^2(\mathbb{R}^n)\times L^2(\mathbb{R}^n)$ satisfying the assumption
$\|u_0\|_{H^2}+ \|u_1\|_{L^2}< \varepsilon,$
there is a uniquely determined global (in time) energy solution
$\mathcal{C}([0, \infty), H^2(\mathbb{R}^n))\cap  \mathcal{C}^1([0, \infty), L^2(\mathbb{R}^n))$ to \eqref{Eq:MainProblem}.
\end{theorem}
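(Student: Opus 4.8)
The plan is to obtain Theorem \ref{globalL2} by combining the local existence result (Theorem \ref{local} together with Remark \ref{energysolution}, which promotes the solution to the energy class) with an a priori energy bound that is uniform in time when the data are small, and then to run the standard continuation argument. First I would fix the conserved quantity of the linear problem, $E(u)(t) = \tfrac12\int_{\R^n}(|u_t|^2 + |\Delta u|^2 + |u|^2)\,dx$, and compute its time derivative along a solution of the semilinear equation \eqref{Eq:MainProblem}. Multiplying the equation by $u_t$ and integrating by parts gives
\[
\frac{d}{dt}E(u)(t) = \int_{\R^n} f(u)\,u_t\,dx,
\]
so that, with $f(u)=|u|^\alpha$ (and $F(u)=\tfrac1{\alpha+1}|u|^{\alpha+1}u$ an antiderivative of $f$, or the corresponding primitive for $|u|^{\alpha-1}u$), one gets the modified conserved energy
\[
\mathcal{E}(u)(t) := E(u)(t) - \int_{\R^n} F(u(t,x))\,dx = \mathcal{E}(u)(0).
\]

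Next I would use the Sobolev embedding $H^2(\R^n)\hookrightarrow L^{\alpha+1}(\R^n)$, which holds precisely because $\alpha+1 \le \frac{2n}{[n-4]_+}$, i.e.\ because $\alpha \le \frac{n+4}{[n-4]_+}$ — this is exactly where the endpoint exponent enters and why the hypothesis of the theorem is the natural one. This embedding gives
\[
\left| \int_{\R^n} F(u)\,dx \right| \lesssim \|u\|_{L^{\alpha+1}}^{\alpha+1} \lesssim \|u\|_{H^2}^{\alpha+1} \lesssim E(u)(t)^{\frac{\alpha+1}{2}},
\]
since $\|u\|_{H^2}^2 \le 2E(u)(t)$ (note the mass term $|u|^2$ in $E$ controls the full $H^2$ norm, not just $\|\Delta u\|_{L^2}^2$). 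Combining with energy conservation,
\[
E(u)(t) \le \mathcal{E}(u)(0) + C\,E(u)(t)^{\frac{\alpha+1}{2}} \le C_0\big(\|u_0\|_{H^2}^2 + \|u_1\|_{L^2}^2\big)^{\min\{1,(\alpha+1)/2\}}\;+\;C\,E(u)(t)^{\frac{\alpha+1}{2}},
\]
where I have also bounded $\mathcal{E}(u)(0)$ by the data using the same embedding. Since $\alpha>1$, the exponent $\tfrac{\alpha+1}{2}>1$, so a standard bootstrap/continuity argument shows: if $E(u)(0)$ (equivalently $\|u_0\|_{H^2}^2+\|u_1\|_{L^2}^2$) is below a threshold $\varepsilon^2$, then $E(u)(t)$ cannot leave the small ball where $g(y):=y - C y^{(\alpha+1)/2}$ is increasing, hence $E(u)(t) \le 2E(u)(0)$ for all $t$ in the interval of existence.

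Finally I would close the argument by continuation: the local solution from Theorem \ref{local} (in the energy class, by Remark \ref{energysolution}) exists on a maximal interval $[0,T_{\max})$ with the property that if $T_{\max}<\infty$ then $\|u(t)\|_{H^2}+\|u_t(t)\|_{L^2}\to\infty$ as $t\to T_{\max}^-$ (the blow-up alternative, standard for this kind of local well-posedness). The uniform energy bound just derived contradicts this, so $T_{\max}=\infty$, giving the global energy solution; uniqueness is inherited from the local statement. The main obstacle — or rather the point requiring the most care — is the justification that $E(u)$ is differentiable and the multiplier computation $\frac{d}{dt}E = \int f(u)u_t$ is legitimate for merely energy-class solutions; this is typically handled by working first at the level of regularized/approximate solutions (or by invoking that the local solution constructed by a fixed-point scheme satisfies the energy identity by density), and then passing to the limit. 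A secondary subtlety is making sure the sign or size of $\int F(u)$ does not spoil coercivity: here one does \emph{not} need a favorable sign since $\tfrac{\alpha+1}{2}>1$ makes the nonlinear term subordinate to $E(u)$ for small energies, which is precisely the mechanism that fails for the massless supercritical problem.
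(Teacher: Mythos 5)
Your proposal is correct and follows essentially the same route the paper sketches just before the theorem (conserved modified energy, the Sobolev embedding $H^2\hookrightarrow L^{\alpha+1}$ exactly at $\alpha\le\tfrac{n+4}{[n-4]_+}$, a continuity/bootstrap argument, and continuation of the local solution from Theorem \ref{local} and Remark \ref{energysolution}), which the paper itself only outlines in two sentences and otherwise delegates to \cite{HebeyPausader}. One small slip: the primitive of $f(u)=|u|^\alpha$ is $F(u)=\tfrac{1}{\alpha+1}|u|^\alpha u$, not $\tfrac{1}{\alpha+1}|u|^{\alpha+1}u$ (which is of order $|u|^{\alpha+2}$), though your subsequent estimate $\bigl|\int F(u)\,dx\bigr|\lesssim\|u\|_{L^{\alpha+1}}^{\alpha+1}$ already reflects the correct order $|u|^{\alpha+1}$.
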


Next, inspired by \cite{ikeda2016remark}, we give a (counterpart of Theorem \ref{local}) non‐existence result for local weak solution to \eqref{Eq:MainProblem} for large values of $\alpha$. Before to state the result we need to recall the definition of weak solution.

\begin{defi}\label{defi_weak_sol}
	We say that $u \in L^{\alpha}_{\text{loc}}([0,T) \times \R^n)$, $T>0$, is a weak solution to \eqref{Eq:MainProblem} with $u_0 \equiv 0$ when
	\begin{equation*}
		\int_0^T \int_{\R^n} u(t,x) \{\partial^2_t + \Delta^2_x + 1\} \psi(t,x) dxdt = \int_0^T \int_{\R^n} |u(t,x)|^{\alpha}  \psi(t,x) dxdt +
		\int_{\R^n} u_1(x) \psi(0,x) dxdt
	\end{equation*}	
	for every $\psi \in \mathcal{C}^{\infty}_{c}([0,T) \times \R^n)$.
\end{defi}

\begin{theorem}\label{non-existence_local}
Let $m \in [1,2]$ and suppose $n > 2m$. If $\alpha > \frac{n+2m}{n-2m}$, then there exists  $0 \neq f \in L^m(\mathbb{R}^n)$ such that if there exists a local  weak solution to \eqref{Eq:MainProblem} with $u_0 \equiv 0$ and $u_1 = \lambda f$, for some $\lambda \geq 0$, then $\lambda = 0$.
\end{theorem}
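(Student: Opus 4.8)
\textbf{Proof proposal for Theorem \ref{non-existence_local}.}

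The plan is to argue by contradiction using a rescaled test function argument in the spirit of \cite{ikeda2016remark}. Suppose a local weak solution $u$ on $[0,T)\times\R^n$ exists for $u_0\equiv 0$ and $u_1=\lambda f$ with $\lambda>0$, where $f$ will be chosen later. First I would fix a cutoff $\psi\in\mathcal{C}^\infty_c([0,\infty)\times\R^n)$ with $0\le\psi\le 1$, $\psi\equiv 1$ on $[0,1/2]\times B_1$ and $\supp\psi\subset[0,1)\times B_2$, and for $R\in(0,1)$ set $\psi_R(t,x):=\psi(t/R^4,x/R)$ (the parabolic-type scaling dictated by the operator $\partial_t^2+\Delta_x^2+1$, since $\partial_t^2$ scales like $R^{-8}$ and $\Delta_x^2$ like $R^{-4}$). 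Plugging $\psi_R$ (raised to a suitable power to keep it admissible and to control boundary terms, e.g. $\psi_R^\ell$ with $\ell=\alpha'$) into the weak formulation gives
\[
\lambda\int_{\R^n} f(x)\psi_R(0,x)^\ell\,dx + \int_0^{T}\!\!\int_{\R^n}|u|^\alpha\psi_R^\ell\,dx\,dt
= \int_0^{T}\!\!\int_{\R^n} u\,\{\partial_t^2+\Delta_x^2+1\}(\psi_R^\ell)\,dx\,dt.
\]

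Next I would estimate the right-hand side by Hölder's inequality with exponents $\alpha$ and $\alpha'$, splitting off the weight $\psi_R^{\ell/\alpha}$ from each derivative term, to obtain a bound of the form
\[
\Bigl|\int\!\!\int u\,\{\partial_t^2+\Delta_x^2+1\}(\psi_R^\ell)\Bigr|
\le \varepsilon\int_0^{T}\!\!\int_{\R^n}|u|^\alpha\psi_R^\ell + C(\varepsilon)\, I_R,
\]
where $I_R$ collects the $L^{\alpha'}$-norms (over the support of $\nabla$-type derivatives of $\psi_R$) of $R^{-8}$, $R^{-4}$ and $1$, times the measure of the relevant region. A scaling count gives $I_R\simeq R^{-8\alpha'}\cdot R^{n+4}$ from the $\partial_t^2$ term (this is the dominant one for small $R$), i.e. $I_R\simeq R^{n+4-8\alpha'}$; the $\Delta_x^2$ term contributes $R^{n+4-4\alpha'}$ and the zero-order term $R^{n+4}$, all of which are dominated by the first as $R\to 0$ when $\alpha'$ is large, and one checks the exponent $n+4-8\alpha'$ against the threshold. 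Absorbing the $\varepsilon$-term into the left side yields
\[
\lambda\int_{\R^n} f(x)\psi_R(0,x)^\ell\,dx + \tfrac12\int_0^{T}\!\!\int_{\R^n}|u|^\alpha\psi_R^\ell\,dx\,dt \le C\,R^{n+4-8\alpha'}.
\]
If the exponent $n+4-8\alpha'<0$, i.e. $\alpha'>(n+4)/8$, the right side tends to $0$ as $R\to0$; but $\psi_R(0,x)\to 1$ pointwise, so by monotone/dominated convergence $\lambda\int f\le 0$, and if $f\ge0$, $f\not\equiv0$ this forces $\lambda=0$.

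The main obstacle — and the reason the precise exponent $(n+2m)/(n-2m)$ (rather than something like the naive count above) appears — is that $f$ is only assumed to lie in $L^m$, not $L^1$, so $\int f\psi_R(0,\cdot)$ need not converge to a positive constant; instead one should choose $f$ of critical decay, e.g. $f(x)\simeq \langle x\rangle^{-n/m}(\log\langle x\rangle)^{-1}$ type (or a smooth positive function with $f\in L^m\setminus L^{m_0}$ for $m_0$ slightly smaller), and do the whole argument with a \emph{different} scaling: take $R\to\infty$ rather than $R\to0$, exploiting the local solution on a time interval by a further rescaling, or run the estimate on dyadic spatial annuli. In that regime one must lower-bound $\int_{|x|\le R} f$ by $\simeq R^{n-n/m}=R^{n/m'}$ (up to logs) and balance it against the decay $R^{n+4-8\alpha'}$ coming from the test function; here I would instead use the \emph{elliptic-type} scaling $\psi_R(x)=\psi(x/R)$ in space only together with the known $L^2$–$L^2$ (energy) bound on $u$ to handle the time integral, which produces the right-hand power $R^{(n+4)/2 - \text{(something)}\,\alpha'}$, and the condition $\alpha>(n+2m)/(n-2m)$ is exactly what makes the spatial gain $R^{n/m'}$ beat the test-function cost as $R\to\infty$, giving $\lambda\int_{\R^n} f\le 0$ and hence $\lambda=0$. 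Carefully pinning down which scaling and which a priori bound on $u$ (pure $L^\alpha_{\mathrm{loc}}$ versus the energy estimate) is used, and verifying the boundary terms at $t=0$ vanish under the chosen $\psi_R$, is the technical heart of the proof; the rest is Hölder and bookkeeping.
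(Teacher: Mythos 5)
Your proposal has the right overall shape (the Ikeda--Inui test-function method), but it misses the two points that make the argument actually close, and the scaling you chose is incorrect. The relevant balance is between $\partial_t^2$ and $\Delta_x^2$, which forces time $\sim$ (space)$^2$: the paper takes $\eta_\tau(t)=\eta(t/\tau)$ and $\phi_\tau(x)=\phi(x/\tau^{1/2})$, so that both $\partial_t^2\psi_\tau$ and $\Delta_x^2\psi_\tau$ scale like $\tau^{-2}$, yielding the bound $\tau^{-2+(1+n/2)/\alpha'}I(\tau)^{1/\alpha}$ where $I(\tau)=\int\int|u|^\alpha\psi_\tau$. Your scaling $t/R^4$, $x/R$ puts $\partial_t^2\sim R^{-8}$ and $\Delta_x^2\sim R^{-4}$, so the two derivative terms do not balance; the exponent $n+4-8\alpha'$ you compute is not the right one, and you yourself notice it points in the wrong direction in $\alpha$.

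The decisive missing idea is the choice of the initial datum $f$. The paper takes $f(x)=|x|^{-k}$ on $\{|x|\le 1\}$ and zero outside, which lies in $L^m$ iff $k<n/m$. As $\tau\to0^+$ the support of $\phi_\tau(\cdot)$ shrinks to the origin, so $\int f\phi_\tau$ tends to $0$, not to a positive constant (your claim that $\psi_R(0,x)\to1$ pointwise as $R\to0$ is false except at $x=0$); the point is that Lemma~\ref{lemma_inequality_initial_datum} gives the quantitative lower bound $\int f\phi_\tau\gtrsim\tau^{(n-k)/2}$, which decays \emph{slowly} when $k$ is large, because $f$ is singular at the origin. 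After Young's inequality one gets $\lambda\lesssim\tau^{-2\alpha'+1+k/2}$, and the exponent is positive exactly when $k>2(\alpha+1)/(\alpha-1)$. Intersecting this with $k<n/m$ gives the stated threshold $\alpha>(n+2m)/(n-2m)$. Your second paragraph --- switching to $R\to\infty$, to $f$ with critical decay at infinity, and to a spatial-only scaling combined with an energy bound on $u$ --- is a genuinely different direction from the paper, is not carried to completion, and would also need an a priori bound on $u$ that the definition of weak solution does not supply. The proposal therefore has a real gap: it does not identify the singular profile of $f$ at the origin nor the $\tau$-balanced scaling which together produce the exponent $(n+2m)/(n-2m)$.
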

	
\begin{remark}
Thanks to  Theorems \ref{local}-\ref{globalL2} and Theorem \ref{non-existence_local} with $m=2$, we conclude that for $n\geq 5$, $\alpha=\frac{n+4}{n-4}$ is a critical  exponent for the local (and global) existence of weak solution to \eqref{Eq:MainProblem}.
\end{remark}

\begin{remark}\label{localL1}
If we assume $u_1 \in L^1(\mathbb{R}^n)- L^{1+\epsilon}(\mathbb{R}^n)$ for any $\epsilon>0$ and use the estimates to the linear Cauchy problem in Theorem \ref{Thm:1}, one can still get a local existence result for all $\frac{n}{2}<\alpha < \frac{n}{[n-2]_+}$ to \eqref{Eq:MainProblem} when the space dimension $n\leq 3$ but not for $n\geq 4$. Indeed, in order to get a local unique solution $u \in \mathcal{C}([0, T), L^q(\mathbb{R}^n))$ we have to assume  two compatibility conditions, namely,  $1-\frac{n}{2}\left(1-\frac{1}{q} \right)\geq 0$ in \eqref{eq:L^1-L^p estimates} and $d_{\text{pl}}(1, q)=\frac{n}{2q}<1$, that is,
\[  \frac{n}{2} <q \leq \frac{n}{[n-2]_+}.\]
For this reason, unlike in Theorem \ref{Thm:GlobalExistence1}, in the next Theorem \ref{Thm:GlobalExistence2} we have to assume $u_1 \in L^1(\mathbb{R}^n)\cap L^r(\mathbb{R}^n)$, for some $r\in (1, 2]$.
\end{remark}

In the following results we apply the obtained estimates to the linear Cauchy problem to derive the global (in time) existence of solution to \eqref{Eq:MainProblem} under the assumption of small datum $u_1 \in L^1(\mathbb{R}^n)$.
Since the $L^1 - L^q$ estimates change according whether $q$ is smaller or larger than $3$, we have  to work with the two following candidates to critical exponents
%
\begin{equation} \label{eq:crit exp}
\alpha_c = 1+\frac 4n \qquad \text{and} \qquad \tilde \alpha_c = 2+\frac 2n.
\end{equation}
We point out that $\alpha_c \leq \tilde{\alpha}_c$ if, and only if, $n \leq 2$. Here we use the expression critical exponents in the sense that in the supercritical case, we prove the existence of a global in time solution that satisfies the same optimal decay estimate as the solution of the associated linear Cauchy problem. However, it is not clear what happens in the subcritical case assuming only $u_1 \in L^1(\mathbb{R}^n)- L^{1+\epsilon}(\mathbb{R}^n)$ for any $\epsilon>0$.

\begin{theorem} \label{Thm:GlobalExistence1}
Let $n=1,2$ and assume $u_0 \equiv 0$ and $\alpha>\alpha_c$, where $ \alpha_c$ is given in \eqref{eq:crit exp}. Then, there exists a small constant $\varepsilon>0$ such that for any datum $u_1 \in L^1(\mathbb{R}^n)$ satisfying the assumption
$\|u_1\|_{L^1}< \varepsilon,$
there is a uniquely determined global (in time) Sobolev solution
\begin{align*}
u \in \mathcal{C}([0,\infty), L^2(\mathbb{R}^n) \cap L^\infty(\mathbb{R}^n))
\end{align*}
to \eqref{Eq:MainProblem}. Moreover, the solution satisfies the following decay estimates:
\begin{align*}
\|u(t,\cdot)\|_{L^q} \leq C (1+t)^{-\frac{n}{4}\left( 1-\frac{1}{q} \right)+\frac{n}4 \beta(1, q)}  (\log (1+t))^\gamma\,\|u_1\|_{L^1},
\end{align*}
for all $ q \in [2,\infty]$, where $\gamma$ is defined in \eqref{eq:L^1-L^p estimates}, and the constant $C>0$ does not depend on the initial datum.
\end{theorem}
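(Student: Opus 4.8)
The plan is to run a standard contraction mapping argument in a suitable weighted space of functions with values in $L^2(\R^n)\cap L^\infty(\R^n)$, feeding in the $L^1$–$L^q$ linear estimates of Theorem \ref{Thm:1} (as packaged in Remark \ref{remark_L1_Lq_estimates}) through Duhamel's principle. Write the solution operator to \eqref{Eq:MainProblemLinear} with datum $g$ as $u_1\mapsto K(t,\cdot)\ast g$, so that by Duhamel a mild solution of \eqref{Eq:MainProblem} with $u_0\equiv 0$ solves
\[
u(t,x)=K(t,\cdot)\ast u_1 + \int_0^t K(t-s,\cdot)\ast |u(s,\cdot)|^\alpha\,ds =: u^{\mathrm{lin}}(t,x)+ Nu(t,x).
\]
First I would fix the solution space: for $n=1,2$, set
\[
X=\Big\{ u\in \mathcal C([0,\infty),L^2\cap L^\infty): \|u\|_X:=\sup_{t\ge 0}\ \sup_{q\in\{2,\infty\}}\ (1+t)^{\frac n4(1-\frac1q)-\frac n4\beta(1,q)}(\log(1+t))^{-\gamma}\|u(t,\cdot)\|_{L^q}<\infty\Big\},
\]
and show that $\Phi(u):=u^{\mathrm{lin}}+Nu$ maps the ball $\{\|u\|_X\le 2C_0\varepsilon\}$ into itself and is a contraction there, provided $\varepsilon=\|u_1\|_{L^1}$ is small. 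The linear term is controlled directly by \eqref{eq:L^1-L^p estimates} (note that for $n\le 2$ the condition \eqref{eq:nonsing_cond} holds for all $q$, so there is no low-frequency singularity and $u^{\mathrm{lin}}\in\mathcal C([0,\infty),L^2\cap L^\infty)$ with $\|u^{\mathrm{lin}}\|_X\le C_0\|u_1\|_{L^1}$). The bulk of the work is the nonlinear estimate.

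For the nonlinear term, I would split the $s$-integral at $s=t/2$ and, on each piece, estimate $\|K(t-s,\cdot)\ast|u(s,\cdot)|^\alpha\|_{L^q}$ using \eqref{eq:L^1-L^p estimates} with datum $|u(s,\cdot)|^\alpha$. Here one uses that $\||u(s,\cdot)|^\alpha\|_{L^1}=\|u(s,\cdot)\|_{L^\alpha}^\alpha$ and $\||u(s,\cdot)|^\alpha\|_{L^\infty}=\|u(s,\cdot)\|_{L^\infty}^\alpha$; the intermediate norm $\|u(s,\cdot)\|_{L^\alpha}$ is recovered from $\|u\|_X$ by interpolating between the $L^2$ and $L^\infty$ bounds, which is legitimate since $\alpha>\alpha_c=1+4/n\ge 2$ for $n\le 2$ (so $\alpha\in[2,\infty)$, at least when we also keep $\alpha$ below the local-existence threshold; for very large $\alpha$ in $n=1,2$ one has $[n-4]_+=0$ so there is no upper restriction and $L^2\cap L^\infty$ still controls $L^\alpha$). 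This yields, for $q\in\{2,\infty\}$,
\[
\|Nu(t,\cdot)\|_{L^q}\lesssim \|u\|_X^\alpha\int_0^{t/2}(1+t-s)^{-\frac n4(1-\frac1q)+\frac n4\beta(1,q)}(\log(1+t-s))^\gamma\,(1+s)^{-\frac n2(\alpha-1)+ \frac{n\alpha}{4}\beta(1,\alpha)\cdot(\cdots)}(\log(1+s))^{\alpha\gamma}\,ds + (\text{symmetric piece}),
\]
where the exponent on $(1+s)$ comes from the $L^\alpha$-in-$x$ bound on $u(s,\cdot)$ raised to the power $\alpha$. On $[0,t/2]$ the factor $(1+t-s)\simeq (1+t)$ comes out, and the remaining $s$-integral converges at infinity precisely when the $s$-exponent is $<-1$, which is where the condition $\alpha>\alpha_c$ (equivalently $\frac n2(\alpha-1)>\frac n2\cdot\frac4n\cdot\frac12$… ) enters — more carefully, the critical bookkeeping is: for the $L^2$ channel the gain needed is $>1$, and that is exactly $\alpha>1+4/n$, while for the $L^\infty$ channel the relevant integrability threshold (because $\beta(1,\infty)=\frac1\infty+\frac3\infty-2$ is negative, i.e. $\beta(1,\infty)=0$ actually, so the $L^\infty$ decay rate is $(1+t)^{-n/4}$) turns out to require $\alpha>\tilde\alpha_c=2+2/n$; this is the reason the theorem's hypothesis is stated for $n=1,2$ where $\alpha_c\ge\tilde\alpha_c$ is \emph{false} — wait, the paper says $\alpha_c\le\tilde\alpha_c$ iff $n\le 2$, so in fact for $n=1,2$ the binding condition is $\alpha>\tilde\alpha_c$, but the statement only asks $\alpha>\alpha_c$; I would double-check which channel is binding and, if necessary, note that for $n\le 2$ one can run the $L^\infty$ estimate through an intermediate $L^{q_0}$ with $q_0$ slightly below $3$ using the $(\log)$-free branch $q>3$ is unavailable, so one uses $q\le 3$ branch with the $(\log t)^\gamma$ loss, which only costs logarithms and does not change the power threshold). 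On $[t/2,t]$ the roles reverse: $(1+s)\simeq(1+t)$ comes out with the power $\frac n2(\alpha-1)$ (times the appropriate $\beta$ correction), and the $\int_{t/2}^t(1+t-s)^{-\frac n4(1-\frac1q)+\cdots}d s$ must converge near $s=t$, which needs $\frac n4(1-\frac1q)-\frac n4\beta(1,q)<1$; this is a constraint on $q$, satisfied for $n\le 2$ and all $q\in[2,\infty]$ since $\frac n4(1-\frac1q)\le \frac n4<1$. Matching the resulting power of $(1+t)$ with the target weight in $\|\cdot\|_X$ closes the self-map property, and the same estimate with the elementary inequality $\big||a|^\alpha-|b|^\alpha\big|\lesssim |a-b|(|a|^{\alpha-1}+|b|^{\alpha-1})$ (plus Hölder) gives the contraction.

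The main obstacle I anticipate is the careful exponent bookkeeping near the endpoints $q=\infty$ and near the threshold $q=3$ where the $L^1$–$L^q$ estimate changes form and picks up the logarithmic factor $(\log t)^\gamma$: one must verify that the accumulated logarithms from iterating the $L^\alpha$-interpolation through the $q\le 3$ branch do not destroy the self-map property, and that the critical power is governed by $\max\{\alpha_c,\tilde\alpha_c\}$ exactly as the two candidate exponents in \eqref{eq:crit exp} suggest — so that the hypothesis $\alpha>\alpha_c$ (which for $n=1,2$ coincides with the weaker of the two) is genuinely sufficient because for $n\le 2$ one has $\tilde\alpha_c\le\alpha_c$… here I would recheck the direction of that inequality against the paper's own statement and adjust the split of the integral accordingly; everything else is a routine Banach fixed point argument. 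Finally, uniqueness in $\mathcal C([0,\infty),L^2\cap L^\infty)$ follows from the contraction (on each finite time interval, then globally), and the asserted decay estimate is just the statement $\|u\|_X\le 2C_0\|u_1\|_{L^1}$ unwound.
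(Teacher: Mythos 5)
Your overall strategy (Duhamel plus a weighted Banach fixed-point) is the same as the paper's, but the specific design of the solution space contains a gap that makes your exponent bookkeeping close only in a strictly smaller range of $\alpha$ than what the theorem asserts, and it is exactly this gap that produces the confusion you flag at the end about $\alpha_c$ versus $\tilde\alpha_c$.

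The issue is the decision to weight only the $L^2$ and $L^\infty$ channels in $X$ and to recover $\|u(s,\cdot)\|_{L^\alpha}$ by interpolating between them. For $q=2$ the linear $L^1$--$L^2$ estimate in \eqref{eq:L^1-L^p estimates} gives \emph{no} time decay: since $\beta(1,2)=\tfrac12$, the exponent $-\tfrac n4(1-\tfrac12)+\tfrac n4\beta(1,2)=0$. Interpolating between this non-decaying $L^2$ bound and the $L^\infty$ bound $(1+s)^{-n/4}$ yields
\[
\|u(s,\cdot)\|_{L^\alpha}^\alpha \lesssim \|u(s,\cdot)\|_{L^2}^{2}\,\|u(s,\cdot)\|_{L^\infty}^{\alpha-2}\lesssim (1+s)^{-\frac n4(\alpha-2)}\|u\|_X^\alpha,
\]
which is integrable on $[0,\infty)$ only when $\alpha>2+\tfrac4n$. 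This is strictly stronger than the hypothesis $\alpha>\alpha_c=1+\tfrac4n$; for $n=2$, for instance, your argument does not close for $\alpha\in(3,4]$. By contrast, the paper's space $X(T)$ keeps a weighted supremum over \emph{all} $q\in[2,\infty]$ (split into $q<3$, $q=3$, $q>3$ to account for the $\beta$-correction and the log at $q=3$), so one may read the $L^\alpha$ decay directly off the $L^1$--$L^\alpha$ estimate: since $\alpha>\alpha_c\geq 3$ when $n\leq 2$, one has $\beta(1,\alpha)=0$ and $\gamma=0$, hence $\|u(s,\cdot)\|_{L^\alpha}^\alpha\lesssim(1+s)^{-\frac n4(\alpha-1)}\|u\|_{X(T)}^\alpha$, integrable precisely for $\alpha>\alpha_c$. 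This is the step that collapses the threshold from your $2+4/n$ back to $1+4/n$, and it also eliminates the need to worry about an ``$L^\infty$ channel'' threshold $\tilde\alpha_c$: the nonlinearity is only measured in $L^1$ (via $\||u|^\alpha\|_{L^1}=\|u\|_{L^\alpha}^\alpha$), so there is a single threshold $\alpha_c$ for every target exponent $q$. (Your instinct that $\tilde\alpha_c$ should be irrelevant here is right; for $n\leq 2$ one has $\alpha_c\geq\tilde\alpha_c$, so $\alpha>\alpha_c$ already implies $\alpha>\tilde\alpha_c$ --- the paper's remark after \eqref{eq:crit exp} states the inequality with the wrong orientation, which is likely the source of your uncertainty.)

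Two lesser remarks. First, tracking only $q\in\{2,\infty\}$ would in any case not deliver the \emph{stated} decay estimate for intermediate $q\in(2,3)$: interpolation between $L^2$ (rate $0$) and $L^\infty$ (rate $-n/4$) gives $(1+t)^{-\frac n4+\frac n{2q}}$, strictly weaker than the claimed $(1+t)^{-\frac n2+\frac nq}$. Second, the split of the Duhamel integral at $s=t/2$ is unnecessary once the correct pointwise-in-$s$ decay of $\|u(s,\cdot)\|_{L^\alpha}^\alpha$ is in hand; the paper simply invokes Lemma \ref{Lem:Auxiliary-GlobalExistence} with $\nu=-\tfrac n4(1-\tfrac1q)+\tfrac n4\beta(1,q)>-1$ and $\mu=-\tfrac n4(\alpha-1)<-1$, which handles the whole integral at once.
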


Due to Remark \ref{localL1}, in the next result we have to ask the additional hypothesis $u_1 \in  L^r(\mathbb{R}^n)$, for some $r\in (1, 2]$. In order to avoid further restrictions on the space of solution and on the upper bound for $\alpha$, we shall assume $u_1 \in L^1(\mathbb{R}^n)\cap  L^2(\mathbb{R}^n)$.

\begin{theorem} \label{Thm:GlobalExistence2}
Let $n=3,4$ and assume $u_0 \equiv 0$ and $\alpha>\tilde{\alpha}_c$, where $\tilde{\alpha}_c$ is defined in \eqref{eq:crit exp}. Then, there exists a small constant $\varepsilon>0$ such that for any $u_1 \in \mathcal{A}:=L^1(\mathbb{R}^n)\cap L^2(\mathbb{R}^n)$ satisfying the assumption
\[ \|u_1\|_{\mathcal{A}}:=\|u_1\|_{L^1}+\|u_1\|_{L^2}<\varepsilon, \]
there exists a uniquely determined global (in time) Sobolev solution
\begin{align*}
u \in \mathcal{C}([0,\infty),  L^2(\mathbb{R}^n) \cap L^{\tilde{p}}(\mathbb{R}^n)),
\end{align*}
for any $\tilde p \in [2,\infty)$ to \eqref{Eq:MainProblem}. Moreover, the following decay estimate holds:
\begin{align*}
&\|u(t,\cdot)\|_{L^q} \leq C (1+t)^{-\frac{n}{4}\left( 1-\frac{1}{q} \right)+\frac{n}4 \beta(1, q)}(\log (1+t))^\gamma \,  \|u_1\|_{\mathcal{A}},
\end{align*}
for all $ q \in [2, \tilde p]$, where $\gamma$ is defined as in \eqref{eq:L^1-L^p estimates}, and the constant $C>0$ does not depend on the initial datum.
\end{theorem}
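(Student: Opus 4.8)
The plan is to solve \eqref{Eq:MainProblem} by a Banach fixed point argument applied to the Duhamel operator
\[
N[u](t,\cdot):=K(t,\cdot)\ast u_1+\int_0^t K(t-s,\cdot)\ast |u(s,\cdot)|^\alpha\,ds,
\]
where $K(t,\cdot)$ is the propagator of \eqref{Eq:MainProblemLinear}, so that the solution of \eqref{Eq:MainProblemLinear} equals $K(t,\cdot)\ast u_1$. Writing $w_q(t):=(1+t)^{-\frac n4(1-\frac1q)+\frac n4\beta(1,q)}(\log(2+t))^{\gamma(1,q)}$ for $q\in[2,\tilde p]$, with $\gamma(1,q)$ as in \eqref{eq:L^1-L^p estimates}, let $X$ be the complete metric space of $u\in\mathcal{C}([0,\infty),L^2(\R^n)\cap L^{\tilde p}(\R^n))$ with
\[
\|u\|_X:=\sup_{t\geq 0}\ \sup_{2\leq q\leq\tilde p}\ w_q(t)^{-1}\|u(t,\cdot)\|_{L^q}<\infty,
\]
endowed with the distance given by the same weights applied to $\|u(t,\cdot)-v(t,\cdot)\|_{L^q}$. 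It suffices to carry out the argument for $\tilde p\geq 2\alpha$ (so $2\alpha>4>3$ and $\gamma(1,2\alpha)=0$): for a smaller $\tilde p$ one invokes $L^2\cap L^{\tilde p_0}\subset L^2\cap L^{\tilde p}$ together with local uniqueness of $L^2$-solutions (Theorem \ref{local} and Remark \ref{energysolution}), and the decay for every $q\in[2,\tilde p]$ is by construction contained in $\|u\|_X<\infty$.

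First I would bound the free term in $X$. By Theorem \ref{Thm:1} and Remark \ref{remark_L1_Lq_estimates}, for $t\geq 1$ and $q\in(2,\tilde p]$ (where $d_{\text{pl}}(1,q)=\frac n{2q}<1$) one has $\|K(t,\cdot)\ast u_1\|_{L^q}\lesssim w_q(t)\|u_1\|_{L^1}$; at $q=2$ (where $d_{\text{pl}}(1,2)=\frac n4$ is admissible only for $n=3$) one uses instead the conservation of energy to get $\|K(t,\cdot)\ast u_1\|_{L^2}\lesssim\|u_1\|_{L^2}\simeq w_2(t)\|u_1\|_{L^2}$, since $\beta(1,2)=\frac12$ makes $w_2\simeq 1$. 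For $t\in(0,1)$ one uses the $L^2-L^q$ part of \eqref{linearestimates}, which is available because $d_{\text{pl}}(2,q)\leq 1$ for $n=3,4$ and $q<\infty$ and which is non-singular because $1-\frac n2(\frac12-\frac1q)\geq 0$ in this range, obtaining $\|K(t,\cdot)\ast u_1\|_{L^q}\lesssim\|u_1\|_{L^2}$. Altogether $\|K(\cdot)\ast u_1\|_X\lesssim\|u_1\|_{L^1}+\|u_1\|_{L^2}=\|u_1\|_{\mathcal{A}}$; this step is precisely what forces the hypothesis $u_1\in L^1\cap L^2$ rather than merely $u_1\in L^1$, as anticipated in Remark \ref{localL1}.

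The heart of the proof is the estimate $\|N[u]-K(\cdot)\ast u_1\|_X\lesssim\|u\|_X^\alpha$, together with the Lipschitz bound $\|N[u]-N[v]\|_X\lesssim(\|u\|_X^{\alpha-1}+\|v\|_X^{\alpha-1})\|u-v\|_X$, which follows from $\bigl|\,|u|^{\alpha}-|v|^{\alpha}\,\bigr|\lesssim|u-v|\,(|u|^{\alpha-1}+|v|^{\alpha-1})$ and Hölder's inequality exactly as the former does, so I only sketch the former. Fix $q\in[2,\tilde p]$. For $q=2$ one may simply use the $L^2-L^2$ energy estimate on all of $[0,t]$, reducing matters to $\int_0^\infty\|u(s)\|_{L^{2\alpha}}^\alpha\,ds\lesssim\|u\|_X^\alpha\int_0^\infty w_{2\alpha}(s)^\alpha\,ds<\infty$ (finite because $2\alpha>3$ and $\alpha>\alpha_c$). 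For $q>2$ and $t\leq 2$, keep a single integral and use the $L^2-L^q$ estimate of Theorem \ref{Thm:1} with $\||u(s)|^\alpha\|_{L^2}=\|u(s)\|_{L^{2\alpha}}^\alpha$ (valid since $2\alpha\leq\tilde p$ and $d_{\text{pl}}(2,q)\leq 1$): the kernel has no singularity as $s\to t$ because $1-\frac n2(\frac12-\frac1q)\geq 0$, and $\|u(s)\|_{L^{2\alpha}}\lesssim\|u\|_X$ for $s\leq 2$, so the term is $\lesssim\|u\|_X^\alpha\simeq w_q(t)\|u\|_X^\alpha$. For $q>2$ and $t\geq 2$, split $\int_0^t=\int_0^{t/2}+\int_{t/2}^t$. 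On $[0,t/2]$, where $t-s\simeq t$, use the $L^1-L^q$ estimate \eqref{eq:L^1-L^p estimates} with $\||u(s)|^\alpha\|_{L^1}=\|u(s)\|_{L^\alpha}^\alpha\leq\|u\|_X^\alpha w_\alpha(s)^\alpha$; since $w_\alpha(s)^\alpha\simeq(1+s)^{-\frac n2(\alpha-2)}$ if $2\leq\alpha<3$ and $w_\alpha(s)^\alpha\simeq(1+s)^{-\frac n4(\alpha-1)}$ (up to a logarithm if $\alpha=3$), the contribution is $\lesssim w_q(t)\|u\|_X^\alpha\int_0^\infty w_\alpha(s)^\alpha\,ds$, and $\int_0^\infty w_\alpha(s)^\alpha\,ds<\infty$ exactly when $\alpha>\tilde\alpha_c$ in the first regime and $\alpha>\alpha_c$ in the second. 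Since $\tilde\alpha_c\in(2,3)$ and $\tilde\alpha_c>\alpha_c$ for $n=3,4$, the single assumption $\alpha>\tilde\alpha_c$ covers both. On $[t/2,t]$, where $s\simeq t$ and $t-s$ may be small, use the $L^2-L^q$ estimate again, pull the nearly constant factor $\|u(s)\|_{L^{2\alpha}}^\alpha\lesssim\|u\|_X^\alpha w_{2\alpha}(t)^\alpha$ out of the integral, and check that $w_{2\alpha}(t)^\alpha$ multiplied by the polynomial factor $t^{1-\frac n4(\frac12-\frac1q)}$ produced by the $(t-s)$-integral is $\lesssim w_q(t)$ provided $\alpha>\alpha_c$, hence in particular when $\alpha>\tilde\alpha_c$. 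Taking the supremum over $q\in[2,\tilde p]$ gives the claim.

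Finally I would conclude in the standard way: for $\|u_1\|_{\mathcal{A}}$ sufficiently small, $N$ maps a small closed ball of $X$ into itself and is a contraction there, hence has a unique fixed point $u$; the $L^2\cap L^{\tilde p}$-valued continuity in time of the Duhamel integral is routine, so $u\in\mathcal{C}([0,\infty),L^2\cap L^{\tilde p})$, the asserted decay estimate is $\|u\|_X<\infty$ written out for each $q\in[2,\tilde p]$ (with $\log(1+t)$ replaced by the harmless $\log(2+t)$), and uniqueness in the stated class follows from the contraction together with local uniqueness. The main obstacle is the bookkeeping in the $[0,t/2]$ term: one must invoke the branch of \eqref{eq:L^1-L^p estimates} valid for $\|u(s)\|_{L^\alpha}$ when $\alpha<3$, which carries the extra factor $(1+s)^{\frac n4\beta(1,\alpha)}$ with $\beta(1,\alpha)=\frac3\alpha-1>0$, and it is exactly this factor that raises the threshold from $\alpha_c=1+4/n$ to $\tilde\alpha_c=2+2/n$ for $n=3,4$; the delicate point is to verify, uniformly in $q\in[2,\tilde p]$, that each time integral, once the linear weight $w_q(t)$ has been extracted, leaves only a finite constant rather than a growing power of $t$.
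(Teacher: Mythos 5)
Your proposal is correct and takes essentially the same approach as the paper: same fixed-point scheme in the same weighted solution space, same combination of $L^1-L^q$ estimates for large time differences and $L^2-L^q$ estimates for small ones, with the paper packaging the Duhamel bound via the $L^1\cap L^2-L^q$ estimate on all of $[0,t]$ together with Lemma \ref{Lem:Auxiliary-GlobalExistence}, which is exactly your explicit splitting at $t/2$ in disguise. (A minor slip: since $\beta(2,q)=\frac1q-\frac12$, the $(t-s)$-integral on $[t/2,t]$ yields the factor $t^{1-\frac n2\left(\frac12-\frac1q\right)}$ rather than $t^{1-\frac n4\left(\frac12-\frac1q\right)}$, though the resulting condition $\alpha>\alpha_c$ still holds for all $q\geq 2$, so your conclusion is unaffected.)
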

\begin{remark}
	When $n=3$ we can replace $\tilde{p}$ by $\infty$ in Theorem \ref{Thm:GlobalExistence2}. Moreover, if $u_1 \in L^1(\mathbb{R}^n)- L^{1+\epsilon}(\mathbb{R}^n)$ for any $\epsilon>0$, one can still get  a global (in time) small data Sobolev solution
in $ \mathcal{C}([0,\infty),  L^2(\mathbb{R}^3) \cap L^{3}(\mathbb{R}^3))$ to \eqref{Eq:MainProblem}
 for $\tilde \alpha_c(3)=\frac83<\alpha\leq 3$.
\end{remark}
	
\begin{remark}
	 We get the restriction $n\leq 4$ in Theorem \ref{Thm:GlobalExistence2} because $d_{\text{pl}}(1,\tilde{\alpha}_c)>1$ for $n\geq 5$. We also point out that $d_{\text{pl}}(1,2) = 1$ when $n =4$, so in this situation we need to use $L^r - L^2$ estimates for some $r \in (1,2]$.
\end{remark}

\section{Proof of Theorem \ref{Thm:1}}
\noindent

Let us apply partial Fourier transform with respect to the spatial variable $x$ to the linear problem \eqref{Eq:MainProblemLinear}. Then, $\widehat{u}(t,\xi) = \mathcal{F}_{x\rightarrow\xi}(u(t,x))$ solves
\begin{equation}\label{Eq:FourierTransMainProblem}
	\begin{cases}
	\widehat{u}_{tt} + (1+|\xi|^4)\widehat{u} = 0, & \,\,\, (t,\xi)\in (0,\infty)\times \mathbb{R}^n, \\
	\widehat{u}(0,\xi)=0, \quad \widehat{u}_t(0,\xi)=\widehat{u}_1(\xi), & \,\,\, \xi\in \mathbb{R}^n.
	\end{cases}
\end{equation}
The characteristic roots $\lambda_\pm(|\xi|)$ to the equation in \eqref{Eq:FourierTransMainProblem} are given by
\begin{equation*} \label{Eq:CharasteristicRoots}
\lambda_\pm(|\xi|) = \pm i\sqrt{1+|\xi|^4}\,.
\end{equation*}
The solution to \eqref{Eq:FourierTransMainProblem} is then given by
\begin{equation*} \label{Eq:RepresSolution}
\widehat{u}(t,\xi) = \underbrace{\frac{e^{\lambda_+(|\xi|)t}-e^{\lambda_-(|\xi|)t}}{\lambda_+(|\xi|)-\lambda_-(|\xi|)}}_{{=:\widehat{K}(t,|\xi|)}}\widehat{u}_1(\xi).
\end{equation*}
Our assignment in this part is to process estimates for the Fourier multiplier $\widehat{K}(t,|\xi|)$. Precisely, the kernel can be written by
\begin{equation} \label{Eq:kernel-K}
\widehat{K}(t,|\xi|) = \frac{\sin\big( \sqrt{1+|\xi|^4} \big)t}{\sqrt{1+|\xi|^4}}.
\end{equation}
Then, the solution to \eqref{Eq:MainProblemLinear} can be expressed by
\[ u(t,x) = \mathcal{F}_{\xi\rightarrow x}^{-1}\bigg( \frac{\sin\big( \sqrt{1+|\xi|^4} \big)t}{\sqrt{1+|\xi|^4}} \bigg)\ast_{(x)}u_1(x). \]

Let $\chi\in\mathcal{C}_c^\infty([0,\infty))$ with $\chi(\rho)=1$ for $\rho\in[0,1]$ and $\chi(\rho)=0$ for $\rho\geq 2$. Then, we divide our considerations in two cases, small and large frequencies as follows:
\[\widehat {K}_0(t,\cdot) = \chi(2|\xi|)\widehat {K}(t,\cdot), \qquad\ \widehat {K}_\infty(t,\cdot) = (1-\chi(|\xi|))\widehat {K}(t,\cdot).\]

\subsection{Estimates for high frequencies}
\noindent

Since at high frequencies  one may expect the same behaviour of the free plate equation, we apply the change of variables by $\xi = t^{-\frac{1}{2}}\eta$ in order to get
\[ \|\widehat{K}_\infty(t,\cdot)\|_{M_p^q}= t^{1-\frac n2\left(\frac1p-\frac1q\right)} \|m_\infty(t,\cdot)\|_{M_p^q},\]
where $m_{\infty}$ is the radial multiplier
\begin{equation*} \label{Eq:Multiplier-m}
m_\infty(t,\eta) = \frac{\sin\big( |\eta|^2\sqrt{1+(t/|\eta|^{2})^2} \big)}{|\eta|^2\sqrt{1+(t/|\eta|^2)^2}} \varphi_\infty(t,\eta) = \frac{\sin(\omega_\infty(t,|\eta|) )}{\omega_\infty(t,|\eta|)}  \varphi_\infty(t,\eta)
\end{equation*}
with
\[\omega_\infty(t,|\eta|):=|\eta|^2\sqrt{1+(t/|\eta|^2)^2},\]
and
\[\varphi_\infty(t,\eta)= (1-\chi)(t^{-\frac12}|\eta|).\]
Now, we divide again our considerations in two cases, new small and large frequencies as follows:
\begin{align*}
m_{\infty,0}(t,\eta) &= \chi(|\eta|)m_\infty(t,\eta), \\
m_{\infty,1}(t,\eta) &= \big( 1-\chi(|\eta|) \big)m_\infty(t,\eta).
\end{align*}
We notice that this division is effective only for $t \in (0,1)$, since $m_{\infty,1}(t, \eta) = m_\infty(t,\eta)$ for $t>1$.
\begin{prop} \label{Prop1:LargeFreq}
On the support of $\varphi_\infty(t,\rho)$ the following estimates hold:
\begin{align}
\label{EstimateLargeForOmegaAb} |\partial_\rho^\ell\omega_\infty(t,\rho)| &\leq C_\ell \rho^{2-\ell}, \qquad \ell\in \mathbb N, \\
\label{EstimateLargeForOmegaBe} |\partial_\rho^\ell\omega_\infty(t,\rho)| &\geq c_\ell \rho^{2-\ell}, \,\,\quad\,\,\,\,\,\, \ell=1,2, \\
\label{EstimateLargeForf} \big|\partial_\rho^\ell \frac 1{w_\infty(t,\rho)}\big| &\leq \widetilde{C}_\ell \rho^{-2-\ell}, \quad\,\,\,\, \ell\in \mathbb N .
\end{align}
\end{prop}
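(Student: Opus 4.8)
The plan is to first simplify the phase: note that $\omega_\infty(t,\rho) = \rho^2\sqrt{1+(t/\rho^2)^2} = \sqrt{\rho^4+t^2}$, and that $\varphi_\infty(t,\cdot)$ is supported exactly in $\{\rho \geq \sqrt t\}$ (since $(1-\chi)(t^{-1/2}\rho)$ vanishes for $t^{-1/2}\rho\leq 1$), on which $\rho^4 \leq \rho^4+t^2 \leq 2\rho^4$. This already gives $\rho^2 \leq \omega_\infty(t,\rho) \leq \sqrt2\,\rho^2$, i.e.\ the $\ell = 0$ case of \eqref{EstimateLargeForOmegaAb}--\eqref{EstimateLargeForOmegaBe}, and $(\sqrt2\,\rho^2)^{-1}\leq 1/\omega_\infty(t,\rho)\leq \rho^{-2}$, i.e.\ the $\ell=0$ case of \eqref{EstimateLargeForf}.

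The key reduction for the derivatives is a scaling argument. Put $\tau := \sqrt t$ and write $\rho = \tau\sigma$, so that on the support $\sigma \geq 1$ and $\omega_\infty(t,\tau\sigma) = \tau^2\sqrt{\sigma^4+1} = \tau^2 h(\sigma)$, where $h(\sigma) := \sqrt{\sigma^4+1}$. Differentiating in $\rho$ then yields $\partial_\rho^\ell\omega_\infty(t,\rho) = \tau^{2-\ell}h^{(\ell)}(\rho/\tau)$ and, similarly, $\partial_\rho^\ell\big(1/\omega_\infty(t,\rho)\big) = \tau^{-2-\ell}\tilde h^{(\ell)}(\rho/\tau)$ with $\tilde h(\sigma) := (\sigma^4+1)^{-1/2}$. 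Thus it suffices to establish, for the two \emph{fixed} functions $h,\tilde h$ on the \emph{fixed} region $\sigma\geq1$, the bounds $|h^{(\ell)}(\sigma)| \leq C_\ell\sigma^{2-\ell}$ for all $\ell$, $|h^{(\ell)}(\sigma)| \geq c_\ell\sigma^{2-\ell}$ for $\ell=1,2$, and $|\tilde h^{(\ell)}(\sigma)| \leq \widetilde C_\ell\sigma^{-2-\ell}$ for all $\ell$; substituting $\sigma=\rho/\tau$ back in and using $\tau^{2-\ell}(\rho/\tau)^{2-\ell}=\rho^{2-\ell}$ (resp.\ the $-2-\ell$ version) then reproduces \eqref{EstimateLargeForOmegaAb}--\eqref{EstimateLargeForf} verbatim.

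To prove the estimates on $h$ and $\tilde h$ I would write $h(\sigma) = \sigma^2 k(1/\sigma)$ and $\tilde h(\sigma) = \sigma^{-2}\tilde k(1/\sigma)$ with $k(u):=(1+u^4)^{1/2}$ and $\tilde k(u):=(1+u^4)^{-1/2}$ smooth on $[0,1]$ with all derivatives bounded there. Since each differentiation of $1/\sigma$ gains a factor $\sigma^{-1}$, the chain rule gives $|\partial_\sigma^j k(1/\sigma)| \lesssim \sigma^{-j}$ and $|\partial_\sigma^j \tilde k(1/\sigma)| \lesssim \sigma^{-j}$ on $\sigma\geq1$, and the Leibniz rule applied to $\sigma^{\pm2}k(1/\sigma)$, $\sigma^{-2}\tilde k(1/\sigma)$ then yields $|h^{(\ell)}(\sigma)| \lesssim \sigma^{2-\ell}$ and $|\tilde h^{(\ell)}(\sigma)| \lesssim \sigma^{-2-\ell}$. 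For the two lower bounds in \eqref{EstimateLargeForOmegaBe} I would instead compute directly: $h'(\sigma) = 2\sigma^3(\sigma^4+1)^{-1/2}$ and $h''(\sigma) = 2\sigma^2(\sigma^4+3)(\sigma^4+1)^{-3/2}$, and for $\sigma\geq1$ the estimates $\sigma^4+1 \leq 2\sigma^4$ and $\sigma^4+3\geq\sigma^4$ show these are bounded below by a positive constant times $\sigma$ and by a positive constant, respectively.

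The argument is essentially routine; the only point requiring a little care is the bookkeeping in the chain/Leibniz-rule step — checking that every correction term produced by differentiating $k(1/\sigma)$ is genuinely of lower order on $\sigma\geq1$ — which is immediate since differentiating $1/\sigma$ always costs an extra power $\sigma^{-1}$. Note also that the lower bound \eqref{EstimateLargeForOmegaBe} is claimed only for $\ell=1,2$, precisely the cases settled by the two explicit formulas above, so no lower bound on higher derivatives is needed.
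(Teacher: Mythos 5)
Your proof is correct, and it rests on the same basic observation the paper uses: $\omega_\infty(t,\rho)=\sqrt{\rho^4+t^2}$ and $t^2\le\rho^4$ on $\supp\varphi_\infty$ (your $\sigma\ge1$ after the substitution $\rho=\sqrt t\,\sigma$). For the lower bounds \eqref{EstimateLargeForOmegaBe} ($\ell=1,2$) your explicit formulas $h'(\sigma)=2\sigma^3(\sigma^4+1)^{-1/2}$ and $h''(\sigma)=2\sigma^2(\sigma^4+3)(\sigma^4+1)^{-3/2}$, together with $1\le(\sigma^4+1)/\sigma^4\le2$, are—after undoing the rescaling—precisely the paper's computation. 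Where you genuinely add something is in the all-$\ell$ upper bounds \eqref{EstimateLargeForOmegaAb} and \eqref{EstimateLargeForf}: the paper dispatches them with ``in the same manner one can prove,'' whereas your reduction, via $\rho=\sqrt t\,\sigma$, to the fixed, parameter-free functions $h(\sigma)=\sqrt{\sigma^4+1}$ and $\tilde h(\sigma)=(\sigma^4+1)^{-1/2}$ on the fixed set $\sigma\ge1$, followed by the factorization $h(\sigma)=\sigma^2k(1/\sigma)$, $\tilde h(\sigma)=\sigma^{-2}\tilde k(1/\sigma)$ and a Leibniz count, turns the claim into a clean homogeneity argument that works uniformly in $\ell$ and in $t$. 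One cosmetic remark: ``each differentiation of $1/\sigma$ gains a factor $\sigma^{-1}$'' slightly undersells the truth (one has $\partial_\sigma^a(1/\sigma)\sim\sigma^{-a-1}$, so by Fa\`a di Bruno $|\partial_\sigma^j k(1/\sigma)|\lesssim\sigma^{-j-1}$ for $j\ge1$); since you only need $\lesssim\sigma^{-j}$ on $\sigma\ge1$ this is harmless, and indeed your chain actually yields a marginally stronger bound than stated.
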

\begin{proof}
Observing that
\[\omega_\infty(t,|\eta|)= \sqrt{|\eta|^4+t^2},\]
and $t^2 \leq \rho^4$, we get
\begin{align*}
\partial_\rho\omega_\infty(t,\rho) &= \frac{2\rho^3}{\sqrt{\rho^4+t^2}} \geq \sqrt{2} \rho, \\
\partial_\rho^2\omega_\infty(t,\rho) &= \partial_{\rho} \Big( \frac{2\rho^3}{\sqrt{\rho^4+t^2}} \Big) =
\underbrace{\frac{2\rho^2}{\sqrt{\rho^4+t^2}}}_{\geq \sqrt{2}} \underbrace{\left( 3 - \frac{2\rho^4}{\rho^4+t^2} \right)}_{\geq 1} \geq c \nonumber.
\end{align*}
In the same manner one can prove \eqref{EstimateLargeForOmegaAb} and \eqref{EstimateLargeForf}.
\end{proof}
\begin{lemma} \label{Lemma:Estimate_m0infty}
For any $1\leq p\leq q\leq \infty$ and $t \in [0,1]$, we have
\begin{align*}
\|m_{\infty,0}(t,\cdot)\|_{M_p^q} \leq C.
\end{align*}
\end{lemma}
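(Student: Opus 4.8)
The strategy is to exploit the fact that $m_{\infty,0}(t,\cdot)$ is supported in a bounded annulus (roughly $t^{1/2} \lesssim |\eta| \lesssim 1$ intersected with $|\eta| \lesssim 1$, hence nonempty only for $t \in (0,1)$) and is, uniformly in $t \in [0,1]$, a smooth function with bounded derivatives up to sufficiently high order. Once this is established, one invokes the standard Mikhlin--H\"ormander multiplier theorem (or a Bernstein-type argument for compactly supported multipliers) to conclude $\|m_{\infty,0}(t,\cdot)\|_{M_p^p} \leq C$ for $1 < p < \infty$, and then upgrades to the full range $1 \leq p \leq q \leq \infty$ using that the multiplier is compactly supported: the inverse Fourier transform $\mathcal F^{-1} m_{\infty,0}(t,\cdot)$ is a Schwartz function with $L^1$ norm bounded uniformly in $t$, which gives $L^p - L^q$ boundedness for \emph{all} $1 \leq p \leq q \leq \infty$ by Young's inequality. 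Actually the cleanest route is to go directly for the uniform $L^1$ bound on the kernel and skip Mikhlin entirely.

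The key steps, in order, would be: \textbf{(i)} Record that on $\supp \varphi_\infty(t,\cdot) \cap \supp \chi(|\cdot|)$ one has $t^2 \leq |\eta|^4$ and $|\eta| \leq 2$, so by Proposition \ref{Prop1:LargeFreq} the phase $\omega_\infty(t,\rho) = \sqrt{\rho^4 + t^2}$ satisfies $\omega_\infty \gtrsim \rho^2$ from below and $|\partial_\rho^\ell \omega_\infty| \lesssim \rho^{2-\ell}$, together with $|\partial_\rho^\ell(1/\omega_\infty)| \lesssim \rho^{-2-\ell}$. \textbf{(ii)} Deduce from these, via Fa\`a di Bruno applied to $\sin(\omega_\infty(t,\rho))$, that on the support all derivatives $\partial_\rho^\ell \big(\sin(\omega_\infty)/\omega_\infty\big)$ are bounded; combined with the fact that $\rho$ ranges over the fixed compact set $[c,2]$ on the overlap of the two cutoffs (the lower bound $\rho \gtrsim t^{1/2}$ does not help for smallness but the cutoff $\chi(|\eta|)$ forces $\rho \lesssim 2$, and near $\rho = 0$ the factor $\varphi_\infty = (1-\chi)(t^{-1/2}|\eta|)$ vanishes), the cutoff functions and their derivatives are bounded uniformly in $t$. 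Hence $m_{\infty,0}(t,\cdot) \in C^\infty_c(\mathbb R^n)$ with all $C^N$ norms bounded by a constant independent of $t \in [0,1]$, and with support in a fixed ball. \textbf{(iii)} Conclude that $\|\mathcal F^{-1} m_{\infty,0}(t,\cdot)\|_{L^1(\mathbb R^n)} \leq C$ uniformly in $t$ (integrate the pointwise bound $|\mathcal F^{-1} m_{\infty,0}(t,x)| \lesssim (1+|x|)^{-n-1}$, which follows by integrating by parts $n+1$ times using the uniform $C^{n+1}$ bound and fixed compact support). \textbf{(iv)} Apply Young's convolution inequality: $\|m_{\infty,0}(t,\cdot)\|_{M_p^q} = \|\mathcal F^{-1} m_{\infty,0}(t,\cdot) \ast\,\|_{L^p \to L^q} \leq \|\mathcal F^{-1} m_{\infty,0}(t,\cdot)\|_{L^r}$ where $1 + 1/q = 1/p + 1/r$, and since $r \geq 1$ and the kernel is Schwartz with uniformly bounded Schwartz seminorms, $\|\mathcal F^{-1} m_{\infty,0}(t,\cdot)\|_{L^r} \leq C$ for all $r \in [1,\infty]$ uniformly in $t$.

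The main obstacle is purely bookkeeping: verifying that the derivatives of $m_{\infty,0}(t,\cdot)$ are bounded \emph{uniformly in $t \in [0,1]$}, because the region where $\varphi_\infty(t,\cdot)$ transitions from $0$ to $1$ shrinks as $t \to 0$, so a priori $\partial_\rho^\ell \varphi_\infty$ could blow up like $t^{-\ell/2}$. The resolution is that on this transition region one has $|\eta| \simeq t^{1/2}$, and the prefactor $\sin(\omega_\infty)/\omega_\infty$ together with its derivatives carry compensating powers: in fact $|\partial_\rho^\ell (1/\omega_\infty)| \lesssim \rho^{-2-\ell} \simeq t^{-(2+\ell)/2}$ is \emph{worse}, so one must instead use that $\sin(\omega_\infty)/\omega_\infty$ is an entire function of $\omega_\infty$ bounded with all derivatives when $\omega_\infty$ is bounded — and on the transition region $\omega_\infty \simeq t \to 0$ is small, so $\sin(\omega_\infty)/\omega_\infty \to 1$ smoothly and its $\rho$-derivatives involve $\partial_\rho \omega_\infty \simeq \rho \simeq t^{1/2}$, giving factors $t^{\ell/2}$ that exactly cancel the $t^{-\ell/2}$ from $\partial_\rho^\ell \varphi_\infty$. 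Making this cancellation precise — e.g. by the substitution $\eta = t^{1/2}\zeta$ on the transition region, under which $m_{\infty,0}$ becomes a fixed smooth compactly supported function of $\zeta$ — is the one place where care is required; everything else is a routine application of the multiplier/Young machinery.
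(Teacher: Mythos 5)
Your proposed route is genuinely different from the paper's: the paper first invokes H\"ormander's convexity estimate $\|m\|_{M_p^q}\leq C\min\{\|m\|_{M_p^p},\|m\|_{M_q^q}\}$, then treats the interior range by Mikhlin--H\"ormander (which only needs the $t$-uniform estimate $|\partial_\rho^k m_{\infty,0}|\lesssim\rho^{-k}$), and handles the endpoints $(1,\infty)$, $(1,1)$, $(\infty,\infty)$ separately; you instead aim for a single uniform $L^r$ bound on $\mathcal F^{-1}m_{\infty,0}(t,\cdot)$ for all $r\in[1,\infty]$ and conclude the whole range $1\le p\le q\le\infty$ at once by Young. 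That would indeed be cleaner. But the uniform $C^N$ bound on $m_{\infty,0}(t,\cdot)$ needed in your step (ii) is \emph{false}, and the cancellation you propose to rescue it does not occur. On the transition annulus $t^{1/2}\le\rho\le 2t^{1/2}$ (where $\chi(|\eta|)\equiv 1$ once $t$ is small), $\partial_\rho^\ell\varphi_\infty\simeq t^{-\ell/2}$, and in the Leibniz expansion of $\partial_\rho^\ell\big(\sinc(\omega_\infty)\,\varphi_\infty\big)$ the summand $\sinc(\omega_\infty)\cdot\partial_\rho^\ell\varphi_\infty$ has magnitude $\simeq t^{-\ell/2}$ with nothing to cancel it: the small factors from derivatives of $\sinc(\omega_\infty)$ appear only in the \emph{other} Leibniz terms, never in the $j=0$ term. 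Nor does the substitution $\eta=t^{1/2}\zeta$ alone deliver a fixed compactly supported function: $m_{\infty,0}(t,t^{1/2}\zeta)$ is supported in the growing annulus $1\le|\zeta|\le 2t^{-1/2}$.

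The missing idea is to split the localizer rather than search for cancellation: write $\varphi_\infty(t,\cdot)=1-\chi(t^{-1/2}|\cdot|)$, so that $m_{\infty,0}=A-B$ with $A(t,\eta)=\chi(|\eta|)\sinc(\omega_\infty(t,|\eta|))$ and $B(t,\eta)=\chi(|\eta|)\chi(t^{-1/2}|\eta|)\sinc(\omega_\infty(t,|\eta|))$. Because $\sinc$ is even and entire, $\sinc(\omega_\infty)=F(\rho^4+t^2)$ for some entire $F$, so $A(t,\cdot)$ is uniformly $\mathcal C^\infty_c$ on a fixed ball with $\|\mathcal F^{-1}A(t,\cdot)\|_{L^r}\lesssim 1$ for every $r\in[1,\infty]$. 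For $B$, when $t$ is small the factor $\chi(|\eta|)$ is inactive on $\supp\chi(t^{-1/2}|\cdot|)$, and after $\eta=t^{1/2}\zeta$ one gets $\chi(|\zeta|)F\big(t^2(|\zeta|^4+1)\big)$, a uniformly $\mathcal C^\infty_c$ function of $\zeta$, whence $\|\mathcal F^{-1}B(t,\cdot)\|_{L^r}\lesssim t^{\frac n2(1-\frac1r)}\lesssim 1$ for all $r\in[1,\infty]$. With this decomposition your steps (iii) and (iv) do go through. (Incidentally, the paper's treatment of the $M_1^1$ endpoint asserts $|\partial_\rho^k m_{\infty,0}|\le C_k$ without addressing the same blow-up on the shrinking transition annulus, so it too needs this kind of splitting, or a Littlewood--Paley argument, to be fully rigorous.)
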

\begin{proof}
Since $\chi(|\eta|) \in \mathcal{C}_c^\infty$ is independent of $t$, we get (see Theorem $1.8$ in \cite{Hormander_1960_estimates})
\[ \| m_{\infty, 0}(t,\cdot)\|_{M_p^q} \leq C\min\big\{ \|m_{\infty, 0}(t,\cdot)\|_{M_p^p}, \|m_{\infty, 0}(t,\cdot)\|_{M_q^q}  \big\}. \]
Let $(p,q)\notin\{ (1,1), (\infty,\infty), (1,\infty) \}$. We know that $|\sin\rho|\leq C\rho(1+\rho)^{-1}$, that is, $|\sin\rho| \leq \rho$ for small $\rho$ and $|\sin\rho| \leq 1$ for large $\rho$. Moreover, it holds
\[ |\partial_\rho^k\sinc\rho| \lesssim \rho^{-k}, \]
for $\rho \leq 1$, where $\sinc\rho := \rho^{-1}sin \ \rho$. Therefore, using the estimates \eqref{EstimateLargeForOmegaAb}, \eqref{EstimateLargeForf} from Proposition \ref{Prop1:LargeFreq} and chain rule we may conclude that
\[ |\partial_\rho^k\sinc\omega_{\infty}(t,\rho)| \lesssim \rho^{-k}. \]
Thus, by Mikhlin-H\"{o}rmander theorem (cf. Theorem \ref{Sec:App_Thm_Mikhlin-Hormander}) we find that the multiplier norm is bounded by a uniform constant with respect to $t$.
Now we consider $(p,q)=(1,\infty)$. We have
\begin{align*}
\|m_{\infty,0}(t,\cdot)\|_{M_1^\infty} &= \big\| \mathcal{F}_{\eta\to x}^{-1}\big(  \chi(|\eta|)\sinc \omega_\infty(t,|\eta|) \varphi_{\infty}(t,\eta)\big)\big\|_{L^\infty} \\
& \lesssim \big\| \int_{\mathbb{R}^n}e^{ix\cdot\eta}\chi(|\eta|)\sinc \omega_\infty(t,|\eta|) \varphi_{\infty}(t,\eta) d\eta \big\|_{L^\infty} \\
& \lesssim \int_{|\eta|\leq 1}|\sinc \omega_\infty(t,|\eta|)|d\eta\lesssim 1.
\end{align*}
Finally, we consider the case $(p,q)=(1,1)$ (and so by duality also $(p,q)=(\infty,\infty)$). In particular, thanks to $\mathscr{F}(L^1) \subset M_1^1$, it is enough to prove $\mathscr F^{-1} m_{\infty,0} \in L^1$. Since $m_{\infty,0}\in \mathcal{C}_c([0,1] \times \mathbb{R}^n)$, we have that
\[|\partial_\rho^k m_{\infty,0}| \leq C_k, \quad k \in \mathbb{N}.\]
Then, for any integer $N\geq 0$, by $N$ integration by parts
\begin{align*}
\left|\mathscr F^{-1} m_{\infty,0}\,(x) \right|= \left|(2\pi)^{-n} \int_{|\xi|\leq 1} e^{ix\cdot\xi}\,m(\xi)\,d\xi \right| \lesssim |x|^{-N} \int_{|\xi|\leq 1} \,d\xi \lesssim  |x|^{-N}.
\end{align*}
Choosing $N=0$ for $|x|<1$ and $N=n+1$ for $|x|\geq 1$, we can conclude the proof.
\end{proof}
\begin{prop} \label{Prop2:Hessian}
The determinant of the Hessian matrix satisfies the following estimate:
\[ |\det H_{\omega_\infty(t,|\eta|)}| \geq c. \]
\begin{proof}
We follow the proof of Proposition $5.1$ in \cite{DabbicoEbert2021JMAA}. We have
\[ \partial_{\eta_k}\partial_{\eta_j}(\omega_\infty(t,|\eta|)) = \big( \rho^{-1}\partial_\rho\omega_\infty\delta_j^k + \rho^{-1}\partial_\rho(\rho^{-1}\partial_\rho\omega_\infty)\eta_j \eta_k \big)_{\rho=|\eta|}. \]
Then, the Hessian matrix is
\[ H_{\omega_\infty} = \alpha I_n+\beta\eta\otimes\eta/|\eta|^2 \quad \text{with} \quad \alpha = |\eta|^{-1}\partial_\rho\omega_\infty\,, \,\,\, \beta = \partial_\rho^2\omega_\infty-\alpha, \]
where $I_n$ is the identity matrix and $\eta\otimes\eta$ is the matrix with entries $(\eta_k\eta_j)_{j,k}$. Finally, we arrive at
\[ \det H_{\omega_{\infty}} = (1+\beta\alpha^{-1})\alpha^n
=(\partial_\rho^2\omega_{\infty})(|\eta|^{-1}\partial_\rho\omega_{\infty})^{n-1} \geq c,
\]
due to estimates in \eqref{EstimateLargeForOmegaBe}.
\end{proof}
\end{prop}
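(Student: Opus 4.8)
The plan is to reduce the computation to the radial structure of $\omega_\infty(t,|\eta|)=\sqrt{|\eta|^4+t^2}$ together with the one-variable lower bounds \eqref{EstimateLargeForOmegaBe} already recorded in Proposition \ref{Prop1:LargeFreq}. First I would write down the Hessian of a radial function: differentiating $\omega_\infty(t,|\eta|)$ twice in $\eta$ and setting $\rho=|\eta|$ yields
\[ \partial_{\eta_j}\partial_{\eta_k}\omega_\infty(t,|\eta|)=\rho^{-1}\partial_\rho\omega_\infty\,\delta_{jk}+\rho^{-1}\partial_\rho\big(\rho^{-1}\partial_\rho\omega_\infty\big)\,\eta_j\eta_k, \]
so that $H_{\omega_\infty}=\alpha I_n+\beta\,\eta\otimes\eta/\rho^2$ with $\alpha=\rho^{-1}\partial_\rho\omega_\infty$ and $\beta=\partial_\rho^2\omega_\infty-\alpha$; this is the same reduction used in Proposition $5.1$ of \cite{DabbicoEbert2021JMAA}, which I would follow.

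The second step is to evaluate the determinant of this rank-one perturbation of a multiple of the identity. Either by the matrix determinant lemma, which gives $\det H_{\omega_\infty}=(1+\beta\alpha^{-1})\alpha^n$, or simply by observing that $\eta/\rho$ is an eigenvector with eigenvalue $\alpha+\beta=\partial_\rho^2\omega_\infty$ while its orthogonal complement is an $(n-1)$-dimensional eigenspace with eigenvalue $\alpha=\rho^{-1}\partial_\rho\omega_\infty$, one arrives at
\[ \det H_{\omega_\infty}=(\partial_\rho^2\omega_\infty)\,(\rho^{-1}\partial_\rho\omega_\infty)^{\,n-1}. \]

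Finally I would insert the lower bounds. On the support of $\varphi_\infty$ one has $t^2\le\rho^4$, and from the explicit formulas $\partial_\rho\omega_\infty=2\rho^3/\sqrt{\rho^4+t^2}$ and $\partial_\rho^2\omega_\infty=(2\rho^2/\sqrt{\rho^4+t^2})\big(3-2\rho^4/(\rho^4+t^2)\big)$ — the latter already appearing in the proof of Proposition \ref{Prop1:LargeFreq} — one reads off $\rho^{-1}\partial_\rho\omega_\infty\ge\sqrt2$ and $\partial_\rho^2\omega_\infty\ge\sqrt2$, whence $|\det H_{\omega_\infty}|\ge 2^{n/2}\ge c$. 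I expect no genuine obstacle in this argument: the only care needed is to keep track of the region $t^2\le\rho^4$ on which \eqref{EstimateLargeForOmegaBe} holds. Conceptually, the point worth stressing is that this non-degeneracy of the Hessian of the plate phase is exactly what fails for the wave and Klein-Gordon symbol $|\xi|$, and it is precisely what will later let a stationary-phase argument produce the extra decay appearing in \eqref{linearestimates}.
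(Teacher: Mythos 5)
Your proposal is correct and follows essentially the same route as the paper: reduce the Hessian of the radial phase to the rank-one perturbation $\alpha I_n+\beta\,\eta\otimes\eta/\rho^2$, evaluate the determinant as $(\partial_\rho^2\omega_\infty)(\rho^{-1}\partial_\rho\omega_\infty)^{n-1}$, and invoke the lower bounds from \eqref{EstimateLargeForOmegaBe}. The only additions you make (the eigenvalue interpretation of the determinant and the explicit constant $2^{n/2}$) are pleasant but not substantively different from what the paper does.
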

\begin{lemma} \label{LemmaLargeFreq}
We have the following estimates:
\begin{equation*}
\|m_{\infty,1}(t,\cdot)\|_{M_p^q} \leq C \qquad \text{ for } 1\leq p\leq q\leq \infty \text{ such that  } d_{\text{pl}}(p,q)<1,
\end{equation*}
for any $0<t<1$, and
\begin{equation*}
\|m_{\infty,1}(t,\cdot)\|_{M_p^q} \leq C t^{d_{\text{pl}}-1} \qquad \text{ for } 1\leq p\leq q\leq \infty \text{ such that  } d_{\text{pl}}(p,q)<1,
\end{equation*}
for any $t\geq 1$. Moreover, the estimates remain true for $1< p\leq 2 \leq  q< \infty$ such that $d_{\text{pl}}(p,q)=1$.
\end{lemma}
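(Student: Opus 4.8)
The plan is to estimate the radial multiplier $m_{\infty,1}(t,\cdot)$ via a dyadic decomposition in $|\eta|$, combining the stationary phase method with the structural bounds from Proposition \ref{Prop1:LargeFreq} and the uniform lower bound on the Hessian determinant from Proposition \ref{Prop2:Hessian}. Write $\sin(\omega_\infty) = \tfrac{1}{2i}(e^{i\omega_\infty} - e^{-i\omega_\infty})$, so the kernel on the high-frequency zone $|\eta| \gtrsim 1$ is a sum of two oscillatory integrals with phase $\pm\omega_\infty(t,|\eta|)$ and amplitude $\omega_\infty(t,|\eta|)^{-1}(1-\chi(|\eta|))\varphi_\infty(t,\eta)$. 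First I would fix a Littlewood--Paley piece supported in $|\eta| \simeq 2^k$ for $k \geq 0$ and estimate its $M_p^q$-norm. The amplitude obeys $|\partial_\rho^\ell \omega_\infty^{-1}| \lesssim \rho^{-2-\ell}$ by \eqref{EstimateLargeForf}, and on $|\eta|\simeq 2^k$ the phase has non-degenerate Hessian (determinant bounded below by a constant, by Proposition \ref{Prop2:Hessian}), while $|\partial_\rho\omega_\infty| \simeq 2^k$ and all higher derivatives satisfy the homogeneous-type bounds \eqref{EstimateLargeForOmegaAb}--\eqref{EstimateLargeForOmegaBe}. This is exactly the setting of the standard $L^p$--$L^q$ oscillatory integral estimate for radial Fourier multipliers (as used for the free plate kernel in \cite{Miyachi1981}, \cite{Ebert19} and the structurally damped case in \cite{DabbicoEbert2021JMAA}): stationary phase on the dyadic block of radius $2^k$ yields, after rescaling $\eta = 2^k\zeta$, a kernel decaying like $2^{-kn/2}$ in the zone $|x| \simeq 2^k \cdot 2^{k} = 2^{2k}$ coming from the stationary points, plus rapidly decaying tails elsewhere, giving a contribution to $\|m_{\infty,1,k}\|_{M_p^q}$ of order $2^{k(n(1/p-1/q) + n\max\{1/2-1/p,\,1/q-1/2\} - 2)} = 2^{2k(d_{\text{pl}}(p,q)-1)}$.

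Then I would sum over $k \geq 0$: since $d_{\text{pl}}(p,q) < 1$ the geometric series $\sum_{k\geq 0} 2^{2k(d_{\text{pl}}(p,q)-1)}$ converges, giving $\|m_{\infty,1}(t,\cdot)\|_{M_p^q} \leq C$ uniformly for $t\in(0,1)$ (recall $\varphi_\infty(t,\eta) = (1-\chi)(t^{-1/2}|\eta|)$ truncates to $|\eta|\gtrsim t^{1/2}$, which is harmless since $t<1$). For $t\geq 1$, one has $m_{\infty,1}=m_\infty$, and the factor $t^{d_{\text{pl}}-1}$ appears precisely from the interaction of the truncation $\varphi_\infty$ with the scaling: the variable $\eta$ has already absorbed a factor $t^{1/2}$ in the change of variables $\xi = t^{-1/2}\eta$ performed before this lemma, so the already-extracted prefactor $t^{1-\frac n2(\frac1p-\frac1q)}$ combined with the $M_p^q$-norm of $m_\infty$ — which scales like $t^{(d_{\text{pl}}(p,q)-1)+\frac n2(\frac1p-\frac1q)-1}$ — produces the claimed bound; concretely I would redo the dyadic sum keeping track of the lower cutoff $|\eta|\gtrsim t^{1/2}$ only affecting small $k$, which for $t\geq 1$ simply reweights the geometric series and yields $t^{d_{\text{pl}}-1}$. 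For the boundary case $1<p\leq 2\leq q<\infty$ with $d_{\text{pl}}(p,q)=1$, the geometric series is borderline divergent, so I would instead invoke analytic interpolation (Stein--Tomas-type / complex interpolation on an analytic family of operators obtained by inserting a factor $|\eta|^{z}$ into the amplitude), exactly as in the treatment of the endpoint for the free plate equation, using that the strict inequality holds in an open neighbourhood and that $p>1$, $q<\infty$ allow one to stay off the bad corners $(1,1)$, $(\infty,\infty)$, $(1,\infty)$.

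The main obstacle I anticipate is the bookkeeping at the endpoint $d_{\text{pl}}(p,q)=1$: the naive dyadic sum just fails to converge, and one must replace it by a genuine interpolation argument, being careful that the analytic family is bounded at the two endpoints of the interpolation strip — one endpoint requiring a better-than-$L^2$ oscillatory estimate (Hausdorff--Young plus stationary phase giving $L^2$-boundedness with a gain, which forces $p>1$, $q<\infty$) and the other a crude $L^1$ or $L^\infty$ bound. The second, more technical, point is tracking the precise power of $t$ in the regime $t\geq 1$: since the cutoff $\varphi_\infty(t,\eta)=(1-\chi)(t^{-1/2}|\eta|)$ removes frequencies $|\eta|\lesssim t^{1/2}$, and after the earlier rescaling one is no longer on a fixed dyadic ladder starting at $|\eta|\simeq 1$ but at $|\eta|\simeq t^{1/2}$, so I would carefully isolate the lowest relevant dyadic block and verify that its contribution dominates and equals (up to constants) $t^{d_{\text{pl}}(p,q)-1}$, which is exactly consistent with the claimed long-time decay $t^{-\frac n4(\frac1p-\frac1q-\beta(p,q))}$ in Theorem \ref{Thm:1} once recombined with the low-frequency part. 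All remaining steps — verifying the amplitude symbol bounds, applying Mikhlin--Hörmander on each dyadic block, and summing — are routine given Propositions \ref{Prop1:LargeFreq} and \ref{Prop2:Hessian}.
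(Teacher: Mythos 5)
Your overall architecture (dyadic decomposition, per-block oscillatory-integral estimate using the non-degenerate Hessian of $\omega_\infty$, geometric sum over blocks with the lower truncation supplying the $t^{d_{\text{pl}}-1}$ factor) matches the paper's, but there are two points worth flagging. First, your identification of the per-block $M_p^q$ exponent is numerically off: you wrote $n(\tfrac1p-\tfrac1q)+n\max\{\tfrac12-\tfrac1p,\tfrac1q-\tfrac12\}-2$, whereas $2(d_{\text{pl}}-1)=n(\tfrac1p-\tfrac1q)+2n\max\{\tfrac12-\tfrac1p,\tfrac1q-\tfrac12\}-2$; the conclusion $2^{2k(d_{\text{pl}}-1)}$ is nonetheless correct, and it is exactly the exponent $n(\tfrac1p+\tfrac1q-1)-2$ that the paper obtains on the reduced range $1\leq p\leq 2\leq q\leq p'$ (and its dual). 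The paper does not invoke a black-box stationary-phase estimate on each block: it runs a concrete three-point interpolation --- the $M_1^\infty$ bound $\lesssim 2^{-2k}$ from Littman's lemma, the $M_2^2$ bound $\lesssim 2^{-2k}$ from the trivial $L^\infty$ estimate of the multiplier, giving the conjugate line, and the $M_1^1$ bound $\lesssim 2^{k(n-2)}$ from Bernstein's theorem --- followed by Riesz--Thorin. That detail matters because the per-block $M_p^q$ estimate as $p,q$ move off the conjugate line is precisely what your folklore appeal glosses over. Second, and more substantively, for the endpoint $d_{\text{pl}}(p,q)=1$ with $1<p\leq2\leq q<\infty$ the paper does not use analytic interpolation or a Stein--Tomas-type argument; it uses the Besov embeddings $L^p\hookrightarrow B^0_{p,2}$ and $B^0_{q,2}\hookrightarrow L^q$ together with the uniform per-block bound, so the borderline geometric series is replaced by an $\ell^2$ square-sum of the dyadic pieces, which converges without a logarithm. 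This Littlewood--Paley route is considerably lighter than setting up an analytic family, and it is also why the restriction $p>1$, $q<\infty$ enters naturally (those Besov embeddings fail at the corners). Your proposal would likely work but is doing more than necessary at the endpoint; everything else is in agreement.
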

\begin{proof}
Since $\|\cdot\|_{M_p^q} = \|\cdot\|_{M_{q'}^{p'}}$, it is enough to prove Lemma \ref{LemmaLargeFreq} for $1 \leq p \leq 2$ and $p \leq q\leq p'$.

We are going to use Littman's lemma (cf. Lemma \ref{LittmanLemma}) to estimate the norm
\[ \|m_{\infty,1}(t,\cdot)\phi_k(\cdot)\|_{M_1^\infty} = \big\| \mathcal{F}_{\xi\to x}^{-1}\big[ \big( e^{i\omega_{\infty}(t,|\xi|)}-e^{-i\omega_{\infty}(t,|\xi|)} \big) \phi_k(\xi)f(t,|\xi|) \big] \big\|_{L^\infty}, \]
where
\[
f(t,|\xi|) = \frac{(1-\chi)(|\xi|) \varphi_{\infty}(t,|\xi|)}{\omega_{\infty}(t,|\xi|)}.
\]
%
%
Let $\eta := 2^{-k}\xi$ and $d\eta=2^{-kn}d\xi$. Then, we set $ w_\infty(t,2^k\eta):= 2^{2k}\widetilde w_\infty(t,\eta)$, with $\widetilde w_\infty(t,\eta):= w_\infty(2^{-2k}t,\eta)= \sqrt{t^22^{-4k}+|\eta|^{4}}$. The functions $\widetilde w_\infty$ and $(\widetilde w_\infty)^{-1}$ are uniformly bounded with respect to $t$ and $k$, together with their derivatives on the support of $\phi$. Hence,
\begin{align*}
\big\| \mathcal{F}_{\xi\to x}^{-1}\big[ \big( e^{i\omega_\infty(t,|\xi|)} &- e^{-i\omega_\infty(t,|\xi|)} \big) \phi_k(\xi)\frac 1{ w_\infty(t,\xi)} \big] \big\|_{L^\infty} \\
&= 2^{k(n-2)}\big\| \mathcal{F}_{\eta\to x}^{-1}\big[ e^{i2^{2k}\widetilde w_\infty(t,\eta)} \phi(|\eta|) \frac 1{\widetilde w_\infty(t,\eta)} \big] \big\|_{L^\infty} \\
& \leq C2^{k(n-2)}(1+2^{2k})^{-\frac{n}{2}} \approx 2^{-2k}.
\end{align*}
Thus, Young's inequality implies
\begin{align*}
\big\| \mathcal{F}_{\xi\to x}^{-1}\big( m_{\infty,1}(t,\xi)\phi_k(\xi)\mathcal{F}(u_1) \big) \big\|_{L^\infty} &= \big\| \mathcal{F}_{\xi\to x}^{-1}\big( m_{\infty,1}(t,\xi)\phi_k(\xi) \big)\ast u_1 \big\|_{L^\infty} \\
& \leq  \big\| \mathcal{F}_{\xi\to x}^{-1}\big( m_{\infty,1}(t,\xi)\phi_k(\xi) \big)\big\|_{L^\infty}\|u_1\|_{L^1} \\
& \leq C2^{-2k}\|u_1\|_{L^1},
\end{align*}
which allow us to conlcude that
\begin{equation} \label{Eq:LargeFreqNormM1infty}
\|m_{\infty,1}(\xi)\phi_k(|\xi|)\|_{M_1^\infty} \leq C2^{-2k}.
\end{equation}
Now using the fact that $M_2^2=L^\infty$, we obtain
\begin{align} \label{Eq:LargeFreqNormM22}
\|m_{\infty,1}(t,\cdot)\phi_k\|_{M_2^2} = \|m_{\infty,1}(t,\cdot)\phi_k\|_{L^\infty}\leq C2^{-2k}.
\end{align}
In this way, from \eqref{Eq:LargeFreqNormM1infty}, \eqref{Eq:LargeFreqNormM22} and Riesz-Thorin interpolation theorem (cf. Theorem \ref{Riesz-Thorin}) ($L^2-L^2$ and $L^1-L^\infty$) we get
\begin{equation*} \label{eq:conj_line estimate}
\|m_{\infty,1}(t,\cdot)\phi_k\|_{M_{p_0}^{q_0}}\leq C2^{-2k} \qquad \text{with} \qquad \frac{1}{p_0}+\frac{1}{q_0}=1.
\end{equation*}
Thanks to \eqref{EstimateLargeForOmegaAb}, we have
\[ \|\partial_\xi^\beta(m_{\infty,1}(t,\cdot)\phi_k)\|_{L^2} \leq C_{\beta} \Big( \int_{2^{k-1}\leq|\xi|\leq2^{k+1}}|\xi|^{-4+2|\beta|}d\xi \Big)^{\frac{1}{2}}\leq C_{n,\beta} 2^{k\left( |\beta|-2+\frac{n}{2} \right)}, \]
so, from Bernstein's Theorem (cf. Theorem \ref{Theorem:Bernstein}) we obtain the estimate
\begin{equation} \label{Eq:LargeFreqNormM11E}
\|m_{\infty,1}(t,\cdot)\phi_k\|_{M_1^1} \leq \|m_{\infty,1}(t,\cdot)\phi_k\|_{L^2}^{1-\frac{n}{2N}}\sum_{|\beta|=N}\|\partial_\xi^\beta\big( m_{\infty,1}(t,\cdot)\phi_k \big) \|_{L^2}^{\frac{n}{2N}} \leq C 2^{k(n-2)},
\end{equation}
where we choose $N>n/2$.

Using \eqref{Eq:LargeFreqNormM1infty}, \eqref{Eq:LargeFreqNormM11E} and again Riesz-Thorin interpolation theorem ($L^{p_0}-L^{q_0}$ and $L^1-L^1$), we conclude
\[ \|m_{\infty,1}(t,\cdot)\phi_k\|_{M_{p}^{q}} \leq C2^{k(n\theta-2)} = C2^{k\left( n\left(\frac{1}{p}+\frac{1}{q}-1\right)-2 \right)}, \]
where $0<\theta<1$, $\frac{1}{p}=\frac{1-\theta}{p_0}+\theta$ and $\frac{1}{q}=\frac{1-\theta}{q_0}+\theta$.

Now, we have that
\begin{align*}
    \|m_{\infty,1}(t,\cdot)\|_{M_p^q} \leq \sum_{k=k_1(t)}^\infty \|m_{\infty,1}(t,\cdot) \phi_k\|_{M_p^q},
\end{align*}
where $k_1(t)=\min{\{k \in \mathbb{N}\,:\, 2^{k+1} \geq t^\frac12 \}}.$ We notice that if $0<t<1$, then $k_1(t)=0$ and
\begin{align*}
     \|m_{\infty,1}(t,\cdot)\|_{M_p^q} \leq \sum_{k=0}^\infty \|m_{\infty,1}(t,\cdot) \phi_k\|_{M_p^q} \leq C \sum_{k=0}^\infty 2^{k\left( n\left(\frac{1}{p}+\frac{1}{q}-1\right)-2 \right)},
\end{align*}
which converges if and only if
\[ n\left(\frac{1}{p}+\frac{1}{q}-1\right)-2 <0,\]
or equivalently $d_{\text{pl}}<1.$ Otherwise, if $t \geq 1$, we have
\begin{align*}
     \|m_{\infty,1}(t,\cdot)\|_{M_p^q} \leq &\sum_{k=k_1(t)}^\infty \|m_{\infty,1}(t,\cdot) \phi_k\|_{M_p^q} \\
     & \leq  C\,2^{2(d_{\text{pl}}-1)k_1(t)}\leq C t^{d_{\text{pl}}-1},
\end{align*}
where we used that $d_{\text{pl}}<1$ and $k_1(t) \geq \log_2 t^\frac 12 -1.$ \\

Moreover, we can use embedding for Besov spaces to treat the situation where $d_{\text{pl}}=1$ and $1<p\leq 2\leq q < \infty$. Indeed, in this case we have
\begin{align*}
L^p 	\hookrightarrow B^0_{p, 2}, \quad B^0_{q,2}\hookrightarrow L^q.
\end{align*}
%
%
%
%
%
Then, it holds
\begin{align*}
\| \mathscr{F}^{-1}(m_{\infty,1}(t,\cdot)\widehat{f})\|_{L^q} \leq C_1 \| \mathscr{F}^{-1}(m_{\infty,1}(t,\cdot)\widehat{f})\|_{B^0_{q,2}} \leq C_2 \|f\|_{B^0_{p,2}} \leq C_3 \|f\|_{L^p},
\end{align*}
where the second inequality holds, since
\begin{align*}
 \|\mathscr{F}^{-1}(\phi_k m_{\infty,1}(t,\cdot)\widehat{f})\|_{L^q}^2  \leq C \sum\limits_{j=k-1}^{k+1} \|\mathscr{F}^{-1}(\phi_j \widehat{f})\|_{L^p}^2,
\end{align*}
so that
\begin{align*}
\| \mathscr{F}^{-1}(m_{\infty,1}(t,\cdot)\widehat{f})\|_{B^0_{q,2}}= \left( \sum\limits_{j=0}^{\infty} \|\mathscr{F}^{-1}(\phi_k m_{\infty,1}(t,\cdot)\widehat{f})\|_{L^q}^2 \right)^\frac12 \leq 3C  \left( \sum\limits_{k=0}^{\infty} \|\mathscr{F}^{-1}(\phi_j \widehat{f})\|_{L^p}^2 \right)^\frac12.
\end{align*}
\end{proof}

Summing up, we have proven the following lemma.

\begin{lemma}\label{high freq}
Let $1\leq p\leq q\leq \infty$. Then, we have
\begin{equation*}
 \|\widehat{K}_\infty(t,\cdot)\|_{M_p^q}\leq C \begin{cases} t^{1-\frac n2\left(\frac1p-\frac1q\right)  +d_{\text{pl}}-1}, & \ {\text for \ any } \  t\geq 1,\\
 t^{1-\frac n2\left(\frac1p-\frac1q\right)}, & \ t\in (0,1),
 \end{cases}
\end{equation*}
provided that $d_{\text{pl}}(p,q)<1$. Moreover, these estimates remain true for $1< p\leq 2 \leq  q< \infty$ such that $d_{\text{pl}}(p,q)=1$.
\end{lemma}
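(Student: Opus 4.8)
The plan is simply to assemble the two frequency–regime estimates already in hand — Lemma~\ref{Lemma:Estimate_m0infty} for $m_{\infty,0}$ and Lemma~\ref{LemmaLargeFreq} for $m_{\infty,1}$ — using the rescaling identity $\|\widehat{K}_\infty(t,\cdot)\|_{M_p^q} = t^{1-\frac n2(\frac1p-\frac1q)}\|m_\infty(t,\cdot)\|_{M_p^q}$ obtained from the substitution $\xi = t^{-1/2}\eta$, together with the decomposition $m_\infty(t,\cdot) = m_{\infty,0}(t,\cdot)+m_{\infty,1}(t,\cdot)$ and the triangle inequality for the $M_p^q$-norm. So the first step is to record
\[ \|\widehat{K}_\infty(t,\cdot)\|_{M_p^q} \le t^{1-\frac n2\left(\frac1p-\frac1q\right)}\big(\|m_{\infty,0}(t,\cdot)\|_{M_p^q} + \|m_{\infty,1}(t,\cdot)\|_{M_p^q}\big). \]

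Next I would treat $t\in(0,1)$: bound $\|m_{\infty,0}(t,\cdot)\|_{M_p^q}\le C$ by Lemma~\ref{Lemma:Estimate_m0infty} (valid for all $1\le p\le q\le\infty$) and $\|m_{\infty,1}(t,\cdot)\|_{M_p^q}\le C$ by the first estimate in Lemma~\ref{LemmaLargeFreq} (valid under $d_{\text{pl}}(p,q)<1$, or $d_{\text{pl}}(p,q)=1$ with $1<p\le2\le q<\infty$); adding these and multiplying by $t^{1-\frac n2(\frac1p-\frac1q)}$ gives the claimed small–time bound. Then I would treat $t\ge1$, where the point is that $m_{\infty,0}(t,\cdot)$ no longer contributes: on the support of $\varphi_\infty(t,\cdot)$ one has $|\eta|\ge t^{1/2}$, while $\chi(|\eta|)$ is supported in $|\eta|\le2$, so $m_{\infty,0}(t,\cdot)\equiv0$ once $t$ is large, and on the leftover bounded range its $M_p^q$-norm is still $\le C$ by Lemma~\ref{Lemma:Estimate_m0infty} and can be absorbed since $t^{d_{\text{pl}}-1}\simeq1$ there. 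Hence $\|m_\infty(t,\cdot)\|_{M_p^q}=\|m_{\infty,1}(t,\cdot)\|_{M_p^q}\le Ct^{d_{\text{pl}}-1}$ by the second estimate in Lemma~\ref{LemmaLargeFreq}, and multiplying by $t^{1-\frac n2(\frac1p-\frac1q)}$ finishes the case $t\ge1$. The boundary situation $d_{\text{pl}}(p,q)=1$, $1<p\le2\le q<\infty$ is already covered by both input lemmas, so no extra argument is needed.

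There is essentially no obstacle at this stage — this lemma is a bookkeeping summary of the high–frequency analysis. The only subtlety worth stating carefully is the vanishing (or uniform smallness on the compact range of $t\ge1$) of $m_{\infty,0}$, which is exactly what legitimizes the decay gain $t^{d_{\text{pl}}-1}$ in the large–time regime; all the genuine work (Littman's lemma, the Hessian nondegeneracy of Proposition~\ref{Prop2:Hessian}, and the Bernstein–Riesz–Thorin interpolation chain) was already spent in proving Lemma~\ref{LemmaLargeFreq}.
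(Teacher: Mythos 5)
Your proposal is correct and reproduces exactly the bookkeeping the paper intends by ``Summing up'': rescale via $\xi = t^{-1/2}\eta$, split $m_\infty = m_{\infty,0} + m_{\infty,1}$, and feed in Lemma~\ref{Lemma:Estimate_m0infty} and Lemma~\ref{LemmaLargeFreq}. You are in fact slightly more careful than the text — the paper asserts $m_{\infty,1} = m_\infty$ already for $t>1$, whereas the supports of $\chi(|\eta|)$ and $\varphi_\infty(t,\eta)$ only become disjoint for $t\geq 4$ — and your observation that $m_{\infty,0}$ vanishes for large $t$ and can be absorbed into the constant on the compact remaining range handles this cleanly.
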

%

\subsection{Estimates for small frequencies}

\noindent

It is clear that $\tilde K_0(t, \cdot)\in M_p^q$ for all $t\in [0,1]$ and for all $1\leq p\leq q\leq \infty$. Moreover,
\[\|\widehat{K}_0(t,\cdot)\|_{M_p^q}\leq C t, \quad t\in [0,1].\]
  To estimate $\|\widehat{K}_0(t,\cdot)\|_{M_p^q}$ for $t\geq 1$, we are going to use again the stationary phase method as in the previous subsection. So, in order to get uniformly estimates w.r.t. time for the phase function, we perform the change of variable $\eta=t^\frac14 \xi$. Thus, we obtain
\[ \|\widehat{K}_0(t,\cdot)\|_{M_p^q}= t^{-\frac n4\left(\frac1p-\frac1q\right)} \|m_0(t,\cdot)\|_{M_p^q}, \]
where we consider the radial multiplier
\begin{equation*} \label{Eq:Multiplier-m0}
	m_0(t,\eta) = \frac{\sin\big( t\sqrt{1+ t^{-1}|\eta|^{4}}\big)}{\sqrt{1+t^{-1}|\eta|^4)}} \varphi_0(t,\eta) = \frac{\sin(\omega(t,|\eta|) )}{\sqrt{1+t^{-1}|\eta|^4)}}  \varphi_0(t,\eta),
\end{equation*}
where
\[
\varphi_0(t,\eta)= \chi(t^{-\frac14}|\eta|), \quad \omega(t,\eta)=t \sqrt{1+t^{-1}|\eta|^4}.
\]
Now, we divide again our considerations in two cases, new small and large frequencies as follows:
\begin{align*}
	m_{0,0}(t,\eta) &= \chi(|\eta|)m_0(t,\eta), \\
	m_{0,1}(t,\eta) &= \big( 1-\chi(|\eta|) \big)m_0(t,\eta).
\end{align*}


\begin{prop} \label{Prop1:SmallFreq}
For $t^{-1}\rho^4 <1$ the following estimates hold:
\begin{align}
\label{EstimateSmallForOmegaAb} |\partial_\rho^\ell\omega(t,\rho)| &\leq C_\ell \rho^{4-\ell}, \qquad \ell\in \mathbb N\setminus\{0\}, \\
\label{EstimateSmallForOmegaBe} |\partial_\rho^\ell\omega(t,\rho)| &\geq c_\ell \rho^{4-\ell}, \,\,\qquad \ell=1,2, \\
\label{EstimateSmallForf2} \big|\partial_\rho^\ell \frac t{w(t,\rho)}\big| &\leq \widetilde{C}_\ell\rho^{-\ell}, \quad\,\,\,\,\,\,\,\,\, \ell\in \mathbb N .
\end{align}
\end{prop}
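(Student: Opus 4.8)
The plan is to prove Proposition \ref{Prop1:SmallFreq} by direct differentiation of $\omega(t,\rho)=t\sqrt{1+t^{-1}\rho^4}=\sqrt{t^2+t\rho^4}$, exploiting the constraint $t^{-1}\rho^4<1$, i.e. $\rho^4<t$, so that the term $t^2$ dominates $t\rho^4$ inside the square root. First I would compute $\partial_\rho\omega(t,\rho)=\frac{2t\rho^3}{\sqrt{t^2+t\rho^4}}=\frac{2\rho^3\sqrt{t}}{\sqrt{t+\rho^4}}$. Since $\rho^4<t$, we have $t\leq t+\rho^4\leq 2t$, hence $\frac{\sqrt{t}}{\sqrt{t+\rho^4}}\in[2^{-1/2},1]$, which immediately yields $\sqrt{2}\,\rho^3\leq\partial_\rho\omega(t,\rho)\leq 2\rho^3$, establishing the $\ell=1$ cases of both \eqref{EstimateSmallForOmegaAb} and \eqref{EstimateSmallForOmegaBe}. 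For $\ell=2$, differentiating once more gives $\partial_\rho^2\omega=\frac{2\rho^2\sqrt{t}}{\sqrt{t+\rho^4}}\bigl(3-\frac{2\rho^4}{t+\rho^4}\bigr)$, and since $\frac{2\rho^4}{t+\rho^4}\in[0,1)$ the bracket lies in $(2,3]$, so $\partial_\rho^2\omega\simeq\rho^2$, giving both the upper and lower $\ell=2$ bounds.

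For the higher derivatives $\ell\geq 3$ in \eqref{EstimateSmallForOmegaAb}, I would argue by induction, writing $\omega(t,\rho)=\sqrt{t}\,g(\rho^4/t)^{1/2}\cdot\sqrt{t}$ — more cleanly, set $s=\rho^4/t<1$ and note $\omega=t(1+s)^{1/2}$; then each $\rho$-derivative pulls down a factor that is a smooth function of $s$ on the compact interval $[0,1]$ times powers of $\rho$ and $t^{-1}$, and the homogeneity bookkeeping (each $\partial_\rho$ lowers the $\rho$-power by one) delivers the claimed $\rho^{4-\ell}$ scaling with constants depending only on $\ell$ and the sup-norms of the relevant derivatives of $(1+s)^{1/2}$ on $[0,1]$. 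Alternatively, and perhaps more transparently for the writeup, I would use the Faà di Bruno / Leibniz structure: $\partial_\rho^\ell\omega$ is a finite sum of terms $t^{2}\,\rho^{a}\,(t^2+t\rho^4)^{-b+1/2}$ with $a-4b+2\cdot 2=4-\ell$ appropriately balanced (here one tracks that each differentiation either hits a $\rho^{\,\cdot}$ factor or the $(t^2+t\rho^4)^{1/2-k}$ factor producing $\rho^3 t$), and then bound $(t^2+t\rho^4)^{1/2-b}$ using $t^2\leq t^2+t\rho^4\leq 2t^2$ to extract the correct power of $t$, which cancels against the explicit $t$'s, leaving exactly $\rho^{4-\ell}$.

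For \eqref{EstimateSmallForf2}, I would treat $\frac{t}{\omega(t,\rho)}=\frac{t}{\sqrt{t^2+t\rho^4}}=\frac{1}{\sqrt{1+t^{-1}\rho^4}}=(1+s)^{-1/2}$ with $s=\rho^4/t\in[0,1)$. Its $\rho$-derivatives, by the chain rule, are $(1+s)^{-1/2-j}$ times a polynomial in $s$ (bounded on $[0,1]$) times $\rho^{-\ell}$ coming from the $\ell$ applications of $\partial_\rho(\rho^4/t)=4\rho^3/t$ combined with the counting $\partial_\rho s^j\sim \rho^{4j-\ell}t^{-j}=s^j\rho^{-\ell}$; since $(1+s)^{-1/2-j}\leq 1$ and the $s$-polynomial is bounded, one obtains $|\partial_\rho^\ell(t/\omega)|\leq\widetilde C_\ell\rho^{-\ell}$.

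The main obstacle — such as it is — is purely bookkeeping: tracking in the induction for $\ell\geq 3$ that every term produced by differentiation carries exactly enough compensating powers of $t$ (hidden inside $(t^2+t\rho^4)^{\pm}$) so that the net $t$-dependence vanishes and only $\rho^{4-\ell}$ (resp. $\rho^{-\ell}$) survives, uniformly in $t$. This is handled cleanly by the substitution $s=\rho^4/t$, which makes the $t$-independence manifest: $\omega=t(1+s)^{1/2}$ and $t/\omega=(1+s)^{-1/2}$, and on the region of interest $s$ ranges over the fixed compact set $[0,1]$, so all $s$-derivatives of $(1+s)^{\pm 1/2}$ are uniformly bounded. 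We follow essentially the computation pattern already used in the proof of Proposition \ref{Prop1:LargeFreq}, with the role of ``$\rho^2$'' there played by ``$\rho^4$'' here. The remaining estimates \eqref{EstimateSmallForOmegaAb}, \eqref{EstimateSmallForf2} then follow in the same manner, as indicated.
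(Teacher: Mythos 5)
Your proof is correct and follows essentially the same route as the paper: direct differentiation of $\omega(t,\rho)=t\sqrt{1+t^{-1}\rho^4}$ and of $t/\omega=(1+t^{-1}\rho^4)^{-1/2}$, using $t^{-1}\rho^4<1$ to absorb the remaining $t$-dependence. The paper simply computes $\partial_\rho\omega$ and $\partial_\rho^2\omega$ explicitly and handles higher derivatives and $t/\omega$ in one terse line via crude Fa\`a di Bruno bounds; your substitution $s=\rho^4/t\in[0,1)$, which renders $\omega=t(1+s)^{1/2}$ and $t/\omega=(1+s)^{-1/2}$, is a somewhat cleaner way to organize the same bookkeeping.
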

\begin{proof}
We may estimate
\begin{align*}
\partial_\rho\omega(t,\rho) &= \frac{2\rho^3}{\sqrt{t^{-1}\rho^4+1}} \approx \rho^3, \\
\partial_\rho^2\omega(t,\rho) &= \partial_{\rho} \Big( \frac{2\rho^3}{\sqrt{t^{-1}\rho^4+1}} \Big) = \frac{6\rho^2}{\sqrt{t^{-1}\rho^4+1}}-\frac{4\rho^6 t^{-1}}{(t^{-1}\rho^4+1)\sqrt{t^{-1}\rho^4+1}}\approx \rho^2.
\end{align*}
In the same way, we conclude \eqref{EstimateSmallForOmegaAb}.
On the other hand, we have
\[ \big|\partial_\rho^\ell \frac 1{w(t,\rho)}\big|\lesssim \frac{w^{(\ell)}}{w^2} + \frac{(w')^\ell}{w^{\ell+1}} \lesssim t^{-2}\rho^{4-\ell} + t^{-\ell-1}\rho^{3\ell},\]
and so, using $t^{-1}\rho^4 <1$, we obtain \eqref{EstimateSmallForf2} and this concludes the proof.
\end{proof}

\begin{prop}
The determinant of the Hessian matrix satisfies the following estimate:
\[ |\det H_{\omega(t,|\eta|)}| \geq \rho^{2n}. \]
\begin{proof}
Following the proof of Proposition \ref{Prop2:Hessian} and due to estimates \eqref{EstimateSmallForOmegaBe}, we arrive at
\[ \det H_\omega = (1+\beta\alpha^{-1})\alpha^n = (\partial_\rho^2\omega)(|\eta|^{-1}\partial_\rho\omega)^{n-1} \geq \rho^{2n}. \]
\end{proof}\end{prop}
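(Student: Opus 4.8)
The plan is to reduce the computation of $\det H_{\omega(t,|\eta|)}$ to a one–dimensional one, exactly along the lines of the proof of Proposition \ref{Prop2:Hessian}. Since $\omega(t,\cdot)$ depends on $\eta$ only through $\rho=|\eta|$, the chain rule gives
\[ \partial_{\eta_j}\partial_{\eta_k}\omega(t,|\eta|) = \Big( \rho^{-1}\partial_\rho\omega\,\delta_j^k + \rho^{-1}\partial_\rho\big(\rho^{-1}\partial_\rho\omega\big)\,\eta_j\eta_k \Big)_{\rho=|\eta|}, \]
so that $H_{\omega}=\alpha I_n+\beta\,\eta\otimes\eta/|\eta|^2$ with $\alpha=|\eta|^{-1}\partial_\rho\omega$ and $\beta=\partial_\rho^2\omega-\alpha$. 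First I would record the elementary identity $\det(\alpha I_n+\beta\,v\otimes v)=\alpha^{n-1}(\alpha+\beta)$ valid for any unit vector $v$ (the matrix has eigenvalue $\alpha+\beta$ on $v$ and eigenvalue $\alpha$ with multiplicity $n-1$ on $v^\perp$), which yields
\[ \det H_{\omega} = \alpha^{n-1}(\alpha+\beta) = (|\eta|^{-1}\partial_\rho\omega)^{n-1}\,\partial_\rho^2\omega. \]

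Next I would invoke the two-sided control of the phase from Proposition \ref{Prop1:SmallFreq}: on the region $t^{-1}\rho^4<1$, which contains the support of $\varphi_0$ and hence of $m_0$, the bounds in \eqref{EstimateSmallForOmegaBe} give $\partial_\rho\omega(t,\rho)\gtrsim\rho^{3}$ and $\partial_\rho^2\omega(t,\rho)\gtrsim\rho^{2}$; moreover, from the explicit formulas computed in the proof of Proposition \ref{Prop1:SmallFreq} one has $\partial_\rho\omega>0$ and $\partial_\rho^2\omega>0$ there, so $\alpha^{n-1}>0$ and no cancellation occurs in the displayed product. Substituting these lower bounds gives
\[ |\det H_{\omega(t,|\eta|)}| = (|\eta|^{-1}\partial_\rho\omega)^{n-1}\,\partial_\rho^2\omega \gtrsim (\rho^{2})^{n-1}\,\rho^{2} = \rho^{2n}, \]
which is the claimed estimate.

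I do not expect any genuine obstacle here: the whole content sits in the radial reduction of the Hessian and in the lower bounds \eqref{EstimateSmallForOmegaBe}, both already available. The only point that deserves a word of care is that one must stay in the range $t^{-1}\rho^4<1$, since outside of it neither the lower bounds of Proposition \ref{Prop1:SmallFreq} nor the positivity of $\partial_\rho^2\omega$ are guaranteed; but this is precisely the range on which $m_0$ is supported, so it is exactly what is needed for the subsequent stationary phase / Littman-type estimate of $\widehat{K}_0(t,\cdot)$.
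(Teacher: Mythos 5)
Your proof is correct and follows essentially the same route as the paper: reduce the Hessian of the radial phase $\omega$ to the rank-one perturbation $\alpha I_n+\beta\,\eta\otimes\eta/|\eta|^2$, compute its determinant as $(|\eta|^{-1}\partial_\rho\omega)^{n-1}\partial_\rho^2\omega$, and then apply the lower bounds from \eqref{EstimateSmallForOmegaBe}. The only cosmetic difference is that you write the determinant identity as $\alpha^{n-1}(\alpha+\beta)$ rather than the paper's $(1+\beta\alpha^{-1})\alpha^n$, which avoids dividing by $\alpha$, and you spell out the positivity check that the paper leaves implicit.
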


\begin{lemma} \label{Lemma:Estimate_m00}
For any $1\leq p\leq q\leq \infty$, we have
\begin{align*}
\|m_{0,0}(t,\cdot)\|_{M_p^q} \leq C.
\end{align*}
\end{lemma}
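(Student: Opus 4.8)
The plan is to mirror the argument of Lemma \ref{Lemma:Estimate_m0infty}. Since $\chi(|\eta|)\in\mathcal{C}_c^\infty(\mathbb{R}^n)$ and, for every $t$, the function $m_{0,0}(t,\cdot)$ is supported in the fixed compact set $\{|\eta|\le 2\}$, Theorem $1.8$ in \cite{Hormander_1960_estimates} reduces the claim to proving $\|m_{0,0}(t,\cdot)\|_{M_p^p}\le C$ for each $p\in[1,\infty]$, uniformly in $t$. A first simplification: since $\omega(t,\rho)=t\sqrt{1+t^{-1}\rho^4}$, one has $(1+t^{-1}\rho^4)^{-1/2}=t/\omega(t,\rho)$, so
\[ m_{0,0}(t,\eta)=\chi(|\eta|)\,\varphi_0(t,\eta)\,t\,\sinc(\omega(t,|\eta|)), \]
and in particular $|m_{0,0}(t,\eta)|\le t/\omega(t,|\eta|)\le 1$ because $\omega\ge t$; this already gives $\|m_{0,0}(t,\cdot)\|_{M_2^2}=\|m_{0,0}(t,\cdot)\|_{L^\infty}\le 1$. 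Moreover, for $1\le t\le 16$ the maps $t\mapsto m_{0,0}(t,\cdot)$ form a continuous family of $\mathcal{C}_c^\infty$ functions over a compact parameter set, hence have uniformly bounded $M_p^q$ norms; so from now on we may assume $t\ge 16$, in which case $\varphi_0(t,\eta)=1$ on $\{|\eta|\le 2\}$ and $m_{0,0}(t,\eta)=\chi(|\eta|)\,t\,\sinc(\omega(t,|\eta|))$.

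The heart of the matter is the uniform bound
\begin{equation}\label{eq:uniform_deriv_m00}
|\partial_\eta^\beta m_{0,0}(t,\eta)|\le C_\beta \qquad \text{for all }\beta,\ t\ge 16,\ |\eta|\le 2.
\end{equation}
This is \emph{not} immediate from Proposition \ref{Prop1:SmallFreq}, where $\partial_\rho^\ell\omega$ is only controlled by $\rho^{4-\ell}$, which degenerates as $\rho\to 0$. The idea is to exploit the size $\omega\ge t$ together with the decay $|\sinc^{(j)}(s)|\le C_j(1+|s|)^{-1}$ (elementary, as $\sinc^{(j)}$ is a combination of $s^{-m}\sin s,\,s^{-m}\cos s$ with $1\le m\le j+1$) in order to absorb this growth. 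Writing $u:=t^{-1}|\eta|^4$, we have $0\le u\le 16$ and $\partial_\eta^\beta u=t^{-1}\partial_\eta^\beta(|\eta|^4)$, so $|\partial_\eta^\beta u|\le C_\beta t^{-1}$ for $1\le|\beta|\le 4$ and $\partial_\eta^\beta u=0$ for $|\beta|\ge 5$. By Fa\`a di Bruno, using $(1+u)^{1/2-j}\le 1$ for $j\ge 1$, this gives $|\partial_\eta^\beta(1+u)^{1/2}|\le C_\beta t^{-1}$ for $|\beta|\ge 1$, hence $|\partial_\eta^\beta\omega(t,|\eta|)|=t\,|\partial_\eta^\beta(1+u)^{1/2}|\le C_\beta$ for $|\beta|\ge 1$. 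A further application of Fa\`a di Bruno to $t\,\sinc(\omega(t,|\eta|))$, together with $t\,|\sinc^{(j)}(\omega)|\le C_j\,t/\omega\le C_j$, yields $|\partial_\eta^\beta(t\,\sinc(\omega(t,|\eta|)))|\le C_\beta$; multiplying by the smooth compactly supported function $\chi(|\eta|)$ and using the Leibniz rule produces \eqref{eq:uniform_deriv_m00}.

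Granted \eqref{eq:uniform_deriv_m00}, the endpoints are handled directly. For $(p,q)=(1,\infty)$,
\[ \|m_{0,0}(t,\cdot)\|_{M_1^\infty}\le\|\mathcal{F}^{-1}m_{0,0}(t,\cdot)\|_{L^\infty}\le\int_{|\eta|\le 2}|m_{0,0}(t,\eta)|\,d\eta\le C. \]
For $(p,q)=(1,1)$ (hence $(\infty,\infty)$ by duality), the bound \eqref{eq:uniform_deriv_m00} on the fixed support $\{|\eta|\le 2\}$ and $N$-fold integration by parts give $|\mathcal{F}^{-1}m_{0,0}(t,\cdot)(x)|\le C_N(1+|x|)^{-N}$ for every $N$, whence $\|\mathcal{F}^{-1}m_{0,0}(t,\cdot)\|_{L^1}\le C$ uniformly; since $\mathscr{F}(L^1)\subset M_1^1$ we obtain $\|m_{0,0}(t,\cdot)\|_{M_1^1}\le C$. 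Interpolating between the $M_1^1$ and $M_\infty^\infty$ bounds (cf. Theorem \ref{Riesz-Thorin}) gives $\|m_{0,0}(t,\cdot)\|_{M_p^p}\le C$ for all $p\in[1,\infty]$ (the intermediate cases can alternatively be obtained directly from the Mikhlin--H\"ormander theorem, cf. Theorem \ref{Sec:App_Thm_Mikhlin-Hormander}, exactly as in Lemma \ref{Lemma:Estimate_m0infty}), and the first step then closes the proof. The main obstacle is precisely \eqref{eq:uniform_deriv_m00}: the radial estimates of Proposition \ref{Prop1:SmallFreq} are too crude near the origin, and one must instead use the explicit form $\omega=t\sqrt{1+t^{-1}|\eta|^4}$ and the lower bound $\omega\ge t$.
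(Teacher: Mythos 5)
Your proof is correct and follows the paper's overall route: reduce to $M_p^p$ via H\"ormander's Theorem $1.8$, treat the endpoints $(1,\infty)$ and $(1,1)$ directly, and obtain the rest by interpolation or Mikhlin--H\"ormander. Where your version genuinely improves on the paper's is the key input, namely uniform $C^\infty$ bounds for $m_{0,0}(t,\cdot)$. The paper works with $|\partial_\rho^k\sin\omega|\lesssim\rho^{4-k}$ from Proposition \ref{Prop1:SmallFreq}, which is adequate for the Mikhlin--H\"ormander step (only $[n/2]+1$ derivatives are needed and the resulting bound is exactly $\rho^{-k}$), but degenerates near $\rho=0$ once $k\ge5$; and in the $(1,1)$ case the paper remarks only that the estimate on $t/\omega$ ``may be improved'' to $C_k$, leaving implicit the same issue for $\sin\omega$ when many derivatives are required (one needs $N>n$ for the integration-by-parts argument, so the issue is real in dimensions $n\ge4$). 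You identified this and supplied a clean fix: $u=t^{-1}|\eta|^4$ is a degree-$4$ polynomial with all nonzero $\eta$-derivatives bounded by $Ct^{-1}$, Fa\`a di Bruno then gives $|\partial_\eta^\beta\omega|\le C_\beta$ for $|\beta|\ge1$, and combining with $t|\sinc^{(j)}(\omega)|\lesssim t/\omega\le1$ yields the uniform bound $|\partial_\eta^\beta m_{0,0}(t,\eta)|\le C_\beta$ for $t\ge16$, the regime $t\in[1,16]$ being handled by a separate compactness observation. Same route as the paper, but your version is the more careful one and explicitly closes a gap that the paper glosses over.
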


\begin{proof}
Since $\chi\in \mathcal{C}_c^\infty$ is independent of $t$, we get (see Theorem $1.8$ in \cite{Hormander_1960_estimates})
\[ \| m_{0,0}(t,\cdot)\|_{M_p^q} \leq C\min\big\{ \|m_{0,0}(t,\cdot)\|_{M_p^p}, \|m_{0,0}(t,\cdot)\|_{M_q^q}  \big\}. \]
Let $(p,q)\notin\{ (1,1), (\infty,\infty), (1,\infty) \}$. Using estimates \eqref{EstimateSmallForOmegaAb} and \eqref{EstimateSmallForf2}, we have
\[ |\partial_\rho^k  \sin w(t,\rho)| \leq  \rho^{4-k},\quad k\geq 1, \]
for $\rho \leq 1$. Then, from Leibniz rule we may conclude that ($\rho < 1$)
\[ |\partial_\rho^k \sin w(t,\rho)\, t/w(t,\rho)| \lesssim \sum_{1\leq \ell \leq k} \rho^{4-\ell}\rho^{-k+\ell} + C \rho^{-k} \lesssim \rho^{-k}. \]
Thus, by Mikhlin-H\"{o}rmander theorem (cf. Theorem \ref{Sec:App_Thm_Mikhlin-Hormander}) we find that the multiplier norm is bounded by a uniform constant with respect to $t$.

Now we consider $(p,q)=(1,\infty)$. Then, we have
\begin{align*}
\|m_{0,0}(t,\cdot)\|_{M_1^\infty} &= \big\| \mathcal{F}_{\eta\to x}^{-1}\big(  \chi(|\eta|)\sin \omega(t,|\eta|)/\sqrt{1+t^{-1}|\eta|^4} \big)\big\|_{L^\infty} \\
& \lesssim \int_{|\eta|\leq 1}|\sin\omega(t,|\eta|)|d\eta\lesssim 1.
\end{align*}
Finally we consider the case $(p,q)=(1,1)$. Using $\rho<1$, $t>1$ and the smoothness of the multiplier, we notice that we may improve estimate \eqref{EstimateSmallForf2}, obtaining
\[\big|\partial_\rho^k \frac t{w(t,\rho)}\big| \leq C_k,\]
so that we may estimate
\[ |\partial_\rho^k m_{0,0}(t,\rho)| \lesssim \tilde{C}_k. \]
So, we may conclude the proof with integration by parts argument, as in the proof of Lemma \ref{Lemma:Estimate_m0infty}.
\end{proof}

\begin{lemma} \label{LemmaSmallFreq1}
We have the following estimates:
\begin{equation*}
\|m_{0,1}(t,\cdot)\|_{M_p^q} \leq C t^{\frac{n}{4}\beta(p,q)} \qquad \text{ for } 1\leq p\leq q\leq \infty,
\end{equation*}
for any $t\geq 1$, where $\beta=\beta(p,q)$ is defined in Theorem \ref{Thm:1}. 
\end{lemma}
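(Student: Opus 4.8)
The plan is to proceed by dyadic decomposition exactly as in the proof of Lemma \ref{LemmaLargeFreq}, but now tracking carefully the dependence on $t$ that arises because on the support of $m_{0,1}$ we have $1 \leq |\eta| \lesssim t^{1/4}$. First I would write $m_{0,1}(t,\cdot) = \sum_{k=0}^{k_0(t)} m_{0,1}(t,\cdot)\phi_k$, where $k_0(t) \simeq \frac14\log_2 t$ is the largest $k$ for which $\phi_k$ meets $\{|\eta|\leq 2\, t^{1/4}\}$. For each dyadic block set $\eta = 2^k\zeta$ so that $|\zeta|\simeq 1$; then $\omega(t,2^k\zeta) = t\sqrt{1 + t^{-1}2^{4k}|\zeta|^4} =: t\,\widetilde\omega_k(t,\zeta)$, and the amplitude $(1+t^{-1}|\eta|^4)^{-1/2}$ becomes $(1+t^{-1}2^{4k}|\zeta|^4)^{-1/2} =: a_k(t,\zeta)$, which together with its $\zeta$-derivatives is uniformly bounded on $\supp\phi$ (here one uses $t^{-1}2^{4k}\lesssim 1$). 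The Hessian of the phase $t\,\widetilde\omega_k$ in $\zeta$ has determinant bounded below, after rescaling, by a constant times $(t\cdot 2^{-2k}\cdot\text{something})$; more precisely, by the Hessian proposition for $\omega$ one gets $|\det H_{\omega(t,\cdot)}(\eta)|\gtrsim |\eta|^{2n}$, so by Littman's lemma (Lemma \ref{LittmanLemma}) applied to the rescaled integral,
\[
\big\|\mathcal F^{-1}\big[(e^{i\omega}-e^{-i\omega})\phi_k (1+t^{-1}|\eta|^4)^{-1/2}\big]\big\|_{L^\infty} \lesssim 2^{kn}\,(1+ t\,2^{-4k})^{-n/2},
\]
where the factor $t\,2^{-4k}$ is the "curvature $\times$ frequency-localization" quantity governing the decay of the oscillatory integral. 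This yields the building-block bound $\|m_{0,1}(t,\cdot)\phi_k\|_{M_1^\infty}\lesssim 2^{kn}(1+t2^{-4k})^{-n/2}$.

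Next I would record the other two endpoint bounds for a single block, as in Lemma \ref{LemmaLargeFreq}: the $L^2\!-\!L^2$ bound $\|m_{0,1}(t,\cdot)\phi_k\|_{M_2^2}=\|m_{0,1}(t,\cdot)\phi_k\|_{L^\infty}\lesssim 1$ (using $|\sin\omega|\leq 1$ and the amplitude bound), and the $L^1\!-\!L^1$ bound via Bernstein's theorem, $\|m_{0,1}(t,\cdot)\phi_k\|_{M_1^1}\lesssim 2^{kn}$ (estimating $\|\partial_\xi^\beta(m_{0,1}\phi_k)\|_{L^2}$ using Proposition \ref{Prop1:SmallFreq} to control derivatives of $\sin\omega$ and of the amplitude on $\supp\phi_k$). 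Interpolating the $M_1^\infty$ bound with the $M_2^2$ bound gives the conjugate-line estimate $\|m_{0,1}(t,\cdot)\phi_k\|_{M_{p_0}^{p_0'}}\lesssim 2^{kn/p_0'}(1+t2^{-4k})^{-n/(2p_0')}$ for $1\leq p_0\leq 2$ with $1/p_0+1/p_0'=1$ (treating the $L^1\!-\!L^1$ bound too when one needs $q<p'$, exactly as in the previous lemma). By the symmetry $\|\cdot\|_{M_p^q}=\|\cdot\|_{M_{q'}^{p'}}$ it suffices to handle $1\leq p\leq 2$, $p\leq q\leq p'$.

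The final and most delicate step is summing the dyadic pieces: $\|m_{0,1}(t,\cdot)\|_{M_p^q}\leq \sum_{k=0}^{k_0(t)} \|m_{0,1}(t,\cdot)\phi_k\|_{M_p^q}$. Each summand is a product of a growing power $2^{k\theta_1}$ and a decaying factor $(1+t2^{-4k})^{-\theta_2}$ with exponents $\theta_1,\theta_2$ determined by $(p,q)$ through the interpolation above. The sum is therefore governed by its largest term; depending on the sign of the net exponent one either gets the top contribution at $k=0$ (giving an $O(1)$ bound, corresponding to the region where $\beta(p,q)=0$) or the top contribution near $k=k_0(t)\simeq\frac14\log_2 t$, which produces the power $t^{\frac n4\beta(p,q)}$. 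I expect the bookkeeping that identifies the crossover lines $\frac1p+\frac1q=1$ and $\frac1p+\frac3q=2$ (equivalently $\frac3p+\frac1q=2$ by duality) with the two branches of $\beta(p,q)$ to be the real work: one must carefully track how the exponent $\theta_1 k - 4\theta_2(k - \frac14\log_2 t)$ behaves, check that at the endpoint $k_0(t)$ the amplitude factor $(1+t2^{-4k_0})^{-\theta_2}\simeq 1$ so it is indeed the boundary block that dominates, and verify that the resulting exponent of $t$ equals $\frac n4\beta(p,q)$ in each case. The logarithmic losses on the exceptional lines $(p,q)=(1,3)$ and $(3/2,\infty)$ (where the net exponent vanishes and the sum has $\simeq\log t$ terms of equal size) come out of this same computation. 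Once the geometric/arithmetic sum is evaluated on each sub-region of the $(1/p,1/q)$ square, the claimed estimate follows.
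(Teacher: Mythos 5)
Your overall strategy---dyadic decomposition, block estimates in $M_1^\infty$, $M_2^2$, $M_1^1$ via Littman, triviality and Bernstein, interpolation, and summation up to $k_0(t)\simeq\tfrac14\log_2 t$---is exactly the route the paper takes, and your geometric picture (crossover lines $\tfrac1p+\tfrac1q=1$, $\tfrac1p+\tfrac3q=2$, with logarithmic loss at $(1,3)$ and $(3/2,\infty)$) is correct. However, two of your endpoint block bounds are quantitatively wrong, and both errors propagate to an incorrect final exponent, so the argument as written does not close.

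The Littman bound is the serious one. You claim $\|m_{0,1}(t,\cdot)\phi_k\|_{M_1^\infty}\lesssim 2^{kn}(1+t2^{-4k})^{-n/2}$, as if the large parameter in the stationary-phase estimate were $t2^{-4k}$. It is not: after rescaling $\xi=2^k\zeta$, the natural factorization is $\omega(t,2^k\zeta)=2^{4k}\widetilde w(t,\zeta)$ with $\widetilde w(t,\zeta)=\sqrt{(t2^{-4k})^2+(t2^{-4k})|\zeta|^4}$. On $\supp m_{0,1}$ one has $s:=t2^{-4k}\gtrsim 1$, and a direct computation gives $\partial_\rho\widetilde w=\tfrac{2\sqrt s\,\rho^3}{\sqrt{s+\rho^4}}\simeq\rho^3$ and $\partial_\rho^2\widetilde w=\tfrac{2\sqrt s\,\rho^2(3s+\rho^4)}{(s+\rho^4)^{3/2}}\simeq\rho^2$ on $\supp\phi$, uniformly in $s\ge 1$. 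Hence $|\det H_{\widetilde w}|\simeq 1$ uniformly in $t,k$, and Littman's lemma with parameter $\tau=2^{4k}$ gives $\|\mathcal F^{-1}[e^{\pm i2^{4k}\widetilde w}\phi\,a]\|_{L^\infty}\lesssim (1+2^{4k})^{-n/2}$, whence $\|m_{0,1}\phi_k\|_{M_1^\infty}\lesssim 2^{kn}(1+2^{4k})^{-n/2}\simeq 2^{-kn}$, \emph{independent of $t$}. Your bound is off by a factor $2^{4kn}t^{-n/2}$; e.g.\ at $k=0$ you would predict decay $t^{-n/2}$ for the block $|\xi|\simeq 1$, whereas the phase there is $\sqrt{t^2+t|\xi|^4}\approx t+\tfrac12|\xi|^4$ and yields no $t$-decay at all. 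The same discrepancy is visible after interpolation: your bounds give, say at $(p,q)=(1,\infty)$, a sum growing like $t^{n/4}$, while the correct answer is $O(1)$ (here $\beta(1,\infty)=0$).

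The $M_1^1$ bound is also off: you claim $2^{kn}$, but on $\supp(m_{0,1}\phi_k)$ one has $|\partial_\rho\omega|\simeq\rho^3$, so Fa\`a di Bruno gives $|\partial_\rho^\ell m_{0,1}|\lesssim\rho^{3\ell}$ and thus $\|\partial^\beta(m_{0,1}\phi_k)\|_{L^2}\lesssim 2^{k(3|\beta|+n/2)}$; Bernstein then yields $\|m_{0,1}\phi_k\|_{M_1^1}\lesssim 2^{2kn}$, not $2^{kn}$. With the corrected endpoints $M_1^\infty\lesssim 2^{-kn}$, $M_2^2\lesssim 1$, $M_1^1\lesssim 2^{2kn}$, Riesz--Thorin gives the clean block estimate $\|m_{0,1}\phi_k\|_{M_p^q}\lesssim 2^{kn(\frac1p+\frac3q-2)}$ for $\frac1p+\frac1q\ge 1$, and the dyadic sum over $0\le k\le k_0(t)$ is a plain geometric series producing $t^{\frac n4\beta(p,q)}$ (bounded, logarithmic, or dominated by $k_0$ according to the sign of $\frac1p+\frac3q-2$), plus Besov embeddings on the threshold line exactly as you anticipate. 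So the structure of your argument is right, but the two block bounds above need to be corrected before the bookkeeping at the end can yield $\beta(p,q)$.
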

\begin{proof}
Since $\|\cdot\|_{M_p^q} = \|\cdot\|_{M_{q'}^{p'}}$ it is enough to prove Lemma \ref{LemmaSmallFreq1} for $1 \leq p \leq 2$ and $p \leq q\leq p'$.

We are going to use Littman's lemma (cf. Lemma \ref{LittmanLemma}) in order to find an estimate to the norm
\[ \|m_{0,1}(t,\cdot)\phi_k\|_{M_1^\infty} = \left\| \mathcal{F}_{\xi\to x}^{-1}\left[ \big( e^{i\omega(t,|\xi|)}-e^{-i\omega(t,|\xi|)} \big) \phi_k(\xi)\frac {\varphi_0(t,\eta)(1-\chi)(\eta)}{\sqrt{1+t^{-1}|\xi|^4}} \right] \right\|_{L^\infty}. \]
%
%

Let $\eta := 2^{-k}\xi$ and $d\eta=2^{-kn}d\xi$.
Then, we set $ w(t,2^k\eta):= 2^{4k}\widetilde w(t,\eta)$, with
$$
\widetilde w(t,\eta) := w(2^{-4k}t,\eta)=\sqrt{t^2 2^{-8k}+t 2^{-4k}|\eta|^{4}}.
$$
Thus, we may conclude
\begin{align*}
\big\| \mathcal{F}_{\xi\to x}^{-1}\big[ \big( e^{i\omega(t,|\xi|)} &- e^{-i\omega(t,|\xi|)} \big) \phi_k(\xi)\frac 1{\sqrt{1+t^{-1}|\xi|^4}} \big] \big\|_{L^\infty} \\
&\lesssim 2^{kn}\big\| \mathcal{F}_{\eta\to x}^{-1}\big[ e^{i2^{4k}\widetilde w(t,\eta)} \phi(|\eta|) \frac 1{\sqrt{1+t^{-1}|2^k\eta|^4}}\big] \big\|_{L^\infty} \\
& \leq C2^{kn}(1+2^{4k})^{-\frac{n}{2}} \approx 2^{-kn},
\end{align*}
due to the fact that the derivatives  of the function $\widetilde w$ are uniformly bounded with respect to $t$ and $k$ on the support of $\phi$ and also the amplitude function $(1+t^{-1}|2^k\eta|^4)^{-1/2}$ is uniformly bounded with respect to t and k, together with all its $\eta-$derivatives on the support of $\phi$.

From Young's inequality we get
\begin{align*}
\big\| \mathcal{F}_{\xi\to x}^{-1}\big( m_{0,1}(t,\xi)\phi_k(\xi)\mathcal{F}(u_1) \big) \big\|_{L^\infty} &= \big\| \mathcal{F}_{\xi\to x}^{-1}\big( m_{0,1}(t,\xi)\phi_k(\xi) \big)\ast u_1 \big\|_{L^\infty} \\
& \leq  \big\| \mathcal{F}_{\xi\to x}^{-1}\big( m_{0,1}(t,\xi)\phi_k(\xi) \big)\big\|_{L^\infty}\|u_1\|_{L^1} \\
& \leq C2^{-kn}\|u_1\|_{L^1},
\end{align*}
which allows to conclude the following:
\begin{equation} \label{Eq:SmallFreqNormM1infty_1}
\|m_{0,1}(\xi)\phi_k(|\xi|)\|_{M_1^\infty} \leq C2^{-kn}.
\end{equation}
Using the fact that $M_2^2=L^\infty$, we obtain
\begin{align} \label{Eq:SmallFreqNormM22_1}
\|m_{0,1}(t,\cdot)\phi_k\|_{M_2^2} = \|m_{0,1}(t,\cdot)\phi_k\|_{L^\infty}\leq C.
\end{align}
Now, using \eqref{Eq:SmallFreqNormM1infty_1}, \eqref{Eq:SmallFreqNormM22_1} and Riesz-Thorin interpolation theorem (cf. Theorem \ref{Riesz-Thorin}) ($L^2-L^2$ and $L^1-L^\infty$) we obtain
\begin{equation*} \label{eq:conj_line estimate2}
\|m_{0,1}(t,\cdot)\phi_k\|_{M_{p_0}^{q_0}}\leq C2^{-kn \big(1-\frac 2q_0\big)} \qquad \text{with} \qquad \frac{1}{p_0}+\frac{1}{q_0}=1.
\end{equation*}
Finally, since we may estimate
%
%
\begin{align*}
     |\partial_\rho^\ell sin(w(t,\rho)) | \lesssim \rho^{3\ell},\\
     |\partial_\rho^\ell m_{0,1}(t,\rho) | \lesssim \rho^{3\ell},
\end{align*}
we have
\begin{equation*}
\|m_{0,1}(t,\cdot)\phi_k\|_{L^2}\lesssim 2^{kn/2}.
\end{equation*}
Hence, from Bernstein's Theorem (cf. Theorem \ref{Theorem:Bernstein}) we get
\begin{equation} \label{Eq:SmallFreqNormM11_1}
\|m_{0,1}(t,\cdot)\phi_k\|_{M_1^1} \leq \|m_{0,1}(t,\cdot)\phi_k\|_{L^2}^{1-\frac{n}{2N}}\sum_{|\beta|=N}\|\partial_\xi^\beta\big( m_{0,1}(t,\cdot)\phi_k \big) \|_{L^2}^{\frac{n}{2N}} \leq C 2^{2kn}.
\end{equation}
Using \eqref{Eq:SmallFreqNormM1infty_1}, \eqref{Eq:SmallFreqNormM11_1} and again Riesz-Thorin interpolation theorem ($L^{p_0}-L^{q_0}$ and $L^1-L^1$), we conclude
\[ \|m_{0,1}(t,\cdot)\phi_k\|_{M_{p}^{q}} \leq C2^{kn\left( \frac{1}{p}+\frac{3}{q}-2 \right)}, \quad \frac{1}{p} + \frac{1}{q} \geq 1. \]
%
Now, we have that
\begin{align*}
    \|m_{0,1}(t,\cdot)\|_{M_p^q} \leq \sum_{k=0}^{k_0(t)}\|m_{0,1}(t,\cdot) \phi_k\|_{M_p^q},
\end{align*}
where $k_0(t)=\max{\{k \in \mathbb{Z}\,:\, 2^{k-1} \leq t^\frac14 \}}.$
Hence, we have
\begin{align*}
     \|m_{0,1}(t,\cdot)\|_{M_p^q} \leq \sum_{k=0}^{k_0(t)}\|m_{0}(t,\cdot) \phi_k\|_{M_p^q} \leq C \sum_{k=0}^{k_0(t)} 2^{kn\left( \frac{1}{p}+\frac{3}{q}-2 \right)},
\end{align*}
which converges (and with an estimate independent of time) if and only if
\[ \frac{1}{p}+\frac{3}{q}-2 <0.\]

In the situation where $\frac{1}{p}+\frac{3}{q}-2 =0$ we may use embeddings for Besov spaces as in the previous section. Let $1<p\leq 2\leq q < \infty$ (the line $\frac{1}{p}+\frac{3}{q}-2 =0$ in the $\frac 1p-\frac1q$ plane is localized in the zone $ 1\leq p\leq 2\leq q\leq 3$). Since
\begin{align*}
L^p 	\hookrightarrow B^0_{p, \max\{p,2\}}, \quad B^0_{q,\min\{2,q\}}\hookrightarrow L^q,
\end{align*}
we have
\begin{align*}
\| \mathscr{F}^{-1}(m_{0,1}(t,\cdot)\widehat{f})\|_{L^q} \leq C_1 \| \mathscr{F}^{-1}(m_{0,1}(t,\cdot)\widehat{f})\|_{B^0_{q,2}} \leq C_2 \|f\|_{B^0_{p,2}} \leq C_3 \|f\|_{L^p},
\end{align*}
where the second inequality holds, since
\begin{align*}
 \|\mathscr{F}^{-1}(\psi_k m_{0,1}(t,\cdot)\widehat{f})\|_{L^q}^2  \leq C \sum\limits_{j=k-1}^{k+1} \|\mathscr{F}^{-1}(\psi_j \widehat{f})\|_{L^p}^2,
\end{align*}
so that
\begin{align*}
\| \mathscr{F}^{-1}(m_{0,1}(t,\cdot)\widehat{f})\|_{B^0_{q,2}}= \left( \sum\limits_{k=0}^{k_0} \|\mathscr{F}^{-1}(\psi_k m_{0,1}(t,\cdot)\widehat{f})\|_{L^q}^2 \right)^\frac12 \leq 3C  \left( \sum\limits_{k=0}^{k_0} \|\mathscr{F}^{-1}(\psi_k \widehat{f})\|_{L^p}^2 \right)^\frac12.
\end{align*}
In the case~$p=1$ and~$q=3$, using only the second embedding, we obtain
\begin{align*}
&\| \mathscr{F}^{-1}(m_{0,1}(t,\cdot)\widehat{f})\|_{L^q} \leq C_1 \| \mathscr{F}^{-1}(m_{0,1}(t,\cdot)\widehat{f})\|_{B^0_{q,\min\{2,3\}}} \leq C_2 \left( \sum\limits_{k=0}^{k_0} \|\mathscr{F}^{-1}(\psi_k \widehat{f})\|_{L^1}^{2} \right)^\frac1{2}\\
&\leq C_3  \|f\|_{L^1}\left( \sum\limits_{k=0}^{k_0} 1 \right)^\frac1{2} \leq  C_4 (\log t)^\frac1{2}\|f\|_{L^1}.
\end{align*}
Finally, we assume $\beta(p,q) = \frac{1}{p}+\frac{3}{q}-2 >0$. In this case, we may estimate
\begin{align*}
\|m_{0,1}(t,\cdot)\|_{M_p^q}\leq \frac{2^{\{k_0(t)+1\}n\beta(p,q)}-1}{2^{n\beta(p,q)}-1} \lesssim 2^{k_0(t)n\beta(p,q)} \approx t^{\frac{n\beta(p,q)}{4}}.
\end{align*}
%
\end{proof}
Summing up, we have proven the following result.
\begin{lemma}\label{low freq}
Let $1\leq p\leq q\leq \infty$. Then, setting $\beta(p,q)$ as in Theorem \ref{Thm:1}, we have
\begin{equation*}
\|\widehat{K}_0(t,\cdot)\|_{M_p^q}\leq C \begin{cases} t^{-\frac n4\left(\frac1p-\frac1q\right)+\beta(p,q)\frac{n}{4}}, & \ {\text for \ any } \  t\geq 1,\\
t, & \ t\in [0,1),
 \end{cases}
\end{equation*}
provided that in the case $(p,q)=(1,3)$ or $(p,q)=(3/2,\infty)$ with $d_{\text{pl}}< 1$, estimate \eqref{low freq} is replaced by
\[ \|\widehat{K}_0(t,\cdot)\|_{M_p^q}\leq C  t^{-\frac n4 \left(\frac1p-\frac1q\right)} (\log t)^\frac12,\]
for any $t \geq 1$.
\end{lemma}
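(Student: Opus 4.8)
The plan is simply to assemble the pieces already established, since Lemma~\ref{low freq} is a bookkeeping statement whose whole content lies in Lemmas~\ref{Lemma:Estimate_m00} and~\ref{LemmaSmallFreq1} together with the rescaling identity recorded at the head of this subsection.

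First I would treat the range $t\in[0,1)$. Factoring out the time, write $\widehat K_0(t,\xi)=t\,g_t(\xi)$ with $g_t(\xi):=\chi(2|\xi|)\,\sinc\big(t\sqrt{1+|\xi|^4}\big)$. On $\supp\chi(2|\cdot|)$ and for $t\in[0,1]$ the argument $t\sqrt{1+|\xi|^4}$ stays in a fixed bounded set, so $g_t$ and all its $\xi$-derivatives are bounded uniformly in $t$ and supported in a fixed compact set; hence $\mathcal F^{-1}g_t$ is a Schwartz function with $\|\mathcal F^{-1}g_t\|_{L^r}\le C_r$ uniformly in $t$, for every $r\in[1,\infty]$. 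Young's inequality with $\tfrac1r=1-\tfrac1p+\tfrac1q\in(0,1]$ then gives $\|\widehat K_0(t,\cdot)\|_{M_p^q}\le C\,t$ for all $1\le p\le q\le\infty$, which is the second branch of the claimed estimate.

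For $t\ge1$ I would apply the change of variables $\eta=t^{1/4}\xi$ together with the scaling law $\|m(\lambda\,\cdot)\|_{M_p^q}=\lambda^{-n(1/p-1/q)}\|m\|_{M_p^q}$ (with $\lambda=t^{1/4}$), which turns $\widehat K_0(t,\cdot)$ into $m_0(t,\cdot)$ and yields $\|\widehat K_0(t,\cdot)\|_{M_p^q}=t^{-\frac n4(\frac1p-\frac1q)}\|m_0(t,\cdot)\|_{M_p^q}$. Splitting $m_0=m_{0,0}+m_{0,1}$ and invoking Lemma~\ref{Lemma:Estimate_m00} ($\|m_{0,0}(t,\cdot)\|_{M_p^q}\le C$) and Lemma~\ref{LemmaSmallFreq1} ($\|m_{0,1}(t,\cdot)\|_{M_p^q}\le C\,t^{\frac n4\beta(p,q)}$, with the stated $(\log t)^{1/2}$ loss at $(p,q)=(1,3)$), one obtains $\|m_0(t,\cdot)\|_{M_p^q}\le C\big(1+t^{\frac n4\beta(p,q)}\big)$. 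On the region $\tfrac1p+\tfrac1q\ge1$, where $\beta(p,q)=[\tfrac1p+\tfrac3q-2]_+\ge0$, the contribution of $m_{0,1}$ dominates and the stated bound $t^{-\frac n4(\frac1p-\frac1q)+\frac n4\beta(p,q)}$ (with the extra logarithm at $(1,3)$) follows immediately; the complementary region $\tfrac1p+\tfrac1q\le1$ is then recovered from it through the duality $\|\widehat K_0(t,\cdot)\|_{M_p^q}=\|\widehat K_0(t,\cdot)\|_{M_{q'}^{p'}}$ — legitimate since $\widehat K_0(t,\cdot)$ is real and radial — using $\tfrac1{q'}-\tfrac1{p'}=\tfrac1p-\tfrac1q$ and the behaviour of $\beta$ under $(p,q)\mapsto(q',p')$; note that $(3/2,\infty)$ is exactly the dual of the exceptional point $(1,3)$.

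I do not expect a genuine obstacle, since all the hard analysis — Littman's lemma, the nondegeneracy of the Hessian of $\omega$, the Bernstein estimate and the Besov-space endpoint argument — has already been carried out in Lemmas~\ref{Lemma:Estimate_m00} and~\ref{LemmaSmallFreq1}. The one step that requires a line of care is the combination above: one must check that the harmless bound $\|m_{0,0}(t,\cdot)\|_{M_p^q}\le C$ is absorbed into the factor $t^{\frac n4\beta(p,q)}$ throughout the admissible range of $(p,q)$, which is why the argument is arranged so that on the subtriangle $\tfrac1p+\tfrac1q\le1$ one passes to the dual exponents rather than adding the two estimates directly.
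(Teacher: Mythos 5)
Your proof matches the paper's: the paper's entire justification for this lemma is ``Summing up, we have proven the following result'' after Lemmas~\ref{Lemma:Estimate_m00} and~\ref{LemmaSmallFreq1}, and you fill in the intended steps (the elementary bound for $t\in[0,1)$, the rescaling $\eta=t^{1/4}\xi$, the decomposition $m_0=m_{0,0}+m_{0,1}$, and the two cited lemmas). That is the right argument.

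On the duality detour for $\tfrac1p+\tfrac1q\le1$: you introduce it to avoid adding the constant $\|m_{0,0}\|_{M_p^q}\le C$ to a possibly decaying $t^{\frac n4\beta}$, since the displayed formula in Theorem~\ref{Thm:1} gives $\beta(p,q)=-[\tfrac3p+\tfrac1q-2]_-\le0$ on that region. But the step you then invoke --- ``the behaviour of $\beta$ under $(p,q)\mapsto(q',p')$'' --- does not go through with that formula: computing directly, $\beta(q',p')=\big[\tfrac1{q'}+\tfrac3{p'}-2\big]_+=\big[2-\tfrac3p-\tfrac1q\big]_+=-\beta(p,q)$, so the dual estimate yields $t^{\frac n4\beta(q',p')}=t^{-\frac n4\beta(p,q)}$, not $t^{\frac n4\beta(p,q)}$. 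What this reveals is a sign slip in the statement rather than a gap in your reasoning: since $\|\widehat K_0(t,\cdot)\|_{M_p^q}=\|\widehat K_0(t,\cdot)\|_{M_{q'}^{p'}}$ and $\tfrac1{q'}-\tfrac1{p'}=\tfrac1p-\tfrac1q$, the bound in the lemma must be dual-invariant, which forces $\beta(q',p')=\beta(p,q)$ and hence $\beta(p,q)=\big[\tfrac3p+\tfrac1q-2\big]_-$ (without the leading minus) on $\tfrac1p+\tfrac1q\le1$; this corrected reading also reproduces the rate $t^{\frac n2-\frac np}$ in Figure~1 and is exactly what the duality reduction in the proof of Lemma~\ref{LemmaSmallFreq1} presumes. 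With it, $\beta\ge0$ on the whole square, so $1+t^{\frac n4\beta}\lesssim t^{\frac n4\beta}$ for $t\ge1$ without any case distinction, and the plain sum of the two lemmas already gives the claim for all $1\le p\le q\le\infty$. Your duality step is then harmless, but the contingency it was guarding against does not in fact occur.
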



\begin{proof}[Proof of Theorem \ref{Thm:1}]
Due to
\[ 1-\frac n2\left(\frac1p-\frac1q\right)  +d_{\text{pl}}-1\leq -\frac n4\left(\frac1p-\frac1q\right) + \beta(p,q)\frac{n}{4}, \]
the proof of  Theorem \ref{Thm:1} follows by Lemma \ref{high freq} and  Lemma \ref{low freq}.
\end{proof}


\section{Proof of local and global in time existence results}
\noindent

Let us first introduce some notations which will be used in the proof of Theorems \ref{local}, \ref{Thm:GlobalExistence1} and \ref{Thm:GlobalExistence2}. Let $u^{\lin}(t,x)$ be the solution to \eqref{Eq:MainProblemLineargeneral} and $K=K(t,x)$ be the fundamental solution to the linear Cauchy problem \eqref{Eq:MainProblemLinear}.
Let now $N$ be the operator
\[ N: u\in X(T) \rightarrow Nu(t,x) =  u^{\lin}(t,x)+u^{\nl}(t,x), \]
where $X(T)$ is a suitable space where we search for solutions and 
\begin{align*}
	u^{\nl}(t,x) &= \int_0^tK(t-\tau,x)\ast_{(x)}|u(\tau,x)|^\alpha d\tau.
\end{align*}
Thanks to Duhamel's principle, the solution of \eqref{Eq:MainProblem} is given by the fixed point of the operator $N$. To prove that $N$ admits a fixed point, we are going to show that the mapping $N$ satisfies the following estimates:
	\begin{align}
		\label{Eq:inequality-Nu} \|Nu\|_{X(T)} &\leq C_0\|(u_0, u_1)\|_{\mathcal{A}} + C_1(t)\|u\|_{X(T)}^{\alpha}, \\
		\label{Eq:inequality-NuNv} \|Nu-Nv\|_{X(T)} &\leq C_2(t)\|u-v\|_{X(T)}\Big( \|u\|_{X(T)}^{\alpha-1}  + \|v\|_{X(T)}^{\alpha-1} \Big),
	\end{align}
where $\mathcal{A}$ stands for the space which the data $(u_0, u_1)$ belong to. In this way, by using contraction argument, we get simultaneously a unique solution to $Nu=u$ locally in time for large data and globally in time for small data. To prove the local in time existence we use that $C_1(t)$ and $C_2(t)$ tend to zero as $t$ tends to zero, whereas to prove global in time existence we use that $\max\{C_1(t), C_2(t)\}\leq C$ for all $t\in [0, \infty)$ for a suitable nonnegative constant $C$.

\medskip

\begin{proof}[Proof of Theorem \ref{local}]
		Let $r=\frac{2n}{[n-4]_+}$. For $n \neq 4$ we define the Banach space
		\[ X(T) := \bigl\{ u\in \mathcal{C}([0, T], L^2(\mathbb{R}^n)\cap L^r(\mathbb{R}^n) ) \, : \ \|u\|_{X(T)} := \sup_{t\in[0, T], q\in [2, r]}\|u(t,\cdot)\|_{L^q}<\infty \bigr\}\,, \]
		and for $n=4$ we consider
		\[ X(T) := \bigl\{ u\in \mathcal{C}([0, T], L^2(\mathbb{R}^n)\cap L^r(\mathbb{R}^n) ) \, : \ \|u\|_{X(T)}:=\sup_{t\in[0, T], q\in [2, r)}\|u(t,\cdot)\|_{L^q}<\infty \bigr\}\,. \]
		Due to the hypothesis $(u_0, u_1) \in H^2(\mathbb{R}^n)\times L^2(\mathbb{R}^n)$, it follows from Gagliardo-Nirenberg inequality (see Proposition \ref{GagliardoNirenberginequality}) that
		\[\|u^{\lin}(t,\cdot)\|_{L^q} \lesssim \|u^{\lin}(t,\cdot)\|_{\dot{H^2}}^{\theta}\|u^{\lin}(t,\cdot)\|_{L^2}^{1-\theta}\lesssim E(u^{\lin})(t)=E(u^{\lin})(0)\leq \|u_0\|_{H^2} + \|u_1\|_{L^2},\]
for all $t\geq 0$ and $q \in [2,r]$ when $n \neq 4$ and for all $q \in [2,r)$ when $n =4$.
		 Here $\theta=\frac{n}{2}\left(\frac{1}2-\frac{1}{q}\right)\in [0, 1]$.
		Therefore  $u^{\lin} \in X(T)$.
		
		In the following argument we have to exclude again the case $q = r$ when $n = 4$. Let us fix $\alpha \in \left(1, \frac{n+4}{[n-4]_{+}}\right)$. We then note that for any $q \in [2,r]$ there exists $p = p(\alpha, q) \in \left(\frac{2n}{n+4}, 2\right]$ such that
		$$
		2 \leq p\alpha \leq r \quad \text{and} \quad d_{\text{pl}}(p,q) < 1.
		$$
		So, using Minkowski integral inequality and again the estimates of Theorem \ref{Thm:1} we get
		\begin{align*}
		\|u^{\nl}(t,\cdot)\|_{L^{q}} &\leq \int_0^{t} \|K(t-s,\cdot)\ast_{(x)} |u(s,\cdot)|^{\alpha}\|_{L^{q}}\,ds \\
		&\lesssim \int_0^t (t-s)^{1-\frac{n}{2}\left( \frac{1}{p}-\frac{1}{q} \right)} \|u(s,\cdot)\|^{\alpha}_{ L^{p\alpha}}\,ds\\
		& \lesssim \|u\|_{X(T)}^\alpha \int_0^t (t-s)^{1-\frac{n}{2}\left( \frac{1}{p}-\frac{1}{q} \right)}\,ds \lesssim t^{2-\frac{n}{2}\left( \frac{1}{p}-\frac{1}{q} \right)}\|u\|^{\alpha}_{X(T)},
		\end{align*}
		for any $q \in [2,r]$. Since $p > \frac{2n}{n+4}$ we get
		$$
		2-\frac{n}{2p}+\frac{n}{2q} > 0, \quad \forall q \in [2,r].
		$$
		Therefore, $N$ maps $X(T)$ into itself and, if $T$ is sufficiently small, then the existence of a unique local solution follows by a contraction argument (for
more details, see the proof of Theorem \ref{Thm:GlobalExistence1}).
\end{proof}

In order to prove the global (in time) existence results with data $u_0 \equiv 0$ and $u_1 \in L^1$ the following lemma comes into play.
\begin{lemma}\label{Lem:Auxiliary-GlobalExistence}
	Let $\nu >-1$ and $\mu \in \mathbb{R}$. Then, it holds
	\begin{equation*}
		\int_{0}^{t}(t-s)^{\nu}\,(1+s)^{\mu}\,ds\lesssim
		\begin{cases}
			(1+t)^{\nu}, &   \mbox{ if }\,\, \mu <-1,\\
			(1+t)^{\nu}\,log(e+t), &  \mbox{ if }\,\, \mu =-1,\\
			(1+t)^{1+\nu+\mu}, &  \mbox{ if }\,\,  \mu >-1,
		\end{cases}
	\end{equation*}
	and
	\[\int_{0}^{t}(t-s)^{\nu}\,e^{-c(t-s)}(1+s)^{\mu}\,ds\lesssim (1+t)^\mu .\]
	Moreover, the estimate is also valid if $(t-s)^{\nu}$ is replaced by $(1+t-s)^{\nu}$ in the integral.
\end{lemma}
\begin{proof}[Proof of Theorem \ref{Thm:GlobalExistence1}]
	We introduce the solution space $X(T)$ by
	\begin{align*}
		X(T):=\big\{ u\in \mathcal{C}\big( [0,T],L^2(\R^n)\cap L^\infty(\R^n) \big): \|u\|_{X(T)}<\infty \big\},
	\end{align*}
	where
	\begin{align*}
		\|u\|_{X(T)}&:=
		\sup\limits_{t \in [0,T],\, q \in [2,3)} (1+t)^{\frac{n}{4}\left(1-\frac{1}{q} \right)-\frac{n}{4} \beta(1, q)} \|u\|_{L^q}
		+ (1+t)^{\frac{n}{4}\left(1-\frac{1}{3} \right)} \, (\log (1+t))^{-\frac12}\, \|u\|_{L^3} \\
		&\quad +\sup\limits_{t \in [0,T], q \in (3,\infty]} (1+t)^{\frac{n}{4}\left(1-\frac{1}{q} \right)} \|u\|_{L^q}.
	\end{align*}
	Since $u_1 \in L^1(\R^n) = \mathcal{A}$, from \eqref{eq:L^1-L^p estimates} we immediately conclude
	\[ \|u^{\lin}\|_{X(T)} = \|K(t,x)\ast u_1(x)\|_{X(T)} \leq C_0\|u_1\|_{\mathcal{A}}. \]
	%
	%
	%
	Therefore, it remains to prove
	\[ \|u^{\nl}\|_{X(T)} \lesssim \|u\|_{X(T)}^\alpha.\]
	For the sake of brevity we shall consider only the case $q \neq 3$.
	Using the estimates from \eqref{eq:L^1-L^p estimates}, we have the following:
	%
	\begin{align*}
		\|u^{\nl}(t,\cdot)\|_{L^q} &\lesssim \int_0^t\big\| K(t-\tau,x)\ast_{(x)}|u(\tau,x)|^\alpha \big\|_{L^q}d\tau \\
		& \lesssim \int_0^t(1+t-\tau)^{-\frac{n}{4}\left( 1-\frac{1}{q} \right)+\frac{n}4 \beta(1, q)}\,\||u(\tau,x)|^\alpha \|_{L^1} d\tau \\
		& \lesssim \int_0^t(1+t-\tau)^{-\frac{n}{4}\left( 1-\frac{1}{q} \right)+\frac{n}4 \beta(1, q)}\|u\|_{L^\alpha}^\alpha d\tau \\
		& \lesssim \|u\|_{X(T)}^\alpha\int_0^t(1+t-\tau)^{-\frac{n}{4}\left(1-\frac{1}{q} \right)+\frac{n}4 \beta(1, q)}(1+\tau)^{-\alpha\left( \frac{n}{4}(1-\frac{1}{\alpha})+\frac{n}4 \beta(1, \alpha) \right)}d\tau.
	\end{align*}
	We notice that for $n=1,2$ it holds
	\[ -\frac{n}{4}\Big( 1- \frac{1}{q} \Big)+\frac{n}4 \beta(1, q)>-1. \]
	Moreover, since $\alpha>1+ \frac 4n > 3$ ( for $n =1, 2$) we have $\beta(1,\alpha) = 0$ and therefore
	$$
	\alpha>1+ \frac 4n \iff - \frac{n}{4}\left(\alpha-1\right)<-1.
	$$
 	So, we can use Lemma \ref{Lem:Auxiliary-GlobalExistence} to get
	\begin{align*}
		\|u^{\nl}(t,\cdot)\|_{L^q} \lesssim (1+t)^{-\frac{n}{4}\left( 1-\frac{n}{q} \right)+\frac{n}4 \beta(1, q)}\|u\|_{X(T)}^\alpha.
	\end{align*}

	Next we turn to the proof of the Lipschitz condition \eqref{Eq:inequality-NuNv}.
	Due to the fact that
	\[ |\,|u(\tau,x)|^\alpha - |v(\tau,x)|^\alpha| \lesssim |u(s,x)-v(s,x)|\big( |u(s,x)|^{\alpha-1}+|v(s,x)|^{\alpha-1} \big), \]
	from H\"{o}lder's inequality we obtain
	\begin{align*}
		\big\| |u(s,\cdot)|^\alpha-|v(s,\cdot)|^\alpha \big\|_{L^1} &\lesssim \| u(s,\cdot)-v(s,\cdot) \|_{L^{\alpha}} \big( \|u(s,\cdot)\|_{L^{\alpha}}^{\alpha-1}+\|v(s,\cdot)\|_{L^{\alpha}}^{\alpha-1} \big).
	\end{align*}
	Thus, following the same ideas to what we did to estimate  $\|u^{\nl}(t,\cdot)\|_{L^q}$, we can obtain \eqref{Eq:inequality-NuNv}. In this way the proof of Theorem \ref{Thm:GlobalExistence1} is completed.
\end{proof}

\begin{proof}[Proof of Theorem \ref{Thm:GlobalExistence2}.]
	We introduce the solution space $X(T)$ by
	\begin{align*}
		X(T):=\big\{ u\in \mathcal{C}\big( [0,T],L^2(\R^n)\cap L^{\tilde{p}}(\R^n) \big): \|u\|_{X(T)}<\infty \big\},
	\end{align*}
	where we set $\tilde p=\infty$ for $n=3$ and $\tilde{p} < \infty$ for $n=4$, and the corresponding norm
\begin{align*}
	\|u\|_{X(T)}&:=
	\sup\limits_{t \in [0,T], q \in [2,3)} (1+t)^{\frac{n}{4}\left(1-\frac{1}{q} \right)-\frac{n}{4} \beta(1, q)} \|u\|_{L^q}
	+ (1+t)^{\frac{n}{4}\left(1-\frac{1}{3} \right)} \, (\log (1+t))^{-\frac12}\, \|u\|_{L^3} \\
	&\quad +\sup\limits_{t \in [0,T], q \in (3,\tilde{p}]} (1+t)^{\frac{n}{4}\left(1-\frac{1}{q} \right)} \|u\|_{L^q}.
\end{align*}
	On one hand we have $d(1,q)=\frac n {2q}<1$ for $n=3$ or $n=4$ with $q>2$, and on the other hand $d(2,q)= \frac n4- \frac n{2q} <1$ when $n=3$ or $n=4$ with $2 \leq q< \infty$. Hence, we get
	\begin{align*}
		\|u^{\lin}\|_{X(T)} 
		\lesssim \|u_1\|_{L^1\cap L^2}.
	\end{align*}

	Now we deal with the nonlinear part. For the sake of brevity, we consider only the case $q \neq 3$. Using $L^1\cap L^2-L^q$ estimates for $q \in (2,\tilde p]$, we get
	\begin{align*}
		\|u^{\nl}(t,\cdot)\|_{L^q} &\lesssim \int_0^t\big\| K(t-\tau,x)\ast|u(\tau,\cdot)|^\alpha \big\|_{L^q}\,d\tau \\
		& \lesssim \int_0^t(1+t-\tau)^{-\frac{n}{4}\left( 1-\frac{1}{q} \right)+\frac{n}{4}\beta(1,q)}\,\|\,|u(\tau,\cdot)|^\alpha \|_{L^1 \cap L^2} d\tau \\
		& \lesssim \int_0^t(1+t-\tau)^{-\frac{n}{4}\left( 1-\frac{1}{q} \right)+\frac{n}{4}\beta(1,q)}\big( \|u(\tau,\cdot)\|_{L^\alpha}^\alpha+ \|u(\tau,\cdot)\|_{L^{2\alpha}}^\alpha \big)d\tau.
	\end{align*}
	We note that in both cases, $q \leq 3$ or $q > 3$, we have
	\[ -\frac{n}{4}\left( 1-\frac{1}{q} \right)+\frac{n}{4}\beta(1,q)>-1 \]
	and
	\[ \|u(\tau,\cdot)\|_{L^{\alpha}}^\alpha \lesssim (1+\tau)^{-\sigma}(\log(1+\tau))^{\gamma}\|u\|_{X(T)}^\alpha, \]
	with
	\begin{equation}\label{sigma}\sigma= \begin{cases} \frac{n}{4}\left( \alpha -1 \right) & \alpha>3, \\
	n\left( \frac{\alpha}{2}-1 \right) & \alpha\leq 3.
	\end{cases}
	\end{equation}
	Moreover, we can see that $\|u(\tau,\cdot)\|_{L^{2\alpha}}^\alpha$ decays faster than $\|u(\tau,\cdot)\|_{L^\alpha}^\alpha$, since
	\[-\frac n4 \left(1-\frac 1 {2\alpha} \right) \leq -\frac n2 + \frac n\alpha\]
	if $\alpha \leq 3$ and by definition of $u\in X(T)$ for $\alpha\geq 3$. Thus, using $\alpha > 2+\frac{2}{n}$ we conclude that $\sigma>1$, with $\sigma$ given by \eqref{sigma} and Lemma \ref{Lem:Auxiliary-GlobalExistence} implies
	\begin{align*}
	\|u^{\nl}(t,\cdot)\|_{L^q} \lesssim (1+t)^{-\frac{n}{4}\left( 1-\frac{n}{q} \right)+\frac{\beta}{4}}\|u\|_{X(T)}^\alpha,
	\end{align*}
	for $q \in (2,\tilde p]$. Finally, we consider the case $q=2$. We have
	\begin{align*}
	\|u^{\nl}(t,\cdot)\|_{L^2} &\lesssim \int_0^t\big\| K(t-\tau,x)\ast|u(\tau,\cdot)|^\alpha \big\|_{L^2}\,d\tau \\
	& \lesssim \int_0^t\,\|\,|u(\tau,\cdot)|^\alpha \|_{L^2} d\tau \lesssim \int_0^t \|u(\tau,\cdot)\|_{L^{2\alpha}}^\alpha \,d\tau \lesssim \|u\|_{X(T)}^\alpha\,\int_0^t(1+\tau)^{-\frac n4 \left(1-\frac 1 {2\alpha} \right)\alpha} \,d\tau.
	\end{align*}
	Now, since $\alpha > \tilde \alpha_c = 2+\frac{2}{n}$, we get
	\[-\frac n4 \left(1-\frac 1 {2\alpha} \right)\alpha<-1\]
	and so, using Lemma \ref{Lem:Auxiliary-GlobalExistence}, we conclude
	\[\|u^{\nl}(t,\cdot)\|_{L^2}\lesssim \|u\|_{X(T)}^\alpha.\]
	Following as in the proof of Theorem \ref{Thm:GlobalExistence1}, one may conclude the Lipschitz condition \eqref{Eq:inequality-NuNv}  and this concludes the proof.
	
		
	\end{proof}


\section{Non-local existence result}
\noindent

This section is devoted to the proof of Theorem \ref{non-existence_local}. Let $\eta(t) \in \mathcal{C}^{\infty}_{c}([0,\infty))$ and $\phi(x) \in \mathcal{C}^{\infty}_{c}(\R^n)$ such that $0 \leq \eta, \phi \leq 1$ and
$$
\eta(t)=
\begin{cases}
	1, \quad 0 \leq t \leq \frac{1}{2}, \\
	0, \quad t \geq 1,
\end{cases}
\quad
\phi(x) =
\begin{cases}
	1, \quad |x| \leq \frac{1}{2}, \\
	0, \quad |x| \geq 1.
\end{cases}
$$
We remark that for any $r > 1$ there exists $C_r > 0$ such that
$$
|\partial^{2}_{t} \eta(t)| \leq C_r \eta(t)^{\frac{1}{r}}, \quad |\Delta^{2}_{x} \phi(x)| \leq C_r \phi(x)^{\frac{1}{r}}.
$$
For a parameter $\tau \in (0,1]$ we define
$$
\psi_\tau(t,x) = \eta_\tau(t) \phi_\tau(x), \quad \eta_\tau(t) = \eta\left( \frac{t}{\tau} \right) \quad \text{and} \quad \phi_{\tau}(x) = \phi\left( \frac{x}{\tau^{\frac{1}{2}}}\right).
$$

In the sequel we consider the following positive quantity:
$$
I(\tau) = \int_0^T \int_{\R^n} |u(t,x)|^{\alpha}  \psi_{\tau}(t,x) dxdt.
$$
In the following Lemma \ref{lemma_integral_inequality_test_function} we shall employ the well-known test function method to derive an integral inequality.

\begin{lemma}\label{lemma_integral_inequality_test_function}
Suppose that $u \in L^\alpha_{\text{loc}}([0, T) \times \R^n)$ is a weak solution to \eqref{Eq:MainProblem} with $u_0 \equiv 0$. Then, there exists $C = C(\alpha, n) > 0$ such that
	$$
	\left| \int_0^T \int_{\R^n} u(t,x) \{\partial^2_t + \Delta^2_x + 1\} \psi_{\tau} (t,x) dxdt \right| \leq C \tau^{-2 + \left( 1+\frac{n}{2} \right)\frac{1}{\alpha'}} I(\tau)^{\frac{1}{\alpha}},
	$$
where $\alpha'$ denotes the conjugate exponent of $\alpha$.
\end{lemma}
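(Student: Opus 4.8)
The plan is to start from the definition of weak solution applied to the concrete test function $\psi_\tau$, and then estimate the right-hand side by H\"older's inequality against the cutoff weight $\psi_\tau$, exactly as in the classical test function (Mitidieri--Pohozaev / Zhang) method. First, since $u$ is a weak solution to \eqref{Eq:MainProblem} with $u_0\equiv 0$ and $u_1=\lambda f\geq 0$, applying Definition \ref{defi_weak_sol} with $\psi=\psi_\tau\geq 0$ gives
\[
\int_0^T\!\!\int_{\R^n} u\,\{\partial_t^2+\Delta_x^2+1\}\psi_\tau\,dx\,dt
= I(\tau) + \lambda\int_{\R^n} f(x)\,\psi_\tau(0,x)\,dx \;\geq\; I(\tau),
\]
because the last integral is nonnegative. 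In particular the left-hand side is bounded below by $I(\tau)\geq 0$, so it suffices to bound its absolute value from above by $C\tau^{-2+(1+n/2)/\alpha'}I(\tau)^{1/\alpha}$; the lemma as stated then follows.

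Next I would estimate $\big|\int_0^T\!\int_{\R^n} u\,\{\partial_t^2+\Delta_x^2+1\}\psi_\tau\,dx\,dt\big|$. Split $\{\partial_t^2+\Delta_x^2+1\}\psi_\tau = (\partial_t^2\eta_\tau)\phi_\tau + \eta_\tau(\Delta_x^2\phi_\tau) + \eta_\tau\phi_\tau$. Using the scaling $\eta_\tau(t)=\eta(t/\tau)$, $\phi_\tau(x)=\phi(x/\tau^{1/2})$ one has $|\partial_t^2\eta_\tau|\leq C\tau^{-2}\eta_\tau^{1/r}$ and $|\Delta_x^2\phi_\tau|\leq C\tau^{-2}\phi_\tau^{1/r}$ for any $r>1$ (this is exactly the property recorded just before the lemma, with the derivative-absorbing factor $\tau^{-2}$ coming from the parabolic-type scaling $t\sim\tau$, $|x|\sim\tau^{1/2}$), while the zero-order term is trivially bounded by $\psi_\tau$. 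Choosing $r=\alpha'$, each of the three pieces is pointwise $\leq C\tau^{-2}\psi_\tau^{1/\alpha'}$ (the zero-order term even without the $\tau^{-2}$, which only helps since $\tau\leq 1$). Hence
\[
\left|\int_0^T\!\!\int_{\R^n} u\,\{\partial_t^2+\Delta_x^2+1\}\psi_\tau\,dx\,dt\right|
\;\leq\; C\,\tau^{-2}\int_0^T\!\!\int_{\R^n} |u|\,\psi_\tau^{1/\alpha'}\,dx\,dt.
\]
Now apply H\"older's inequality with exponents $\alpha$ and $\alpha'$, writing $|u|\,\psi_\tau^{1/\alpha'} = \big(|u|\psi_\tau^{1/\alpha}\big)\cdot\psi_\tau^{1/\alpha'-1/\alpha}\cdot\ldots$ — more cleanly, $|u|\psi_\tau^{1/\alpha'} = (|u|^\alpha\psi_\tau)^{1/\alpha}\cdot(\psi_\tau^{1/\alpha'})^{1}\cdot$ on the support of $\psi_\tau$, so
\[
\int_0^T\!\!\int_{\R^n} |u|\,\psi_\tau^{1/\alpha'}\,dx\,dt
\;\leq\; I(\tau)^{1/\alpha}\left(\int_{\{\psi_\tau>0\}} 1\,dx\,dt\right)^{1/\alpha'}.
\]
The support of $\psi_\tau$ is contained in $\{0\leq t\leq\tau\}\times\{|x|\leq\tau^{1/2}\}$, whose $(t,x)$-measure is $\lesssim \tau\cdot\tau^{n/2} = \tau^{1+n/2}$. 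Therefore the product is $\lesssim \tau^{-2}\,I(\tau)^{1/\alpha}\,\tau^{(1+n/2)/\alpha'}$, which is exactly the claimed bound $C\tau^{-2+(1+n/2)/\alpha'}I(\tau)^{1/\alpha}$.

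I do not expect a serious obstacle here: the only points requiring mild care are (i) making sure $|u|\psi_\tau^{1/\alpha'}$ is genuinely integrable so that H\"older applies — this is fine because $u\in L^\alpha_{\mathrm{loc}}([0,T)\times\R^n)$ and $\psi_\tau$ is compactly supported in $[0,T)\times\R^n$ once $\tau<T$ (we may harmlessly restrict to $\tau\in(0,1]$ with $\tau<T$, or note the statement is only used for small $\tau$) and $\psi_\tau^{1/\alpha'}$ is bounded; (ii) tracking the exponent of $\tau$ through the scaling, which gives $-2$ from the two derivative terms and $+(1+n/2)/\alpha'$ from the support volume raised to the $1/\alpha'$ power. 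The choice $r=\alpha'$ in the derivative bounds is what ties the construction together, ensuring the weight appearing after estimating the derivatives is precisely $\psi_\tau^{1/\alpha'}$, the conjugate weight needed to pair with $(|u|^\alpha\psi_\tau)^{1/\alpha}$ in H\"older's inequality.
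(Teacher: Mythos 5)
Your overall plan is the same test-function method the paper uses, and the exponent bookkeeping ($\tau^{-2}$ from the derivatives, $\tau^{(1+n/2)/\alpha'}$ from the support volume) is exactly right. But there is one genuine gap in the middle, coming from the choice $r=\alpha'$.

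The first paragraph is a detour: the lemma assumes only that $u\in L^\alpha_{\mathrm{loc}}$ is a weak solution with $u_0\equiv 0$; it does not assume $u_1=\lambda f\geq 0$, so invoking Definition \ref{defi_weak_sol} to deduce that the left-hand side is $\geq I(\tau)$ imports hypotheses from Theorem \ref{non-existence_local} that are not part of the lemma. That observation belongs to the proof of the theorem, not to the proof of the lemma, and in any case the lemma statement already asks only for an upper bound on the absolute value.

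The substantive problem is the choice $r=\alpha'$. With that choice you obtain the pointwise bound $\lvert\{\partial_t^2+\Delta_x^2+1\}\psi_\tau\rvert\leq C\tau^{-2}\psi_\tau^{1/\alpha'}$, and then you need
\[
\int_0^T\!\!\int_{\R^n}\lvert u\rvert\,\psi_\tau^{1/\alpha'}\,dx\,dt \;\leq\; I(\tau)^{1/\alpha}\,\bigl|\supp\psi_\tau\bigr|^{1/\alpha'}.
\]
But H\"older with exponents $\alpha,\alpha'$ forces the split $\lvert u\rvert\psi_\tau^{1/\alpha'}=\bigl(\lvert u\rvert\psi_\tau^{1/\alpha}\bigr)\cdot\psi_\tau^{1/\alpha'-1/\alpha}$, whose second factor has
\[
\bigl\|\psi_\tau^{1/\alpha'-1/\alpha}\bigr\|_{L^{\alpha'}}=\Bigl(\int_{\supp\psi_\tau}\psi_\tau^{(\alpha-2)/(\alpha-1)}\,dx\,dt\Bigr)^{1/\alpha'}.
\]
When $\alpha\geq 2$ the exponent $(\alpha-2)/(\alpha-1)$ is nonnegative, $\psi_\tau^{(\alpha-2)/(\alpha-1)}\leq 1$, and your estimate goes through. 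When $1<\alpha<2$ the exponent is negative and $\psi_\tau$ vanishes to infinite order at the boundary of its support (it is a $\mathcal{C}_c^\infty$ cutoff), so this integral is infinite and the step fails. The ``more cleanly'' identity $\lvert u\rvert\psi_\tau^{1/\alpha'}=(\lvert u\rvert^\alpha\psi_\tau)^{1/\alpha}\psi_\tau^{1/\alpha'}$ is also simply false: the right-hand side equals $\lvert u\rvert\psi_\tau$, not $\lvert u\rvert\psi_\tau^{1/\alpha'}$. Note that $1<\alpha<2$ is relevant: the threshold $\frac{n+2m}{n-2m}$ in Theorem \ref{non-existence_local} can be below $2$ (take $m=1$, $n\geq 7$).

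The fix is the one the paper takes: choose $r=\alpha$ in the derivative bounds. Then the first piece satisfies $\lvert\partial_t^2\eta_\tau\rvert\phi_\tau\leq C\tau^{-2}\eta_\tau^{1/\alpha}\phi_\tau$, and you factor $\lvert u\rvert\,\eta_\tau^{1/\alpha}\phi_\tau=\bigl(\lvert u\rvert\psi_\tau^{1/\alpha}\bigr)\cdot\phi_\tau^{1/\alpha'}$, whose second factor is bounded by $1$ and supported in $\{0\leq t\leq\tau\}\times\{\lvert x\rvert\leq\tau^{1/2}\}$, so H\"older gives $C\tau^{-2}I(\tau)^{1/\alpha}(\tau^{1+n/2})^{1/\alpha'}$ cleanly for every $\alpha>1$. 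The same works for the $\Delta_x^2$ and zero-order pieces. So your argument is the right one in spirit, and the exponent count is correct, but the choice of $r$ must be $\alpha$, not $\alpha'$, for the H\"older pairing to close for all $\alpha>1$.
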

\begin{proof}
	From H\"older inequality we get
	\begin{align*}
		\left| \int_0^T \int_{\R^n} u(t,x) \partial^2_t \psi_{\tau} (t,x) dxdt \right| &\leq \tau^{-2} \int_0^T \int_{\R^n} |u(t,x)| \psi_\tau(t,x)^{\frac{1}{\alpha}} \left|\eta''\left( \frac{t}{\tau} \right)\right| \eta_\tau(t)^{-\frac{1}{\alpha}} \phi_{\tau}(x)^{\frac{1}{\alpha'}}  \phi_\tau(x) dxdt \\
		&\leq C_{\alpha} \tau^{-2} I(\tau)^{\frac{1}{\alpha}} \left\{ \int_0^\tau \int_{|x| \leq \frac{\tau}{2} } dxdt \right\}^{\frac{1}{\alpha'}} \\
		&\leq C_{\alpha, n} \tau^{-2 + \left(1 + \frac{n}{2} \right)\frac{1}{\alpha'}} I(\tau)^{\frac{1}{\alpha}}.
	\end{align*}
	Analogously, we have
	$$
	\left| \int_0^T \int_{\R^n} u(t,x) \Delta^2_x \psi_{\tau} (t,x) dxdt \right| \leq C_{\alpha, n} \tau^{-2 + \left(1 + \frac{n}{2} \right)\frac{1}{\alpha'}}I(\tau)^{\frac{1}{\alpha}}.
	$$
	On the other hand, since $\tau \leq 1$ we get
	$$
	\left| \int_0^T \int_{\R^n} u(t,x) \psi_{\tau} (t,x) dxdt \right| \leq C_{\alpha, n} \tau^{\left(1 + \frac{n}{2} \right)\frac{1}{\alpha'}}I(\tau)^{\frac{1}{\alpha}}
	\leq C_{\alpha, n} \tau^{-2 + \left(1 + \frac{n}{2} \right)\frac{1}{\alpha'}}I(\tau)^{\frac{1}{\alpha}}.
	$$
	This completes the proof.
\end{proof}
Now let us consider the following function:
\begin{equation}\label{eq_counter_example_non_local_existence}
	f(x) =
	\begin{cases}
		|x|^{-k}, \quad |x| \leq 1, \\
		0, \qquad \,\,\,\,\, |x|>1,
	\end{cases}
\end{equation}
with $k < n$. Hence $f \in L^{1}(\R^n)$.

\begin{lemma}\label{lemma_inequality_initial_datum}
	Let $f=f(x)$ be defined as above. If $k < n$, then there exists $C = C(n, k) > 0$ such that
	$$
	\int_{\R^n} f(x) \phi_{\tau}(x) dx \geq C	\tau^{\frac{n}{2} - \frac{k}{2}}.
	$$
\end{lemma}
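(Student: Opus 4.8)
The plan is to compute the integral $\int_{\R^n} f(x)\phi_\tau(x)\,dx$ directly from the explicit form of $f$ and $\phi_\tau$, using only the nonnegativity of $\phi$ and the fact that $\phi \equiv 1$ on $\{|x|\leq 1/2\}$. First I would restrict the domain of integration: since $\phi_\tau(x) = \phi(x/\tau^{1/2}) = 1$ whenever $|x| \leq \tfrac12 \tau^{1/2}$ and $\phi \geq 0$ everywhere, we have
\[
\int_{\R^n} f(x)\phi_\tau(x)\,dx \;\geq\; \int_{|x|\leq \frac12\tau^{1/2}} f(x)\,dx.
\]
Because $\tau \in (0,1]$, the ball $\{|x|\leq \tfrac12\tau^{1/2}\}$ is contained in the unit ball, so on this set $f(x) = |x|^{-k}$ exactly.

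Next I would evaluate the resulting integral in polar coordinates:
\[
\int_{|x|\leq \frac12\tau^{1/2}} |x|^{-k}\,dx \;=\; \omega_{n-1}\int_0^{\frac12\tau^{1/2}} r^{-k}\,r^{n-1}\,dr \;=\; \omega_{n-1}\int_0^{\frac12\tau^{1/2}} r^{n-1-k}\,dr,
\]
where $\omega_{n-1}$ is the surface measure of the unit sphere. The hypothesis $k < n$ guarantees that $n-1-k > -1$, so this integral converges at the origin and equals
\[
\omega_{n-1}\,\frac{(\tfrac12\tau^{1/2})^{n-k}}{n-k} \;=\; \frac{\omega_{n-1}}{(n-k)\,2^{\,n-k}}\;\tau^{\frac{n-k}{2}}.
\]
Setting $C = C(n,k) := \omega_{n-1}\big/\big((n-k)2^{n-k}\big) > 0$ yields the claimed bound $\int_{\R^n} f(x)\phi_\tau(x)\,dx \geq C\,\tau^{(n-k)/2}$.

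This argument is entirely elementary; there is no real obstacle. The only point requiring the hypothesis is the convergence of $\int_0^\cdot r^{n-1-k}\,dr$ near $r=0$, which is exactly where $k<n$ enters (it ensures $f\in L^1_{\mathrm{loc}}$ near the origin, consistent with the earlier observation that $f\in L^1(\R^n)$). I would also note that one does not even need the full strength of $\phi\equiv 1$ on $\{|x|\leq 1/2\}$; any fixed lower bound $\phi \geq c_0 > 0$ on some fixed ball around the origin would suffice, at the cost of a different constant. The matching upper bound $\int f\phi_\tau \lesssim \tau^{(n-k)/2}$ (not needed here, but implicit for the optimality discussion) follows the same way by integrating over $\{|x|\leq \tau^{1/2}\}$ and using $\phi \leq 1$.
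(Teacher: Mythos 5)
Your proof is correct and rests on the same idea as the paper's: the integral of $|x|^{-k}$ over a ball of radius $\sim\tau^{1/2}$ scales like $\tau^{(n-k)/2}$, with $k<n$ guaranteeing local integrability at the origin. The only cosmetic difference is that the paper performs the change of variables $y=x/\tau^{1/2}$ over the full support of $\phi_\tau$ to obtain an exact equality $\int f\phi_\tau\,dx = \tau^{(n-k)/2}\int_{|y|\le 1}|y|^{-k}\phi(y)\,dy$, while you restrict to the smaller ball where $\phi_\tau\equiv 1$ and integrate in polar coordinates, giving the lower bound directly with an explicit constant.
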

\begin{proof}
	We have
	$$
	\int_{\R^n} f(x) \phi \left( \frac{x}{\tau^{\frac{1}{2}}} \right) dx =
	\int_{|x| \leq \tau^{\frac{1}{2}}} |x|^{-k} \phi \left( \frac{x}{\tau^{\frac{1}{2}}} \right) dx =
	\tau^{\frac{n}{2} - \frac{k}{2} }\int_{|x| \leq 1} |x|^{-k} \phi (x) dx = C \tau^{\frac{n}{2} - \frac{k}{2} }.
	$$
\end{proof}

\begin{proof}[Proof of Theorem \ref{non-existence_local}.]
	Let $f=f(x)$ given by \eqref{eq_counter_example_non_local_existence}. In order to conclude that $f \in L^1(\R^n) \cap L^m(\R^n)$ we shall assume
	$$
	k < \frac{n}{m}.
	$$
	Suppose now that there exist a weak solution $u(t,x) \in L^{\alpha}_{\text{loc}}([0,T) \times \R^n)$ to \eqref{Eq:MainProblem} with $u_0 \equiv 0$ and $u_1(x) = \lambda f(x)$ for some $\lambda \geq 0$. Combining Lemma \ref{lemma_integral_inequality_test_function} with Lemma \ref{lemma_inequality_initial_datum} we get
	$$
	C\lambda \tau^{\frac{n}{2} - \frac{k}{2} } + I(\tau) \leq C \tau^{-2 + \left( 1+\frac{n}{2} \right)\frac{1}{\alpha'}}I(\tau)^{\frac{1}{\alpha}}.
	$$
	Young's inequality for products allow us to conclude
	$$
	\lambda \leq C \tau^{-2\alpha' + 1 +\frac{k}{2}}.
	$$
	Therefore, if
	$$
	-2\alpha' + 1 +\frac{k}{2} > 0 \iff k > 2 \frac{\alpha+1}{\alpha -1},
	$$
	then we conclude that $\lambda = 0$ letting $\tau \to 0^{+}$.
To have a non-empty range for $k$ we then need $n>2m$ and
	$$
		\frac{n}{2m} > \frac{\alpha+1}{\alpha -1} \iff \alpha > \frac{n+2m}{n-2m}.
	$$
\end{proof}

\section*{Appendix}
\addcontentsline{toc}{section}{Appendix}
\renewcommand{\thesection}{A}
\section{Auxiliary results}
\noindent

To handle with $L^p-L^q$ estimates we state the important Riesz-Thorin interpolation theorem.
\begin{theorem} [Riesz-Thorin interpolation theorem, see Theorem 24.5.1 in \cite{EbertReissigBook}] \label{Riesz-Thorin}
Let $1\leq p_0,p_1,q_0,q_1\leq\infty$. If $T$ is a linear continuous operator in the space $\mathcal{L}\big( L^{p_0}\mapsto L^{q_0} \big)\cap\mathcal{L}\big( L^{p_1}\mapsto L^{q_1} \big)$, then
\[ T\in\mathcal{L}\big( L^{p_\theta}\mapsto L^{q_\theta} \big) \]
for any $\theta\in(0,1)$, where
\[ \frac{1}{p_\theta}=\frac{1-\theta}{p_0}+\frac{\theta}{p_1} \quad \text{and} \quad \frac{1}{q_\theta}=\frac{1-\theta}{q_0}+\frac{\theta}{q_1}. \]
Moreover, the following norm estimates hold:
\[ \|T\|_{\mathcal{L}(L^{p_\theta}\mapsto L^{q_\theta})} \leq \|T\|_{\mathcal{L}(L^{p_0}\mapsto L^{q_0})}^{1-\theta}\|T\|_{\mathcal{L}(L^{p_1}\mapsto L^{q_1})}^\theta. \]
\end{theorem}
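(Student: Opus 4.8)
The plan is to prove the theorem by the classical complex-interpolation argument of Thorin, whose engine is the Hadamard three-lines lemma. Write $M_j:=\|T\|_{\mathcal{L}(L^{p_j}\to L^{q_j})}$ for $j=0,1$; the target is the bound $\|T\|_{\mathcal{L}(L^{p_\theta}\to L^{q_\theta})}\le M_0^{1-\theta}M_1^{\theta}$, which in particular contains the asserted membership $T\in\mathcal{L}(L^{p_\theta}\to L^{q_\theta})$. By density of simple functions with finite-measure support in $L^{p_\theta}$ (when $p_\theta<\infty$; the degenerate case $p_\theta=\infty$, forcing $p_0=p_1=\infty$, is handled by a direct variant of the same argument) together with the duality $\|h\|_{L^{q_\theta}}=\sup\{|\int h\,g\,dx|:\ g\ \text{simple},\ \|g\|_{L^{q_\theta'}}\le 1\}$, it suffices to establish
\[ \Big|\int_{\mathbb{R}^n}(Tf)(x)\,g(x)\,dx\Big|\le M_0^{1-\theta}M_1^{\theta} \]
for every simple $f$ with $\|f\|_{L^{p_\theta}}\le 1$ and every simple $g$ with $\|g\|_{L^{q_\theta'}}\le 1$.

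First I would build the analytic family. Writing $f=\sum_j c_j\mathbf{1}_{A_j}$ and $g=\sum_k d_k\mathbf{1}_{B_k}$ with $c_j,d_k\neq 0$ and the $A_j$ (resp.\ $B_k$) pairwise disjoint of finite measure, introduce on the closed strip $S=\{z\in\mathbb{C}:0\le\operatorname{Re}z\le 1\}$ the exponents $\frac1{p(z)}=\frac{1-z}{p_0}+\frac{z}{p_1}$ and $\frac1{q'(z)}=\frac{1-z}{q_0'}+\frac{z}{q_1'}$, so that $p(\theta)=p_\theta$ and $q'(\theta)=q_\theta'$. Define
\[ f_z=\sum_j |c_j|^{\,p_\theta/p(z)}\,\tfrac{c_j}{|c_j|}\,\mathbf{1}_{A_j},\qquad g_z=\sum_k |d_k|^{\,q_\theta'/q'(z)}\,\tfrac{d_k}{|d_k|}\,\mathbf{1}_{B_k}, \]
with $f_\theta=f$ and $g_\theta=g$. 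Then $F(z):=\int (Tf_z)\,g_z\,dx$, after expansion, is a finite sum of the terms $|c_j|^{\,p_\theta/p(z)}\,|d_k|^{\,q_\theta'/q'(z)}$ multiplied by the fixed constants $\int (T\mathbf{1}_{A_j})\,\mathbf{1}_{B_k}\,dx$ and unimodular factors; hence $F$ is entire and bounded on $S$.

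Next come the boundary estimates. On the line $\operatorname{Re}z=0$ one has $|f_{iy}|=|f|^{\,p_\theta/p_0}$ pointwise, so $\|f_{iy}\|_{L^{p_0}}=\|f\|_{L^{p_\theta}}^{\,p_\theta/p_0}\le 1$, and likewise $\|g_{iy}\|_{L^{q_0'}}\le 1$; therefore $|F(iy)|\le\|Tf_{iy}\|_{L^{q_0}}\|g_{iy}\|_{L^{q_0'}}\le M_0$. In the same way $|F(1+iy)|\le M_1$ on $\operatorname{Re}z=1$. Applying the three-lines lemma to $F$ on $S$ yields $|F(\theta)|\le M_0^{1-\theta}M_1^{\theta}$, which is exactly the desired bilinear inequality; the norm estimate, and with it the theorem, follows.

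The main obstacle is the bookkeeping around the degenerate exponents rather than any conceptual difficulty: when some $p_j$ or $q_j$ equals $\infty$ one must set the duality pairing and the simple-function reduction on the appropriate side (dualizing against $L^1$ when $q_\theta=\infty$, treating $p_\theta=\infty$ by a direct limiting argument), and one must verify that $z\mapsto|c_j|^{\,p_\theta/p(z)}$ stays bounded and continuous on all of $S$, including where $1/p(z)=0$, so that $F$ is genuinely bounded up to the boundary. Once these points are dispatched, the construction of $f_z,g_z$ and the three-lines lemma deliver the result.
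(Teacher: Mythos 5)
The paper does not prove this theorem: it is quoted verbatim as a known auxiliary result (Theorem 24.5.1 of the cited textbook), so there is no proof of its own to compare against. Your proposal is the classical Thorin complex-interpolation argument — reduction to a bilinear estimate on simple functions via duality, construction of the analytic families $f_z,g_z$ along the affine exponents $1/p(z),1/q'(z)$, boundary estimates on $\operatorname{Re}z=0,1$, and the Hadamard three-lines lemma — and it is correct; this is in fact the standard proof found in textbooks, including the cited reference. Your awareness of the endpoint bookkeeping (degenerate $p_j=\infty$ or $q_j=\infty$, and boundedness of $F$ on the closed strip) covers the only places where care is genuinely needed.
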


A key result for multipliers in $M_p$ with $p\in(1,\infty)$ is the Mikhlin-H\"{o}rmander multiplier theorem.
\begin{theorem} [Mikhlin-H\"{o}rmander multiplier theorem, see Theorem 2.5 in \cite{Hormander_1960_estimates}] \label{Sec:App_Thm_Mikhlin-Hormander}
Let $1<p<\infty$ and $k=[n/2]+1$. Assume that $m\in\mathcal{C}^k(\mathbb{R}^n\slash{0})$ and
\[ |\partial_\xi^\gamma m(\xi)| \leq C|\xi|^{-|\gamma|}, \qquad |\gamma|\leq k. \]
Then, $m\in M_p^p$.
\end{theorem}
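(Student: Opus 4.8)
The plan is to reduce the statement to the weak‑$(1,1)$ bound for the convolution operator $T_m f := \mathcal{F}^{-1}(m\,\mathcal{F}f)$ and then to verify H\"ormander's kernel condition via a dyadic decomposition of $m$. First, taking $\gamma=0$ in the hypothesis gives $m\in L^\infty$, so Plancherel's theorem already shows $T_m$ is bounded on $L^2$. If we can show in addition that $T_m$ is of weak type $(1,1)$, then the Marcinkiewicz interpolation theorem yields boundedness on $L^p$ for $1<p\le 2$; and since the formal adjoint of $T_m$ is $T_{\overline m}$ with $\overline m$ satisfying exactly the same derivative bounds as $m$, duality gives boundedness on $L^p$ for $2\le p<\infty$ as well. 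Hence everything reduces to the weak‑$(1,1)$ estimate, i.e. $m\in M_1^{1,\infty}$.

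By the Calder\'on--Zygmund theorem, the weak‑$(1,1)$ bound for the $L^2$‑bounded operator $T_m$ follows once the associated kernel $K:=\mathcal{F}^{-1}m$ — a tempered distribution which coincides with a locally integrable function on $\mathbb{R}^n\setminus\{0\}$ — satisfies H\"ormander's integral condition
\[ \sup_{y\neq 0}\int_{|x|\ge 2|y|}\big|K(x-y)-K(x)\big|\,dx<\infty. \]
(By a standard approximation argument one may first assume $m$ is smooth and compactly supported in $\mathbb{R}^n\setminus\{0\}$, so that $K$ is a Schwartz function, as long as all the estimates below are uniform in this extra assumption; I will flag this.) Fix a Littlewood--Paley partition of unity $\{\psi_j\}_{j\in\mathbb{Z}}$ with $\psi_j(\xi)=\psi(2^{-j}\xi)$ supported in $\{2^{-1}\le|\xi|\le 2\}$, and set $m_j:=m\,\psi_j$, $K_j:=\mathcal{F}^{-1}m_j$. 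Since $|\partial^\gamma\psi_j|\lesssim 2^{-j|\gamma|}$, the Leibniz rule together with the hypothesis gives $|\partial^\gamma m_j(\xi)|\lesssim 2^{-j|\gamma|}$ for $|\gamma|\le k$, with $\supp m_j\subset\{|\xi|\sim 2^j\}$.

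The heart of the argument is a rescaled kernel estimate. Writing $K_j(x)=2^{jn}G_j(2^jx)$ with $G_j:=\mathcal{F}^{-1}\big(m_j(2^j\cdot)\big)$, the function $m_j(2^j\cdot)$ is supported in $\{|\xi|\sim 1\}$ with all derivatives up to order $k$ bounded uniformly in $j$. Hence $\|G_j\|_{L^2}\lesssim 1$ and, by Plancherel, $\|x^\gamma G_j(x)\|_{L^2}\lesssim 1$ for every $|\gamma|\le k$. Splitting $\int|G_j|$ over $\{|z|\le 1\}$ (Cauchy--Schwarz against $\|G_j\|_{L^2}$) and over $\{|z|>R\}$ using the weight $|z|^k$ and $|z|^{2k}=\sum_{|\gamma|=k}\binom{k}{\gamma}z^{2\gamma}$, and observing that $2k>n$ — this is precisely the role of $k=[n/2]+1$, producing a strictly positive gain $\varepsilon:=k-\tfrac n2>0$ — we obtain $\int_{\mathbb{R}^n}|G_j|\lesssim 1$ and $\int_{|z|>R}|G_j(z)|\,dz\lesssim R^{-\varepsilon}$ for $R\ge 1$. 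The same computation applied to $\nabla G_j$ (Fourier transform $i\xi\,m_j(2^j\cdot)$, which obeys the same bounds) gives $\int_{\mathbb{R}^n}|\nabla G_j|\lesssim 1$. Undoing the scaling yields $\|K_j\|_{L^1}\lesssim 1$, $\|\nabla K_j\|_{L^1}\lesssim 2^j$, and $\int_{|x|>\rho}|K_j(x)|\,dx\lesssim(2^j\rho)^{-\varepsilon}$ whenever $2^j\rho\ge 1$.

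Finally I assemble H\"ormander's condition. For fixed $y\neq 0$ and each $j$, bound $\int_{|x|\ge 2|y|}|K_j(x-y)-K_j(x)|\,dx$ in two ways: by the mean value estimate $\le|y|\,\|\nabla K_j\|_{L^1}\lesssim 2^j|y|$, and by the tail estimate $\le 2\int_{|x|>|y|}|K_j(x)|\,dx\lesssim(2^j|y|)^{-\varepsilon}$ (valid when $2^j|y|\ge 1$). Using the first for $2^j|y|\le 1$ and the second for $2^j|y|>1$, both resulting series are geometric, so
\[ \int_{|x|\ge 2|y|}\big|K(x-y)-K(x)\big|\,dx\le\sum_{2^j|y|\le 1}2^j|y|+\sum_{2^j|y|>1}(2^j|y|)^{-\varepsilon}\lesssim 1, \]
a bound independent of $y$. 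This gives H\"ormander's condition, hence the weak‑$(1,1)$ bound, and the theorem follows. I expect the main obstacle to be the rescaled dyadic kernel estimates together with their summation in the last display: the hypothesis only controls $k=[n/2]+1$ derivatives, and it is exactly this choice that supplies the positive exponent $\varepsilon$ making the series converge, so getting that gain with the correct power of $2^j|y|$ is the delicate point; the preliminary reduction making $K$ a bona fide function is routine but must be stated.
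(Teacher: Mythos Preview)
The paper does not supply a proof of this theorem; it is listed among the auxiliary results in the appendix with a direct citation to H\"ormander's original paper, so there is nothing to compare against. Your argument is the standard Calder\'on--Zygmund route (Plancherel for $L^2$, dyadic decomposition to verify H\"ormander's kernel condition, weak-$(1,1)$, Marcinkiewicz, duality) and is correct; in particular your use of $k=[n/2]+1$ to produce the positive gain $\varepsilon=k-n/2$ in the tail estimate, and the splitting of $\sum_j$ according to $2^j|y|\lessgtr 1$, are exactly the right mechanisms.
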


The classical integer version of the Gagliardo–Nirenberg inequality can be stated as follows.
\begin{prop}[Classical Gagliardo-Nirenberg inequality, see Proposition 25.5.4 in \cite{EbertReissigBook}]  \label{GagliardoNirenberginequality}
Let $j,m\in\mathbb{N}$ with $j<m$, and let $u\in \mathcal{C}^{m}_c(\mathbb{R}^n)$, i.e. $u\in \mathcal{C}^m$ with compact support. Let $\theta\in [j/m,1]$, and let $p,q,r$ in $[1,\infty]$ be such that
\[ j-\frac{n}{q} = \Big(m-\frac{n}{r}\Big)\theta-\frac{n}{p}(1-\theta). \]
Then,
\begin{align} \label{Eq:Gagliardo-Nirenberg-App}
\|D^ju\|_{L^q} \leq C_{n,m,j,p,r,\theta} \|D^m u\|_{L^r}^\theta\ \|u\|_{L^p}^{1-\theta}
\end{align}
provided that $\big(m-\frac{n}{r}\big)-j \not\in\mathbb{N}$, that is $\frac{n}{r}>m-j$ or $\frac{n}{r}\not\in\mathbb{N}$.

If $ \big(m-\frac{n}{r}\big)-j \in\mathbb{N}$, then \eqref{Eq:Gagliardo-Nirenberg-App} holds provided that $\theta\in [\frac{j}{m},1)$.
\end{prop}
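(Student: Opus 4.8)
The plan is to prove the inequality by reducing it, via induction on the order of differentiation, to two first‑order facts, and then to fill in the interpolation parameter $\theta$ by ordinary Hölder interpolation of $L^q$‑norms. First I would record the (non‑proving but orienting) observation that the estimate is scale invariant precisely when the stated relation $j-\tfrac nq=\bigl(m-\tfrac nr\bigr)\theta-\tfrac np(1-\theta)$ holds: replacing $u$ by $u(\lambda\,\cdot)$ multiplies the left‑hand side by $\lambda^{\,j-n/q}$ and the right‑hand side by $\lambda^{(m-n/r)\theta-(n/p)(1-\theta)}$, so letting $\lambda\to0$ and $\lambda\to\infty$ shows the relation is forced; it will govern all the exponent bookkeeping below. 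Because $u\in\mathcal{C}^m_c(\R^n)$, every integration by parts used is legitimate and no density argument is needed.

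The first building block is the Gagliardo--Nirenberg--Sobolev inequality. For $v\in\mathcal{C}^1_c$ one writes $|v(x)|\le\int_\R|\partial_i v(x)|\,dx_i$ for each $i=1,\dots,n$, multiplies the $n$ resulting bounds, and integrates successively in $x_1,\dots,x_n$ using the generalized Hölder inequality, obtaining $\|v\|_{L^{n/(n-1)}}\le\prod_{i=1}^n\|\partial_i v\|_{L^1}^{1/n}\le\|\nabla v\|_{L^1}$; applying this to $|v|^{\gamma}$ with $\gamma=\tfrac{p(n-1)}{n-p}$ and using Hölder gives $\|v\|_{L^{p^{*}}}\lesssim\|\nabla v\|_{L^p}$ for $1\le p<n$, $\tfrac1{p^{*}}=\tfrac1p-\tfrac1n$ (the case $p=1$ being the displayed line itself, and the case where a derivative is so regular that $\tfrac nr>m-j$ being handled by the elementary $\|v\|_{L^\infty}\lesssim\|v\|_{W^{1,s}}$, $s>n$, via the fundamental theorem of calculus along lines). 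The second building block is a \emph{balanced} first‑order interpolation inequality: from $\int|\partial_i v|^{s}=-(s-1)\int|\partial_i v|^{s-2}\,\partial_i^2 v\,v$ and Hölder with exponents $\tfrac{s}{s-2},b,a$ satisfying $\tfrac1b+\tfrac1a=\tfrac2s$ one gets $\|\partial_i v\|_{L^s}^{2}\lesssim\|\partial_i^2 v\|_{L^b}\|v\|_{L^a}$, hence $\|\nabla v\|_{L^s}\lesssim\|D^2 v\|_{L^b}^{1/2}\|v\|_{L^a}^{1/2}$ after summing in $i$.

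Combining these with Hölder interpolation of Lebesgue norms yields first the complete first‑order family $\|\nabla v\|_{L^q}\lesssim\|D^2 v\|_{L^r}^{\theta}\|v\|_{L^p}^{1-\theta}$, $\theta\in[\tfrac12,1]$ (the endpoint $\theta=1$ being the Sobolev embedding $W^{1,r}\hookrightarrow L^{q}$, $\tfrac1q=\tfrac1r-\tfrac1n$, applied to $\nabla v$, the endpoint $\theta=\tfrac12$ the balanced inequality, the rest by $L^q$‑interpolation). Iterating this in the order of differentiation—applying the first‑order inequalities successively to $v,Dv,\dots,D^{m-1}v$ and chaining the exponents—produces the higher‑order Sobolev embedding $\|D^j v\|_{L^{q_1}}\lesssim\|D^m v\|_{L^r}$, $\tfrac1{q_1}=\tfrac1r-\tfrac{m-j}{n}$ (when positive), and the higher‑order balanced estimate $\|D^j v\|_{L^{q_{*}}}\lesssim\|D^m v\|_{L^r}^{j/m}\|v\|_{L^p}^{1-j/m}$. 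A final Hölder interpolation between these two, in the target exponent $q$, delivers the estimate for every $\theta\in[\tfrac jm,1]$, and the scaling relation recorded above is exactly what matches the interpolation parameter with the $\theta$ and $q$ of the statement.

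I expect the main obstacle to be organizational rather than analytic: arranging the induction so that every intermediate Lebesgue exponent generated along the chain is admissible (positive reciprocal, and not the forbidden critical value). The one genuine subtlety—and the source of the arithmetic side conditions—is that the Sobolev step only applies strictly below the critical exponent, so the iteration must never pass through the borderline embedding $W^{1,n}\hookrightarrow L^\infty$, which \emph{fails}; this is precisely what is encoded in $\bigl(m-\tfrac nr\bigr)-j\notin\mathbb N$ (equivalently $\tfrac nr>m-j$ or $\tfrac nr\notin\mathbb N$), and when it is violated only $\theta\in[\tfrac jm,1)$ survives, as in the statement. The endpoint cases $p,r,q\in\{1,\infty\}$ require the corresponding variants of the two building blocks ($p=1$ Gagliardo--Nirenberg--Sobolev, $L^\infty$ bounds via the fundamental theorem of calculus in place of Hölder) but no new idea. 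For $1<p,q,r<\infty$ a shorter route avoiding the induction is to write $v=c_{n,m}\,|D|^{-m}\bigl(|D|^m v\bigr)$ and invoke Hardy--Littlewood--Sobolev together with $L^q$‑interpolation, at the cost of treating the $\{1,\infty\}$ endpoints separately.
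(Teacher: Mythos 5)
Your proposal cannot be checked against a proof in the paper, because the paper does not prove this proposition: it is quoted verbatim as Proposition 25.5.4 of \cite{EbertReissigBook} and used as a black box in the proof of Theorem \ref{local}.

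That said, your outline is the correct classical (Nirenberg) strategy: you correctly isolate the two first-order building blocks (the $L^1\to L^{n/(n-1)}$ Gagliardo--Nirenberg--Sobolev estimate and the integration-by-parts ``balanced'' estimate), Lyapunov log-convexity of $L^q$ norms as the interpolation mechanism, the scaling heuristic as the exponent bookkeeping, and the failing borderline $W^{1,n}\hookrightarrow L^\infty$ as the source of the side condition $(m-\tfrac nr)-j\notin\mathbb N$. The genuine gap is where you write that iterating ``and chaining the exponents'' produces the higher-order balanced estimate $\|D^j v\|_{L^{q_*}}\lesssim\|D^m v\|_{L^r}^{j/m}\|v\|_{L^p}^{1-j/m}$. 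Applying your first-order balanced inequality to $D^k v$ for $k=0,\dots,m-2$ yields relations of the form $\|D^{k+1}v\|_{L^{s_k}}^2\lesssim\|D^{k+2}v\|_{L^{b_k}}\|D^k v\|_{L^{a_k}}$ in which \emph{both} neighbouring derivative orders appear on the right; these do not telescope. Eliminating the intermediate norms $\|D^k v\|$ ($0<k<m$, $k\neq j$) so that only $\|D^m v\|_{L^r}$ and $\|v\|_{L^p}$ survive, with precisely the exponents $j/m$ and $1-j/m$, requires a non-obvious multiplicative manipulation (in Nirenberg's original argument this is a separate combinatorial lemma, using Young's inequality for products and an absorption step, after which one must still check that every intermediate exponent dictated by the scaling relation lies in $[1,\infty]$). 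Your sketch asserts the output of that step without supplying the mechanism, and that is the non-routine core of the theorem; the rest of what you describe --- the two endpoint inequalities, the $L^q$ interpolation, the iterated Sobolev chain, and the arithmetic exclusion at $\theta=1$ --- is correctly identified and genuinely is organizational.
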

To handle with $L^1-L^\infty$ estimate, we have the following lemma.
\begin{theorem}[Bernstein's Theorem, see Theorem 1.2 in \cite{Sjostrand}] \label{Theorem:Bernstein}
Let $n\geq1$ and $N>n/2$. We assume that $m\in H^N$, then $\mathcal{F}^{-1}m\in L^1$ and there exists a constant $C$ such that
\[ \|\mathcal{F}^{-1}m\|_{L^1} \leq C\|m\|_{L^2}^{1-\frac{n}{2N}}\|D^Nm\|_{L^2}^{\frac{n}{2N}}. \]
\end{theorem}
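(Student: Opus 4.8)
The plan is the classical scaling-plus-Cauchy--Schwarz argument. Write $g:=\mathcal{F}^{-1}m$, and take $N$ to be the positive integer for which the statement is used (as in Lemma \ref{LemmaLargeFreq}). By Plancherel's identity one has $\mathcal{F}(x^\alpha g)=c_\alpha\,\partial_\xi^\alpha m$ up to unimodular constants, so that
\[
\|g\|_{L^2}\simeq\|m\|_{L^2},\qquad \big\||x|^N g\big\|_{L^2}\simeq\|D^N m\|_{L^2}
\]
(for the second relation use $|x|^{2N}=(\sum_j x_j^2)^N$ to write $|x|^{2N}$ as a finite combination of the monomials $x^{2\alpha}$ with $|\alpha|=N$). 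In particular, the hypothesis $m\in H^N$ says precisely that both $g$ and $|x|^N g$ belong to $L^2(\R^n)$.

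For a free scale parameter $R>0$ I would split, then apply Cauchy--Schwarz,
\[
\|g\|_{L^1}=\int_{\R^n}\big(1+R^2|x|^2\big)^{-N/2}\cdot\big(1+R^2|x|^2\big)^{N/2}|g(x)|\,dx
\le \big\|(1+R^2|x|^2)^{-N/2}\big\|_{L^2}\,\big\|(1+R^2|x|^2)^{N/2}g\big\|_{L^2}.
\]
The first factor is finite exactly because $2N>n$, and the change of variables $y=Rx$ gives $\big\|(1+R^2|x|^2)^{-N/2}\big\|_{L^2}=c_{n,N}^{1/2}R^{-n/2}$ with $c_{n,N}:=\int_{\R^n}(1+|y|^2)^{-N}\,dy<\infty$. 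For the second factor I use $\big(1+R^2|x|^2\big)^{N}\lesssim 1+R^{2N}|x|^{2N}$ together with the two $L^2$ bounds above to get $\big\|(1+R^2|x|^2)^{N/2}g\big\|_{L^2}\lesssim \|m\|_{L^2}+R^{N}\|D^N m\|_{L^2}$. Hence, for every $R>0$,
\[
\|\mathcal{F}^{-1}m\|_{L^1}\lesssim R^{-n/2}\|m\|_{L^2}+R^{\,N-n/2}\|D^N m\|_{L^2},
\]
which in particular already shows $\mathcal{F}^{-1}m\in L^1$, so the first assertion of the theorem requires no separate argument.

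It then remains to optimize in $R$. Choosing $R=\big(\|m\|_{L^2}/\|D^N m\|_{L^2}\big)^{1/N}$ balances the two terms and yields
\[
\|\mathcal{F}^{-1}m\|_{L^1}\le C\,\|m\|_{L^2}^{1-\frac{n}{2N}}\,\|D^N m\|_{L^2}^{\frac{n}{2N}},
\]
as claimed. The degenerate cases are immediate: if $\|D^N m\|_{L^2}=0$ then every derivative of $m$ of order $N$ vanishes, so $m$ is a polynomial, and being in $L^2$ it is identically zero; likewise $\|m\|_{L^2}=0$ forces $m\equiv 0$. I do not expect a genuine obstacle here — the computation is essentially routine — the only points needing a little care are the Plancherel equivalence $\||x|^N g\|_{L^2}\simeq\|D^N m\|_{L^2}$ and the admissibility of the optimizing $R$; should one wish to allow a non-integer exponent, one reads $D^N$ as the Fourier multiplier $|\xi|^N$ and replaces $|x|^N g$ by the corresponding object, after which the same argument applies verbatim.
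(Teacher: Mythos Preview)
Your argument is correct and is the standard scaling--Cauchy--Schwarz proof of Bernstein's inequality. Note, however, that the paper does not prove this statement at all: it is listed among the auxiliary results in the appendix and simply attributed to Sj\"ostrand (Theorem~1.2 in \cite{Sjostrand}), so there is no proof in the paper to compare against. Your proposal therefore supplies a self-contained proof where the paper gives only a citation.
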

The estimates provided by Theorem \ref{Theorem:Bernstein} will be used together with the estimates for $\|\mathcal{F}^{-1}m\|_{L^\infty}$, provided by the following application of Littman's lemma, based on stationary phase methods.
\begin{lemma} [A Littman type lemma, Lemma 2.1 in \cite{Pecher}] \label{LittmanLemma}
Let us consider for $\tau\geq \tau_0$, $\tau_0$ is a large positive number, the oscillatory integral
\[ \mathcal{F}^{-1}_{\xi\rightarrow x}\big( e^{-i\tau\omega(\xi)}v(\xi) \big). \]
The amplitude function $v=v(\xi)$ is supposed to belong to $\mathcal{C}_c^{\infty}(\mathbb{R}_\xi^n)$ with support $\{\xi\in\mathbb{R}^n: |\xi|\in [1/2,2]\}$. The function $\omega=\omega(\xi)$ is $\mathcal{C}^\infty$ in a neighborhood of the support of $v$. Moreover, the Hessian $H_\omega(\xi)$ is non-singular, that is, $\det H_\omega(\xi)\neq0$, on the support of $v$. Then, the following $L^\infty-L^\infty$ estimate holds:
\[ \big\| \mathcal{F}^{-1}_{\xi\rightarrow x}\big( e^{-i\tau\omega(\xi)}v(\xi) \big) \big\|_{L^\infty(\mathbb{R}_\xi^n)}\leq C(1+\tau)^{-\frac{n}{2}}\sum_{|\alpha|\leq n+1}\big\| D_\xi^\alpha v(\xi) \big\|_{L^\infty(\mathbb{R}_\xi^n)}, \]
where the constant $C$ depends on $\mathcal{C}^{n+2}$ norm of the phase function $\omega$, on the lower bound for $|\det H_\omega(\xi)|$ and on the diameter of the support of $v$.
\end{lemma}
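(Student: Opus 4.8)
\textbf{Proof strategy for Lemma \ref{LittmanLemma}.} This is the classical method of stationary phase, and the plan is to reduce it, after a rescaling, to a bounded number of Gaussian (Fresnel) integrals localized near the critical points of the phase, plus rapidly decaying non-stationary contributions, with all constants kept uniform in an auxiliary parameter. First I would set $x=\tau y$ and write
\[ \mathcal{F}^{-1}_{\xi\to x}\big(e^{-i\tau\omega(\xi)}v(\xi)\big)=(2\pi)^{-n}\int_{\mathbb{R}^n}e^{i\tau\Phi_y(\xi)}v(\xi)\,d\xi=:I(\tau,y),\qquad \Phi_y(\xi):=y\cdot\xi-\omega(\xi). \]
Since $\supp v$ is compact, the crude estimate $|I(\tau,y)|\le (2\pi)^{-n}|\supp v|\,\|v\|_{L^\infty}$ already gives the claim for $0\le\tau\le\tau_0$, so I may assume $\tau\ge\tau_0$ large and only need $\sup_y|I(\tau,y)|\le C\tau^{-n/2}\sum_{|\alpha|\le n+1}\|D^\alpha v\|_{L^\infty}$. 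Put $M:=\sup_{\supp v}|\nabla\omega|<\infty$. For $|y|\ge 2M$ one has $|\nabla\Phi_y|=|y-\nabla\omega|\ge |y|/2\ge M$ on $\supp v$, so integrating by parts $N$ times with $L=(i\tau)^{-1}|\nabla\Phi_y|^{-2}\nabla\Phi_y\cdot\nabla$ — whose coefficients involve only the bounded quantities $|\nabla\Phi_y|^{-1}$ and derivatives of $\omega$ (independent of $y$) — yields $|I(\tau,y)|\le C_N\tau^{-N}\sum_{|\alpha|\le N}\|D^\alpha v\|_{L^\infty}$ uniformly in $y$; taking $N=\lceil n/2\rceil\le n+1$ is more than enough there.

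For the remaining region $|y|\le 2M$, the critical points of $\Phi_y$ in $\supp v$ solve $\nabla\omega(\xi)=y$. Because $\det H_\omega\ne 0$ on a neighbourhood of $\supp v$, the map $\nabla\omega$ is a local diffeomorphism there; covering $\supp v$ by finitely many balls on each of which $\nabla\omega$ is injective shows that for every such $y$ the number of critical points is bounded by a constant $L$ independent of $y$. I would then use a partition of unity to split $v=\sum_i\chi_i v+\chi_\ast v$, with $\chi_i$ localizing near the $i$-th critical point and $\chi_\ast$ supported where $|\nabla\Phi_y|\ge\delta>0$; by compactness of $\{(\xi,y):\xi\in\supp v,\ |y|\le 2M\}$ the localization radii and $\delta$ may be chosen uniformly in $y$. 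The term $\chi_\ast v$ is handled exactly as in the far region, giving $O(\tau^{-N})$.

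Near a critical point $\xi_i=\xi_i(y)$ the Hessian $H_{\Phi_y}(\xi_i)=-H_\omega(\xi_i)$ is non-degenerate, so by the Morse lemma there is a smooth change of variables $\zeta\mapsto\xi$ putting $\Phi_y(\xi)=\Phi_y(\xi_i)+\tfrac12\langle A_i\zeta,\zeta\rangle$ with $A_i$ symmetric and congruent to $-H_\omega(\xi_i)$, hence $|\det A_i|=|\det H_\omega(\xi_i)|$ is bounded below by the given lower bound. In these coordinates the $i$-th piece of $I$ equals $e^{i\tau\Phi_y(\xi_i)}\int e^{i\frac{\tau}{2}\langle A_i\zeta,\zeta\rangle}w_i(\zeta)\,d\zeta$ with $w_i\in\mathcal{C}_c^\infty$ assembled from $v$, $\chi_i$ and the Jacobian; expanding $w_i$ by Fourier inversion and using the Fresnel integral $\int e^{i\frac{\tau}{2}\langle A_i\zeta,\zeta\rangle+i\zeta\cdot\eta}\,d\zeta=(2\pi)^{n/2}|\det(\tau A_i)|^{-1/2}e^{i\frac{\pi}{4}\mathrm{sgn}\,A_i}e^{-\frac{i}{2\tau}\langle A_i^{-1}\eta,\eta\rangle}$ produces the factor $\tau^{-n/2}|\det A_i|^{-1/2}$ multiplied by an integral of modulus $\le\|\widehat{w_i}\|_{L^1}\lesssim\sum_{|\alpha|\le n+1}\|D^\alpha w_i\|_{L^2}\lesssim\sum_{|\alpha|\le n+1}\|D^\alpha v\|_{L^\infty}$, where compact support and the chain rule give the last two steps (with constants depending only on the $\mathcal{C}^{n+2}$-norm of $\omega$ and on $\mathrm{diam}(\supp v)$). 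Summing the at most $L$ stationary contributions with the non-stationary remainders, and combining with the trivial bound for $\tau\le\tau_0$, gives $\sup_y|I(\tau,y)|\le C(1+\tau)^{-n/2}\sum_{|\alpha|\le n+1}\|D^\alpha v\|_{L^\infty}$.

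The main obstacle is not any single computation but the \emph{uniformity in $y$}: one must ensure that the number of critical points, the localization radii, the lower bound for $|\nabla\Phi_y|$ off the critical set, and the constants in the Morse normal form are all independent of $y$. This is precisely where the two hypotheses enter — compactness of $\supp v$, and non-degeneracy of $H_\omega$ on $\supp v$, the latter making $\nabla\omega$ a local diffeomorphism and thereby supplying the uniform finite cover by injectivity balls. Everything else is the textbook stationary-phase estimate.
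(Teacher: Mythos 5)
The paper does not prove Lemma \ref{LittmanLemma}; it cites Lemma~2.1 of \cite{Pecher} for the statement and points to \cite[Prop.~2.5, Ch.~8]{SS} for a ``simple proof'', stressing only that the latter argument visibly carries over uniformly in a parameter. Your blind attempt is therefore a self-contained proof rather than a reconstruction of the paper's one. The overall plan --- rescale $x=\tau y$, split into a non-stationary region $|y|\ge 2M$ treated by repeated integration by parts and a compact region $|y|\le 2M$ where $\nabla\omega(\xi)=y$ may have solutions, bound the number of critical points via the local-diffeomorphism property of $\nabla\omega$, localise with a partition of unity, reduce to a pure quadratic phase and use the Fresnel integral together with $\|\widehat{w_i}\|_{L^1}\lesssim\sum_{|\alpha|\le s}\|D^\alpha w_i\|_{L^2}$ for $s>n/2$ --- is the textbook stationary-phase argument and does yield $\tau^{-n/2}$ with control on the size of the support and on $\inf|\det H_\omega|$, and you correctly identify the uniformity in $y$ as the crux.

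Two points deserve a caveat. First, you reduce to a quadratic phase via the Morse lemma, whereas the ``simple'' proof of \cite{SS} the paper has in mind works directly with the Taylor expansion $\Phi_y(\xi)=\Phi_y(\xi_0)+\tfrac12\langle H_{\Phi_y}(\xi_0)(\xi-\xi_0),\xi-\xi_0\rangle+R(\xi)$ and absorbs the cubic remainder $e^{i\tau R}$ into the amplitude (or, equivalently, decomposes dyadically around the critical point). The Morse lemma costs two derivatives: to have the change of variables, and hence $w_i$, in $\mathcal C^{s}$, you need $\omega\in\mathcal C^{s+3}$, which is \emph{more} than the $\mathcal C^{n+2}$ dependence asserted in the conclusion. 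Since in this paper $\omega$ is $\mathcal C^\infty$ and only finitely many derivatives are uniformly bounded in the parameter, this discrepancy is harmless for the applications, but a remainder-based proof is what recovers the stated derivative count exactly. Second, the phrase ``$A_i$ congruent to $-H_\omega(\xi_i)$, hence $|\det A_i|=|\det H_\omega(\xi_i)|$'' is imprecise: congruence preserves neither the determinant nor its absolute value (it multiplies $\det$ by the square of the Jacobian at the critical point). The bookkeeping does come out right --- the Jacobian factor from $w_i(0)$ and $|\det A_i|^{-1/2}$ combine to $|\det H_\omega(\xi_i)|^{-1/2}$ --- but you should either normalise the Morse coordinates so that the Jacobian at $\xi_i$ is the identity (so $A_i=-H_\omega(\xi_i)$ literally) or carry the Jacobian explicitly. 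Finally, the uniformity-in-$y$ of the Morse change of variables (uniform localisation radius, uniform $\mathcal C^k$ bounds for $\zeta(\xi)$) is asserted by compactness but not proved; this is exactly the genuine technical content of a parameter-dependent stationary phase lemma and deserves a line or two, e.g.\ by invoking the implicit function theorem with explicit bounds on the neighbourhood of validity in terms of $\inf|\det H_\omega|$ and $\|\omega\|_{\mathcal C^{k}}$.
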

In \cite[Proposition 2.5, Chapter 8]{SS}, one can find a simple proof of Lemma \ref{LittmanLemma}, from which it is easy to check that the statement remains valid whenever~$\omega$ and~$v$ depend on some parameter, with a constant $C$, uniform with respect to the parameter, provided that the derivatives of $\omega$ and $v$, as well as $|\det H_\omega(t,\xi)|$ can be estimated uniformly with respect to the parameter.

\addcontentsline{toc}{section}{Appendix}
\renewcommand{\thesection}{B}

\section{Proof of Lemma \ref{Lemma_optimality}}\label{optimality}

\noindent

This Appendix is devoted to discuss the proof of Lemma \ref{Lemma_optimality}. Before to proceed, we need to recall some facts about Bessel functions. As usual, for $\nu \geq -\frac{1}{2}$ we denote by $J_{\nu}$ the Bessel function
$$
J_\nu(z) = \left( \frac{z}{2} \right)^{\nu} \sum_{k = 0}^{\infty} (-1)^{k} \frac{\left( \frac{1}{4} z^2 \right)^{k}}{ k! \Gamma(\nu+k+1)}.
$$
The Bessel functions satisfy the following asymptotic behaviour:
\begin{equation}\label{eq_assymptotic_bessel}
J_\nu (z)=c_\nu z^{-\frac12} \cos(z-\nu \pi/2-\pi/4)+ O(z^{-\frac 32}), \quad \text { as } \, z \to \infty.
\end{equation}
When $f \in L^1(\R^n)$ is radial, its Fourier transform can be computed in the following way:
$$
\widehat{f}(\xi) = \int_{0}^{\infty} f(r) r^{n-1} (r|\xi|)^{\frac{2-n}{2}} J_{\frac{n-2}{2}}(r|\xi|)dr.
$$
We also need the following result concerning oscillatory integrals (see Lemma $4$ in \cite{MarshallStrauss1980}).

\begin{lemma}\label{lemma_asymptotics_oscillatory_integrals}
	Let $h: [\beta, \gamma] \to \R$ be a concave or convex function and we consider
	$$
	G = \int_{\beta}^{\gamma} e^{ih(y)} dy.
	$$
	If $|h'(y)| \geq \lambda$ on $[\beta, \gamma]$, then there exists $C > 0$ independent of $\lambda$ such that
	$$
	|G| \leq C \lambda^{-1}.
	$$
\end{lemma}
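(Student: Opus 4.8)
The plan is to prove Lemma \ref{lemma_asymptotics_oscillatory_integrals} by the integration-by-parts argument that underlies van der Corput's first-derivative estimate. First I would reduce to the case of a positive derivative: since $h$ is convex or concave, its derivative $h'$ is monotone on $[\beta,\gamma]$, and because $|h'|\geq\lambda>0$ it is continuous and nowhere zero, hence of one fixed sign; replacing $h$ by $-h$ if necessary --- which conjugates $G$ and leaves $|G|$ unchanged --- I may assume $h'(y)\geq\lambda$ throughout $[\beta,\gamma]$.

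Next I would set $g:=1/h'$, noting that $g$ is positive, bounded by $1/\lambda$, and monotone (nonincreasing when $h$ is convex, nondecreasing when $h$ is concave), so $g$ has bounded variation with $\operatorname{Var}_{[\beta,\gamma]}(g)=|g(\gamma)-g(\beta)|\leq 1/\lambda$. The key step is to write $e^{ih(y)}=g(y)\cdot\frac1i\frac{d}{dy}e^{ih(y)}$ and integrate by parts in the Riemann--Stieltjes sense (legitimate because $g$ has bounded variation and $e^{ih}$ is continuous), obtaining
\[ G=\frac1i\left(g(\gamma)e^{ih(\gamma)}-g(\beta)e^{ih(\beta)}-\int_\beta^\gamma e^{ih(y)}\,dg(y)\right). \]
Then I would bound the boundary terms by $g(\gamma)+g(\beta)$ and the Stieltjes integral by $\int_\beta^\gamma|dg|=\operatorname{Var}_{[\beta,\gamma]}(g)$, which together give $|G|\leq g(\gamma)+g(\beta)+|g(\gamma)-g(\beta)|=2\max\{g(\beta),g(\gamma)\}\leq 2/\lambda$. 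This yields the claim with $C=2$, independent of $\lambda$ (indeed of everything).

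The only delicate point --- essentially the sole obstacle --- is that a convex or concave function need not be twice differentiable, so one cannot a priori write $dg=g'\,dy$; I would handle this either by working with Riemann--Stieltjes integrals throughout, as above, or equivalently by approximating $h$ uniformly by smooth convex (resp. concave) functions and passing to the limit. If one is willing to assume $h\in\mathcal{C}^2$ --- which holds in every application of this lemma in the present paper --- the argument shortens: then $dg(y)=-h''(y)\,h'(y)^{-2}\,dy$ with $h''$ of constant sign, so $\int_\beta^\gamma|h''(y)|\,h'(y)^{-2}\,dy=\bigl|\int_\beta^\gamma h''(y)\,h'(y)^{-2}\,dy\bigr|=|g(\gamma)-g(\beta)|\leq 1/\lambda$, and the same bound follows.
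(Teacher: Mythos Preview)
Your argument is correct and is precisely the classical van der Corput first-derivative estimate: reduce to $h'\geq\lambda$, write $e^{ih}=\tfrac1{ih'}\,\tfrac{d}{dy}e^{ih}$, integrate by parts, and exploit the monotonicity of $1/h'$ to bound the total variation by $1/\lambda$, yielding $|G|\leq 2/\lambda$. The care you take with the Riemann--Stieltjes formulation (to avoid assuming $h\in\mathcal{C}^2$) is appropriate given that the hypothesis is only convexity/concavity, though --- as you note --- the $\mathcal{C}^2$ case suffices for every application in this paper.

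There is nothing to compare against: the paper does not supply its own proof of this lemma but simply quotes it as Lemma~4 of \cite{MarshallStrauss1980}. Your write-up is the standard proof and would serve perfectly well in its place.
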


\begin{proof}[Proof of Lemma \ref{Lemma_optimality}]
    We recall that
    $$
    \widehat{K}(t,\xi) = \frac{\sin(t\sqrt{1+|\xi|^4})}{\sqrt{1+|\xi|^4}} = \frac{\sin(t \langle |\xi|^2 \rangle )}{\langle |\xi|^2 \rangle}, \quad \langle \xi \rangle := \sqrt{1+|\xi|^2}.
    $$
 	Next, we fix $f \in \mathcal{S}(\R^n)$ such that $\widehat f  \in \mathcal{C}_c^\infty(\R^n)$, $\widehat{f}(\xi)$ is radial and $\widehat{f}$ vanishes near the origin.
        From the representation of the Fourier transform in terms of Bessel functions we have
        $$
        K(t,x) \ast f(x) = |x|^{1-n/2}  \int_0^\infty \frac{\sin(t \langle r^2 \rangle)}{\langle r^2 \rangle} \widehat{f}(r) \,J_{(n-2)/2}(|x|r) \,r^{\frac{n}{2} }\, dr.
        $$

       	Let us fix $a, b$ such that $0 < a < b$. We shall restrict our analysis in the zone $x \in \R^n$ such that
      	$$
      	ta < |x| < tb.
      	$$
      	Hence, we have $|x|/t \approx 1$. From \eqref{eq_assymptotic_bessel} we get
       	\begin{align*}
       	K(t,x) \ast f(x) &= c_n |x|^{\frac{1}{2}-\frac{n}{2}} \int_0^\infty \frac{\sin(t \langle r^2 \rangle)}{\langle r^2 \rangle} \widehat{f}(r) \cos\left(|x|r - \frac{n-1}{4}\pi\right) r^{\frac{n}{2}-\frac{1}{2}} dr \\
       		 & \quad + |x|^{1-\frac{n}{2}} \int_0^\infty \frac{\sin(t \langle r^2 \rangle)}{\langle r^2 \rangle} \widehat{f}(r) H(|x|r) r^{\frac{n}{2}} dr,
       	\end{align*}
       where $H(|x|r) \leq C (|x|r)^{-\frac{3}{2}}$ for $|x|r$ large. Since $\widehat{f}$ has compact support away from zero, we obtain
       \begin{align*}
       	 K(t,x) \ast f(x) &= c_n |x|^{\frac{1}{2}-\frac{n}{2}} \int_0^\infty \frac{\sin(t \langle r^2 \rangle)}{\langle r^2 \rangle} \widehat{f}(r) \cos\left(|x|r - \frac{n-1}{4}\pi\right) r^{\frac{n}{2}-\frac{1}{2}} dr + G(|x|)|x|^{1-\frac{n}{2}},
       \end{align*}
       where $G(|x|)|x|^{1-\frac{n}{2}} \leq C |x|^{-\frac{1}{2}-\frac{n}{2}}$ for all large $|x|$.

      Since we are interested in $at \leq |x| \leq bt$, we conclude that $G(|x|)|x|^{1-\frac{n}{2}} \lesssim t^{- \frac{1}{2}-\frac{n}{2}}$. So, the estimate \eqref{eq_estimate_from_below} must come from
      $$
      \tilde{I} = |x|^{\frac{1}{2}-\frac{n}{2}} \int_0^\infty \frac{\sin(t \langle r^2 \rangle)}{\langle r^2 \rangle} \widehat{f}(r) \cos\left(|x|r - \frac{n-1}{4}\pi\right) r^{\frac{n}{2}-\frac{1}{2}} dr.
      $$
      Using the formula
      $$
      \sin(a+b) + \sin(a-b) = 2 \sin(a) \cos(b),
      $$
      we write
      $$
      \sin(t \langle r^2 \rangle) \cos\left(|x|r - \frac{n-1}{4}\pi\right) = \frac{1}{2} \sin \left( t \langle r^2 \rangle + |x|r - \frac{n-1}{4} \pi \right)
      + \frac{1}{2} \sin \left( t \langle r^2 \rangle - |x|r + \frac{n-1}{4} \pi \right).
      $$
      Hence, we have
      $$
      \tilde{I} = \tilde{c}_n |x|^{\frac{1}{2}-\frac{n}{2}} \{ I_{+} + I_{-}\},
      $$
      where
      $$
      I_{\pm} = \int_{0}^{\infty} \sin\left( t\langle r^2 \rangle \pm \left(|x|r - \frac{n-1}{4}\pi \right) \right) \frac{\widehat{f}(r) r^{\frac{n}{2} -\frac{1}{2}}}{\langle r^2 \rangle} dr.
      $$
      Setting $g(r) = \frac{\widehat{f}(r) r^{\frac{n}{2} -\frac{1}{2}}}{\langle r^2 \rangle}$, we have
      $$
      I_{\pm}(t) = \int_{0}^{\infty} (2i)^{-1}\{ e^{ih_{\pm}(t,r)} - e^{-ih_{\pm}(t,r) }\} g(r) dr,
      $$
      where
      $$
      h_{\pm}(t,r) = t\langle r^2 \rangle \pm \{ |x|r - \frac{n-1}{4}\pi \}.
      $$

      Observe that
      $$
      \partial_r h_{\pm}(t,r) = \frac{2tr^3}{\langle r^2 \rangle} \pm |x|, \quad \partial^2_r h_{\pm}(t,r) = \frac{2tr^2}{\langle r^2 \rangle^{3}} \{3+r^4\}.
      $$
      Since $g$ has compact support away from zero and $at <|x| < bt$, we have
      $$
      \partial^2_r h_{\pm} > 0, \quad \partial_r h_{+}(t,r) \gtrsim t.
      $$
      Therefore, Lemma \ref{lemma_asymptotics_oscillatory_integrals} implies
      $$
      |I_+| \lesssim t^{-1} \implies |x|^{\frac{1}{2}-\frac{n}{2}} |I_+| \lesssim t^{-\frac{1}{2} - \frac{n}{2}}.
      $$
  	  Hence, we must get \eqref{eq_estimate_from_below} from $|x|^{\frac{1}{2}-\frac{n}{2}}I_{-}$. Let us write
  	  $$
  	  h_{-}(t,r) = t h(r), \quad h(r) := \langle r^2 \rangle - \frac{|x|}{t}r + \frac{(n-1)\pi}{4t}.
  	  $$
  	  Since $\partial^{2}_r h(r) > 0$ we obtain that $\partial_r h(\cdot)$ has a single zero $r_0$ which satisfies the equation
  	  \begin{equation}\label{eq_stationary_point}
  	  \frac{2r_0^3}{\langle r_0^2 \rangle} = \frac{|x|}{t}.
  	  \end{equation}
  	  In particular, we have
  	  \begin{equation}\label{eq_lower_bound_r_0}
  	  \frac{|x|}{t} = \frac{2r_0^3}{\langle r_0^2 \rangle}  \leq 2r_0 \implies r_0 \geq \frac{a}{2}.
  	  \end{equation}
  	  On the other hand, (using $r_0 \geq \frac{a}{2}$)
  	  \begin{equation}\label{eq_upper_bound_r_0}
  	  \frac{|x|}{t} = \frac{2r_0^3}{\langle r_0^2 \rangle} \geq 2r_0 \frac{r^2_0}{1+r_0^2} \geq 2r_0 \frac{ \frac{a^2}{4} }{ 1+\frac{a^2}{4} } \implies r_0 \leq \frac{b}{2} \frac{4+a^2}{a^2}.
  	  \end{equation}
  	  That is, since $\frac{|x|}{t} \approx 1$, we conclude that $r_0$ is bounded away from zero and infinity.
  	  In this way, we can choose $f$ such that $r_0 \in \supp\widehat{f}$ and we can write $h'(r)=(r-r_0)h_0(r)$, with $h_0(r_0)\neq 0$. Now we split our analysis in two cases: $|r-r_0| \leq t^{-\frac{1}{3}}$ or $|r-r_0| \geq t^{-\frac{1}{3}}$ ($t > 1$).
  	
	  If $ |r-r_0| \geq t^{-\frac{1}{3}}$ then $|th'(r)| \geq c t^{\frac{2}{3}}$. 
  	  From Lemma \ref{lemma_asymptotics_oscillatory_integrals} we conclude
  	  $$
  	  |x|^{\frac{1}{2}-\frac{n}{2}} \left| \int_{|r-r_0| \geq t^{-\frac{1}{3}}} \sin(th(r)) g(r) dr \right| \leq C t^{\left(\frac{1}{2}-\frac{2}{3}\right) - \frac{n}{2}}.
  	  $$
  	  Therefore, to conclude the proof it suffices to prove that
  	  $$
  	  \int_{|r-r_0| \leq t^{-\frac{1}{3}}} \sin(th(r)) g(r) dr \gtrsim t^{-\frac{1}{2}}.
  	  $$
  	
  	  We write Taylor's expansion up to the order three to $h(r)$
  	  $$
  	  T_3(r) = h(r_0) + \frac{h''(r_0)}{2}(r-r_0)^{2} + \frac{h^{(3)}(r_0) }{6}(r-r_0)^{3} = h(r_0) + \alpha (r-r_0)^2 + \beta (r-r_0)^3,
  	  $$
  	  where $\alpha > 0$. Thus, we have
  	  \begin{align*}
  	  	\int_{|r-r_0| \leq t^{-\frac{1}{3}}} \sin(th(r)) g(r) dr &= \int_{|r-r_0| \leq t^{-\frac{1}{3}}} \big( \sin(th(r)) - \sin(tT_3(r)) \big)g(r) dr + \int_{|r-r_0|\leq t^{-\frac{1}{3}}} \sin(T_3(r))g(r)dr \\
  	  	&= \int_{|r-r_0|\leq t^{-\frac{1}{3}}} \sin(T_3(r))g(r)dr + O(t^{-\frac{2}{3}}).
  	  \end{align*}
  	  Indeed, using the Lagrange remainder for some $\overline{r} \in (r_0 - t^{-\frac{1}{3}}, r_0 + t^{-\frac{1}{3}})$ we get
  	  $$
  	  \big| \sin(th(r)) - \sin(tT_3(r)) \big| \leq t|h(r) - T_3(r)| \lesssim t \frac{|h^{(4)}(\overline{r})|}{4!}(r-r_0)^{4} \lesssim t |r-r_0|^{4}.
  	  $$
  	  Hence, we have
  	  $$
  	  \left| \int_{|r-r_0|\leq t^{-\frac{1}{3}}}  \big( \sin(th(r)) - \sin(tT_3(r)) \big) g(r) dr\right| \lesssim t\int_{|r-r_0|\leq t^{-\frac{1}{3}}} (r-r_0)^{4} dr \lesssim t^{-\frac{2}{3}}.
  	  $$
  	  Applying a similar idea to the function $g(r)$, we conclude that it suffices to prove
  	  $$
  	  g(r_0)\int_{|r-r_0|\leq t^{-\frac{1}{3}}} \sin(T_3(r))dr = g(r_0)\int_{-t^{-\frac{1}{3}}}^{t^{-\frac{1}{3}}} \sin\big( th(r_0) + t\alpha s^2 + t \beta s^3 \big)ds \gtrsim t^{-\frac{1}{2}}.
  	  $$
  	
  	  Since $\alpha > 0$ we obtain that the function
  	  $
  	  F(s) = s\sqrt{\alpha+\beta s}
  	  $
  	  is well-defined for small values of $s$. Moreover, again assuming $s$ small, we have
  	  $$
  	  F'(s) = \sqrt{\alpha+\beta s} - \frac{s\beta}{2\sqrt{\alpha+\beta s}} \gtrsim \sqrt{\alpha} > 0.
  	  $$
  	  So, $F$ defines a diffeomorphism near the origin: $F : (-\delta, \delta) \to (F(-\delta), F(\delta))$. Changing the variables we get
	  \begin{align*}
	  \int_{-t^{-\frac{1}{3}}}^{t^{-\frac{1}{3}}} \sin\big( th(r_0) + t\alpha s^2 + t \beta s^3 \big) ds &= \int_{F(-t^{-\frac{1}{3}})}^{F(t^{-\frac{1}{3}})} \sin\big( th(r_0) + ts^2 \big) |(F^{-1})'(s)| ds \\ &\geq \int_{-d t^{-\frac{1}{3}}}^{dt^{-\frac{1}{3}}} \sin\big( th(r_0) + ts^2 \big) |(F^{-1})'(s)| ds,
	  \end{align*}
	  where $d = (2^{-1}\alpha)^{\frac{1}{2}}$. Applying Taylor's formula to $|(F^{-1})'(s)|$ we see that it is enough to prove that
	  \begin{align}\label{eq_}
	  \int_{-d t^{-\frac{1}{3}}}^{dt^{-\frac{1}{3}}} \sin\big( th(r_0) + ts^2 \big) ds \gtrsim t^{-\frac{1}{2}}.
	  \end{align}
	
	  Using Lemma \ref{lemma_asymptotics_oscillatory_integrals} once more, the integral \eqref{eq_} may be considered over the whole real line with an error of order $O(t^{-\frac{2}{3}})$. Now we note that
	  \begin{align*}
	  \int_{-\infty}^{\infty}  \sin\big( th(r_0) + ts^2 \big) ds &= t^{-\frac{1}{2}} \int_{-\infty}^{\infty}  \sin\big( th(r_0) + s^2 \big) ds \\
	  &= t^{-\frac{1}{2}} (2i)^{-1} \int_{-\infty}^{\infty} e^{i\left(th(r_0)+s^2\right)} - e^{-i\left( th(r_0)+s^2 \right)} ds  \\
	  &= t^{-\frac{1}{2}} \sqrt{\frac{\pi}{2}} (2i)^{-1} \big\{ e^{ith(r_0)}(1+i)  - e^{-ith(r_0)} (1-i) \big\}  \\
	  &= t^{-\frac{1}{2}} \sqrt{\frac{\pi}{2}} \big\{ \cos(th(r_0)) + \sin(th(r_0)) \big\} \\
	  &= t^{-\frac{1}{2}} \sqrt{\pi} \sin\left(th(r_0) + \frac{\pi}{4}\right).
	  \end{align*}
	  Moreover
	  $$
	  th(r_0) + \frac{\pi}{4} = t\left( \langle r_0^2 \rangle - \frac{|x|}{t}r_0 + \frac{(n-1)\pi}{4t} \right) + \frac{\pi}{4}.
	  $$
	 Since $r_0$ satisfies \eqref{eq_stationary_point}, \eqref{eq_lower_bound_r_0} and \eqref{eq_upper_bound_r_0},
we have
\[  \langle r_0^2 \rangle - \frac{|x|}{t}r_0 =\frac{|x|}{t}r_0\left( 2r_0^2 \frac{t^2}{|x|^2}-1\right)\geq ar_0\left( \frac{a^2}{2b^2}-1\right).
\]
	  $$
	  \langle r_0^2 \rangle - \frac{|x|}{t}r_0 = \langle r^2_0 \rangle \left( 1 - \frac{|x|^2}{t^2} \frac{1}{2r^2_0} \right),
	  $$
	  and
	  $$
	  1- \frac{2b^2}{a^2} \leq  1 - \frac{|x|^2}{t^2} \frac{1}{2r^2_0} \leq 1 - \frac{2a^2}{b^2} \frac{a^4}{(4+a^2)^2}.
	  $$
	  Hence, choosing $0 < a < b$ such that $b \leq \frac{{\sqrt 2}a^3}{4+a^2}$ we have that $\langle r_0^2 \rangle - \frac{|x|}{t}r_0$ is bounded away from zero for all $x$ and $t$ such that $at \leq |x| \leq tb$. Hence, it is possible to take a sequence $1 \leq t_k \to \infty$ such that
	  $$
	  \sin\left(t_kh(r_0) + \frac{\pi}{4}\right) \geq \frac{1}{2}, \quad k \in \N.
	  $$
	  Therefore, we get
	  $$
	  \int_{-\infty}^{\infty}  \sin\big( t_kh(r_0) + t_ks^2 \big) ds \gtrsim t_k^{-\frac{1}{2}}.
	  $$
	  This concludes the proof.
    \end{proof}

\section*{Acknowledgments}
Alexandre A. Junior has been supported by Grant $2022/01712-3$ and Halit S. Aslan has been supported by Grant $2021/01743-3$ from  S\~ao Paulo Research Foundation (FAPESP). 
Marcelo R. Ebert  is partially supported by "Conselho Nacional de Desenvolvimento Cient\'ifico e
Tecnol\'ogico (CNPq)",  grant number 304408/2020-4.


\begin{thebibliography}{00}
\bibitem{Brenner=1975}
P. Brenner, On $L^p-L^q$ estimates for the wave-equation.
\textit{Math. Z.} \textbf{145} (1975), no. 3, 251--254.
\bibitem{Cazenave1985}
T. Cazenave, Uniform estimates for solutions of nonlinear Klein-Gordon equations.
\emph{J. Funct. Anal.} \textbf{60} (1985), 36--55.
\bibitem{CD09} R.C. Char\~ao, C.R. da Luz, Asymptotic properties for a semilinear plate equation in unbounded domains. \emph{J. Hyperbolic Differ. Equ.} \textbf{6} (2009), 269--294.
\bibitem{CDI13p} R.C. Char\~ao, C.R. da Luz, R. Ikehata, New decay rates for a problem of plate dynamics with fractional damping. \emph{J. Hyperbolic Differ. Equ.} \textbf{10}, 3 (2013), 563--575.
\bibitem{CHI16} R.C. Char\~ao, J.L. Horbach, R. Ikehata, Optimal decay rates and asymptotic profile for the plate equation with structural damping. \emph{J. Math. Anal. Appl.} \textbf{440} (2016), 529--560.
\bibitem{ciar} P.G. Ciarlet, A justification of the von K\'arm\'an equations. \emph{Arch. Rational Mech. Anal.} \textbf{73} (1980), 349--389.
\bibitem{CZ11} E. Cordero, D. Zucco, Strichartz estimates for the vibrating plate equation. \emph{J. Evol. Equ.} \textbf{11} (2011), 827--845.
\bibitem{CZ11b} E. Cordero, D. Zucco, The Cauchy problem for the vibrating plate equation in modulation spaces. \emph{J. Pseudo-Differ. Oper. Appl.} \textbf{2} (2011), 343--354.
\bibitem{DAE2017} M. D’Abbicco, M.R. Ebert. A new phenomenon in the critical exponent for structurally damped semi-linear evolution equations. \emph{Nonlinear Anal.} \textbf{149} (2017) 1--40.
\bibitem{DabbicoEbert2021JMAA} M. D'Abbicco, M.R. Ebert, $L^p-L^q$ estimates for a parameter-dependent multiplier with oscillatory and diffusive components. \textit{J. Math. Anal. Appl.} \textbf{504} (2021).
\bibitem{DAE22} M. D'Abbicco, M.R. Ebert, The critical exponent for semilinear-evolution equations with a strong non-effective damping. \emph{Nonlinear Anal.} \textbf{215} (2022).
\bibitem{EbertReissigBook}
M.R. Ebert, M. Reissig,
Methods for partial differential equations. Qualitative properties of solutions, phase space analysis, semilinear models. Birkh\"auser/Springer, Cham, 2018.
\bibitem{Ebert19} M.R. Ebert, L.M. Louren\c co, The critical exponent for evolution models with power non-linearity, in: Trends in Mathematics, New Tools for Nonlinear PDEs and Applications, Birkh\"auser Basel, 2019, pp. 153--177.
\bibitem{Ebert20} M.R. Ebert, C.R. da Luz, M.F. Palma, The influence of data regularity in the critical exponent for a class of semilinear evolution equations. \emph{Nonlinear Differ. Equ. Appl.} \textbf{27}, 44 (2020).
\bibitem{Guo2008} Z. Guo, L. Peng, B. Wang, Decay estimates for a class of wave equations. \emph{J. Funct. Anal.} \textbf{254} (2008), 1642--1660.
\bibitem{HebeyPausader} E. Hebey, B. Pausader,  An introduction to fourth order nonlinear wave equations.\\
http://hebey.u-cergy.fr/HebPausSurvey.pdf
\bibitem{Hormander_1960_estimates}
L. H\"{o}rmander, Estimates for translation invariant operators in $L^p$ spaces. \emph{Acta Math.} \textbf{104} (1960), 93--140.
\bibitem{ikeda2016remark}
M. Ikeda, T. Inui, A remark on non-existence results for the semi-linear damped Klein-Gordon equations. \emph{RIMS K\^{o}ky\^{u}roku Bessatsu}
\textbf{B56} (2016), 11--30.
\bibitem{Karch} G. Karch, Selfsimilar profiles in large time asymptotics of solutions to damped wave equations. \emph{Studia Math.} \textbf{143} (2000)
175--197.
\bibitem{KT}  M. Keel, T. Tao, Small data blow-up for semi-linear Klein-Gordon equations. \emph{Amer. J. Math.} \textbf{121} (1997), 629--669.
\bibitem{lasi2} I. Lasiecka, Uniform stability of a full von K\'arm\'an system with nonlinear boundary feedback. \emph{SIAM J. Control Optim.} \textbf{36} (1998), 1376--1422.
\bibitem{Levandosky1998}
S. Levandosky, Decay estimates for the fourth-order wave equations. \textit{J. Differential Equations} \textbf{143} (1998), 360--413.
\bibitem{MarshallStrauss1980}
B. Marshall, W. Strauss, S. Wainger, $L^p-L^q$ estimates for the Klein-Gordon equation. \textit{J. Math. Pures Appl.} \textbf{59} (1980) 417--440.
\bibitem{Miyachi1981}
A. Miyachi, On some singular Fourier multipliers. \textit{J. Fac. Sci. Univ. Tokyo Sect. IA Math.} \textbf{28} (1981), no. 2, 267--315.
\bibitem{Pecher}
H. Pecher, $L^p$-Abschätzungen und klassische L\"{o}sungen f\"{u}r nichtlineare Wellengleichungen, I.
\emph{Math. Z.} \textbf{150} (1976), 159--183.
\bibitem{Peral1980} J.C. Peral, $L^p$ estimates for the wave equation.
\textit{J. Functional Analysis} \textbf{36} (1980), no. 1, 114--145.	
\bibitem{Reissig15}
D.T. Pham, M. Kainane, M. Reissig, Global existence for semi-linear structurally damped $\sigma-$evolution models. \emph{J. Math. Anal. Appl.} \textbf{431} (2015) 569--596.
\bibitem{puel}
J.P. Puel, M. Tucsnak, Global existence for full von K\'arm\'an system. \emph{Appl. Math. Optim.} \textbf{34} (1996), 139--160.
\bibitem{Sjostrand}
S. Sj\"{o}strand, On the Riesz means of the solution of the Shr\"{o}dinger equation.
\emph{Ann. Sc. Norm. Super. Pisa, Cl. Sci.} (3) \textbf{24}(2) (1970), 331--348.
\bibitem{SS}
E.M. Stein, R. Shakarchi, Functional Analysis: Introduction to further topics in analysis, vol. 4, Princeton University Press, 2011.
\bibitem{Strichartz1970}
R.S. Strichartz, Convolutions with kernels having singularities on a sphere.
\emph{Trans. Amer. Math. Soc.} \textbf{148} (1970), 461--471.
\bibitem{SK10} Y. Sugitani, S. Kawashima, \emph{Decay estimates of solutions to a semi-linear dissipative plate equation}. \emph{J. Hyperbolic Differ. Equ.} \textbf{7} (2010) 471--501.
\end{thebibliography}
\end{document}